\newtheorem{teo}{Theorem}[section]
\newtheorem{lem}[teo]{Lemma}
\newtheorem{prop}[teo]{Proposition}
\newtheorem{es}[teo]{Example}
\newtheorem{counterex}[teo]{Counterexample}
\theoremstyle{remark}
\newtheorem{oss}[teo]{Remark}
\theoremstyle{definition}
\newtheorem{defi}[teo]{Definition}%[section]
\newcommand{\Lap}{\mathcal{L}}
\newcommand{\dist}{\mathrm{dist}}
\newcommand{\bbR}{{\mathbb{R}}}
\newcommand{\bbN}{{\mathbb{N}}}
\newcommand{\bbQ}{{\mathbb{Q}}}
\newcommand{ \Rn} {{\mathbb {R}^n}}
\newcommand{ \Rm} {{\mathbb {R}^m}}
\newcommand{\eps}{\varepsilon}
\newcommand{\vf}{\varphi}
\newcommand{\average}{{\mathchoice {\kern1ex\vcenter{\hrule height.4pt
width 6pt
depth0pt} \kern-9.7pt} {\kern1ex\vcenter{\hrule height.4pt width 4.3pt
depth0pt}
\kern-7pt} {} {} }}
\newcommand{\F}{\mathcal{F}}
\newcommand{\mA}{\mathcal{A}}
\newcommand{\AO}{\mathcal{A}_0}
\newcommand{\ci}{\mathbf{C}}
\newcommand{\Lef}{f_e}
\newcommand{\Lefh}{f_{h,e}}
\newcommand{\Czero} {{\hbox{\bf C}}_c }
\newcommand{\Co}[1]{{\bf C}^{#1}}
\renewcommand{\L}[1]{{L}^{#1}}
\newcommand{\W}[2]{{W}_X^{#1,#2} }
\newcommand{\HS}[2]{{H}_X^{#1,#2} }
\newcommand{\Lip}{{\hbox{\rm Lip}}}
\newcommand{\Om}{\Omega}
\newcommand{\ep}{\epsilon}
\newcommand{\norma}[1]{\Vert#1\Vert}
\newcommand{\Norma}[2]{{\Vert#1\Vert}_{#2}}
\newcommand{\scalare}[2]{\langle #1,#2 \rangle}
\newcommand{\vettore}[2]{(#1_1,\dots,#1_{#2})}
\def\cleardoublepage{\clearpage\if@twoside \ifodd\c@page\else
\hbox{}
\thispagestyle{empty}
\newpage
\if@twocolumn\hbox{}\newpage\fi\fi\fi}
\title{$\Gamma$-convergence  for functionals depending on vector fields. I. Integral representation and compactness.}
\author{A. Maione}
\address{Alberto Maione: Dipartimento di Matematica\\Universit\`a di Trento\\ Via Sommarive 14\\ 38123, Povo (Trento) - Italy\\}
\email{alberto.maione@unitn.it}
\thanks{A.M is supported by MIUR, Italy, GNAMPA of INDAM and University of Trento, Italy.}
\date{\today}
\author{A. Pinamonti}
\address{Andrea Pinamonti: Dipartimento di Matematica\\Universit\`a di Trento\\ Via Sommarive 14\\ 38123, Povo (Trento) - Italy\\}
\email{andrea.pinamonti@unitn.it}
\thanks{A.P. is supported by MIUR, Italy, GNAMPA of INDAM and University of Trento, Italy.}
\author{F. Serra~Cassano}
\address{Francesco Serra Cassano: Dipartimento di Matematica\\Universit\`a di Trento\\ Via Sommarive 14\\ 38123, Povo (Trento) - Italy\\}
\email{cassano@science.unitn.it}
\thanks{F.S.C. is supported by MIUR, Italy, GNAMPA of INDAM and University of Trento, Italy.}
\begin{document}

\begin{abstract} Given a family of locally Lipschitz  vector fields $X(x)=(X_1(x),\dots,X_m(x))$ on $\Rn$, $m\leq n$, we study functionals  depending on $X$.
We prove an integral representation  for local  functionals with respect to  $X$ and a result of $\Gamma$-compactness for a class of integral functionals depending on $X$.
\end{abstract}
\maketitle
\section{Introduction}
In this paper we will deal with the $\Gamma$-convergence, with respect to $L^p(\Omega)$-topology, for integral functionals 
 $F,F_1 : \L p (\Om)\rightarrow [0,\infty]$, $1<\,\,p<\,\infty$,  defined by

\begin{equation}\label{F_p}
F(u):=
\displaystyle{\begin{cases}
\int_{\Om}f(x,Xu(x))dx&\text{ if }u\in\Co 1(\Om)%\cap\L p(\Om)
\\
\infty&\text{ otherwise}
\end{cases}
}
\end{equation}
and 
\begin{equation}\label{F1}
F_1(u):=
\displaystyle{\begin{cases}
\int_{\Om}f(x,Xu(x))dx&\text{ if }u\in W^{1,1}_{\rm loc}(\Om)%\cap\L p(\Om)
\\
\infty&\text{ otherwise}
\end{cases}
\,,
}
\end{equation}
where $X(x):=(X_1(x),\dots,X_m(x))$ is a given family of first linear differential operators, with Lipschitz coefficients on a bounded open set $\Omega\subset\Rn$, that is,
\[
X_j(x)=\,\sum_{i=1}^nc_{ji}(x)\partial_i\quad j=1,\dots,m
\]
 with $c_{ji}(x)\in Lip(\Om)$ for $j=1,\dots,m$, $i=1,\dots,n$ and where  $f:\Om\times\Rm\rightarrow [0,\infty]$  is a Borel function.  In the following, we will refer  to $X$ and $f$ as {\it $X$-gradient} and  {\it integrand function}, respectively. As usual, we will  identify each $X_j$ with the vector field $(c_{j1}(x),\dots,c_{jn}(x))\in \Lip(\Omega,\Rn)$. Moreover, we set 
\begin{equation}\label{coeffmatrvf}
C(x) = [c_{ji}(x)]_{
{i=1,\dots,n}\atop{j=1,\dots,
m}},
\end{equation}
and we will call $C(x)$  the {\it coefficient matrix of the $X$-gradient}. 
 
Throughtout the paper the class of integrand functions will tipically satisfy the following structural conditions: 
 
 \begin{itemize}
\item[($I_1$)] for every $\eta\in\Rm$, the function $f(\cdot,\eta):\,\Om\to [0,\infty]$ is Borel measurable on $\Om$;
\item[($I_2$)] for a.e. $x\in\Om$, the function $f(x, \cdot):\,\Rm\to [0,\infty)$ is convex;
\item[($I_3$)] there exists constants $c_1>\,c_0\ge\,0$ such that
\[
c_0\,|\eta|^p\le\,f(x,\eta)\le\,c_1\left(\left|\eta\right|^p+1\right)\,,
\]
for a.e. $x\in\Om$ and for each $\eta\in\Rm$.
\end{itemize}
We will denote  by $I_{m,p}(\Omega,c_0,c_1)$ the class of such integrand functions. 
%%%%%%%%%%%%%%%%%%%%%%%%%%%%%%%%%%%%%%%%%%%%%%%%%%%%%%%%%%%%
Notice that both functionals \eqref{F_p} and \eqref{F1} always admit an integral representation with respect to the Euclidean gradient. Indeed, for instance, functional \eqref{F_p} can be represented as follows
\[
F(u)=\,\int_\Om \Lef(x,Du)\,dx\text{ for each }u\in\ci^1(\Om)
\]
where $\Lef:\,\Omega\times\Rn\to [0,\infty]$  now denotes the {\it Euclidean integrand} defined as
\begin{equation}\label{Li}
\Lef(x,\xi):=\,f(x,C(x)\xi)\quad\text{ for a.e. }x\in\Om\,,\text{for each }\xi\in\Rn.
\end{equation}
Notice also that, in general, we cannot reverse this representation (see Counterexample \ref{counterex}). Moreover the representation with respect to the Euclidean gradient could yield a loss of coercivity. Indeed, for instance, let us consider as $X$-gradient the {\it Grushin }and {\it Heisenberg} vector fields in Example \ref{exvf} (ii) and (iii), respectively, and let $f(x,\eta)=\,|\eta|^2$. Then, it is easy to see that there are no positive constants $c>\,0$ such that the associated Euclidean integrand $\Lef(x,\xi)=\,f(x,C(x)\xi)=\,|C(x)\xi|^2$ satisfies
\[
\Lef(x,\xi)\ge\,c\,|\xi|^2\text{ for a.e. }x\in\Omega,\,\forall\,\xi\in\Rn\,,
\]
if  the open set $\Omega\subset\bbR^2$ contains some segment of the line $\{x_1=0\}$, for the Grushin vector field, and for each open set $\Omega\subset\bbR^3$, for the Heisenberg vector fields. Nonetheless, we will show that, by  replacing the Euclidean gradient with the $X$-gradient, we can get rid of this drawback.
%%%%%%%%%%%%%%%%%%%%%%%%%%%%%%%%%%%%%%%%%%%%%%%%%%%%%%%%%%%%%%%

Functional \eqref{F_p} was  studied in \cite{FSSC1} as far as its relaxation and in connection with the so-called Meyers-Serrin theorem for Sobolev spaces associated with the $X$-gradient, denoted $W^{1,p}_X(\Omega)$ (see Definition \ref{Definition 1.1.1} and \cite{FS}). As  a consequence, the following characterization of relaxed functionals $\bar F$ and $\bar F_1$ can be given (see \eqref{3.3}  and Theorem \ref{Theorem 3.1.1}): if $f\in I_{m,p}(\Omega,c_0,c_1)$ with $c_1\ge\,c_0>\,0$ and  $F^*:L^p(\Omega)\to [0,\infty]$ denotes the functional
\begin{equation}\label{Fstar}
F^*(u):=\displaystyle{\begin{cases}
\int_{\Om}f(x,Xu(x))dx&\text{ if }u\in W^{1,p}_{X}(\Om)
\\
\infty&\text{ otherwise}
\end{cases}
}\,,
\end{equation}
then
\begin{equation}\label{barF=barF1}
\bar F(u)=\,\bar F_1(u)=\,F^*(u)\quad\forall\,u\in L^p(\Om)\,.
\end{equation}
By \eqref{barF=barF1} and a well-known property of $\Gamma$-convergence (see \cite[Propostion 6.11]{DM}), the characterization of  $\Gamma$-limits  for functionals of type \eqref{F_p} or \eqref{F1}, associated to integrand functions in $I_{m,p}(\Omega,c_0,c_1)$, can be reduced to the one  for functionals of type  \eqref{Fstar} still associated to integrand functions in $I_{m,p}(\Omega,c_0,c_1)$. For getting such a characterization, the following structure assumption on the $X$-gradient turns out to be  a key point.

\begin{defi}\label{frkcond} We say that the family of vector fields $X(x)=\,(X_1(x),\break\dots, X_m(x))$ on an open set $\Omega\subset\Rn$ satisfies the {\it linear independence condition} (LIC)
%\red{(forse, questo nome potrebbe non essere appropriato in quanto si confonde con la proprietÃ  di Hormander)} , 
 if there exists a closed set ${\mathcal N}_X\subset\Om$ such that $|\mathcal N_X|=\,0$ and, for each $x\in\Om_X:=\,\Om\setminus\mathcal N_X$, $X_1(x),\dots, X_m(x)$ are linearly independent as vectors of $\Rn$.
\end{defi}
Let us point out that (LIC) condition embraces many relevant  families of  vector fields studied in literature (see Example \ref{exvf}). In particular neither the {\it H\"ormander condition} for $X$, that is, vector fields $X_j$'s are smooth and the rank of the Lie algebra generated by $X_1,\ldots, X_m$ equals $n$ at any point of $\Omega$, nor  the (weaker)  assumption that the $X$-gradient induces a {\it Carnot-Carath\'eodory metric} in $\Omega$ is requested. An exaustive account of these topics can be found in \cite{BLU}.

The main results of this paper are the following (see Theorems \ref{DMThm201ext} and \ref{mainthm}).
\begin{itemize}
\item Assume that the $X$-gradient satisfies (LIC) on $\Om$ and let us denote by $\mA$ the class of open sets contained in $\Omega$. Then an integral representation result, with respect to the $X$-gradient, is provided for a local functional $F:\,L^p(\Omega)\times\mA\to [0,\infty]$ satisfying suitable assumptions.

\item Assume that the $X$-gradient satisfies (LIC) on $\Om$,  and
let $F^*_h:\,L^p(\Omega)\to [0,\infty]$ ($h=1,2,\dots$) be a sequence of integral functionals  of the form \eqref{Fstar} with $f\equiv f_h$, where $(f_h)_h\subset I_{m,p}(\Om,c_0,c_1)$ for given  constants $0<\,c_0\le\,c_1$.
%\begin{equation}\label{Fstarh}
%F^*_h(u,A):=
%\displaystyle{\begin{cases}
%\int_{A}f_h(x,Xu(x))dx&\text{ if }A\in\mA,\,u\in W^{1,p}_X(A)\\
%+\infty&\text{ otherwise}
%\end{cases}
%\,,
%}
%\end{equation}
Then, up to a subsequence, $(F^*_h)_h$ $\Gamma$-converges, in   $L^p(\Om)$-topology, to a functional $F^*:\,L^p(\Omega)\to [0,\infty]$, and $F^*$ can be still represented as in \eqref{Fstar},  for a suitable  integrand function $f\in I_{m,p}(\Om,c_0,c_1)$.
\end{itemize}
We will also single out two signifiant integrand function subclasses $J_i\subset I_{m,p}(\Om,c_0,c_1)$ ($i=1,2$) for which the associated functionals in \eqref{Fstar} are still compact with respect to $\Gamma$- convergence with respect to $L^p(\Om)$-topology (see Theorem \ref{subclasscomp}). 
%\begin{equation}\label{GammalimitF}
%F(u,A):=
%\displaystyle{\begin{cases}
%\int_{A}f(x,Xu(x))dx&\text{ if }A\in\mA,u\in W^{1,p}_X(A)\\
%+\infty&\text{ otherwise}
%\end{cases}
%\,,
%}
%\end{equation}
%%%%%%%%%%%%%%%%%%%%%%%%%%%%%%%%%%%%%%%%%

The techniques for showing the integral representation Theorem \ref{DMThm201ext} rely on the analogous classical  integral representation result  for the Euclidean gradient  (see \cite[Theorem 20.1]{DM} ), together with a characterization of integral functionals depending on  the Euclidean gradient which can be also represented with respect to a given $X$-gradient (see Theorem \ref{reprvfeg}). Let us stress that    we cannot here exploit, as in the case of the Euclidean gradient, the approximation by piecewise-affine functions in classical Sobolev space $W^{1,p}(\Om)$, since it could not work  in Sobolev space $W^{1,p}_X(\Om)$ (see section 2.3).  The strategy for showing the $\Gamma$-compactness Theorem \ref{mainthm} will consists of two steps.

\noindent{\bf 1st step.} By applying classical results contained in \cite{DM}, we will prove the following result (see Theorem \ref{GammacompFheucl}): let $(f_h)_h\subset I_{m,p}(\Om,c_0,c_1)$,  let 
$(F_h)_h$ be a sequence of integral functionals on $L^p(\Omega)\times\mA$, $1<\,\,p<\,\infty$, of the form
\begin{equation}\label{Fh}
F_h(u,A):=
\displaystyle{\begin{cases}
\int_{A}\Lefh(x,Du(x))dx&\text{ if }A\in\mA,\,u\in W^{1,1}_{\rm loc}(A)\\
\infty&\text{ otherwise}
\end{cases}
\,,
}
\end{equation}
where
\begin{equation}\label{Lefh}
\Lefh(x,\xi):=\,f_h(x,C(x)\xi)\quad x\in\Om,\,\xi\in\Rn\,.
\end{equation}

Then, up to a subsequence, there exists $F:\,L^p(\Om)\times\mA\to [0,\infty]$ such that 
\begin{equation}\label{GammalimitFeuc}
F(\cdot,A)=\,\Gamma(L^p(\Om))-\lim_{h\to\infty}F_h(\cdot,A)\quad\text{for each }A\in\mA\,,
\end{equation}
and $F$ can be represented by an integral form on $W^{1,p}(A)$ by means of an {\bf Euclidean integrand function}, that is,
\begin{equation}\label{GammalimitFeucrepresW1p}
F(u,A):=\int_{A}f_e(x,Du(x))\,dx
\end{equation}
for every $A\in\mA$, for every $u\in L^p(\Om)$ such that $u|_A\in W ^{1,p}(A)$ for a suitable Borel function $f_e:\,\Omega\times\Rn\to [0,\infty]$.
\vskip5pt
\noindent{\bf 2nd step.} We will show that the class $I_{m,p}(\Om,c_0,c_1)$ satisfies the following closure property with respect to $\Gamma(L^p(\Om))$-convergence (see Theorem \ref{GammaclosureImp}): assume that $(f_h)_h\subset I_{m,p}(\Om,c_0,c_1)$ and \eqref{GammalimitFeuc} and \eqref{GammalimitFeucrepresW1p} hold, then $F$ satisfies the assumptions of the integral representation Theorem \ref{DMThm201ext}. Thus $F$ can be also represented in the integral form \eqref{Fstar}, by means of an integrand function $f\in I_{m,p}(\Om,c_0,c_1)$.  
\vskip5pt

Eventually let us point out that the $\Gamma$-convergence for functionals such as in \eqref{F_p}  have been studied in the framework of {Dirichlet forms} \cite{MR, Fu}, but for special  integrand functions $f$ and $X$-gradient satisfying the  H\"ormander condition,(see, for instance, \cite{Mo, BT} and references there in).
Other variational convergences, such  as homogenization and $H$-convergence for subelliptic PDEs   have been  also widely studied , always assuming the $X$-gradient satisfying the H\"ormander condition  (see, for instance, \cite{BMT,BPT1, BPT2, FT,FGVN,FTT,BFT,BFTT} and the references there in). In the subsequent paper \cite{MPSC} we will be concerned with relationships between $\Gamma$-convergence of  functionals depending on vector fields and convergence of their minimizers. Thus, we will refer to \cite{MPSC} for a comparison among our results with those already present in literature. 
%Moreover, in the forthcoming paper \cite{MPSC2} we plan to study homogenization in this possible degenerate setting. Thus, we will compare there our results with those contained in previous papers.
\vskip5pt

\noindent{\it Acknowledgements.} We thank A. Braides, G. Buttazzo, G. Dal Maso, A. Defranceschi and B. Franchi   for useful suggestions and discussions on this topic. %We also thank A. Braides for useful suggestions.

\section{Vector fields and Sobolev spaces depending on vector fields}

\subsection{Notation and definitions}
Through this paper 
$\Omega\subset\Rn$ is a fixed open set and $\overline{\mathbb{R}}=[-\infty,\infty]$. If $v, w\in\Rn$, we denote
by $|v|$ and $\scalare v w$ the Euclidean norm and the scalar product,
respectively. 
%If $x\in \Rn$ and $E\subset \Rn$ then $\di (x, E) =\inf\{|x-y|:y\in E\}$. 
If $\Om$ and $\Om'$ are subsets of $\Rn$ then $\Om'\Subset \Om$ means that $\Om'$ is
 compactly contained in $\Om$. Moreover, $B(x,r)$ is the open Euclidean ball of radius 
$r$ centered at $x$. Sometimes we will denote by $B^k(x,r)$ the open Euclidean ball of radius 
$r$ centered at $x\in\bbR^k$ in $\bbR^k$. If $A\subset \Rn$ then $\chi_A$ is the characteristic
function of $A$, $|A|$ is its n-dimensional Lebesgue measure $\mathcal L^n$ and by notation {\it a.e. $x\in A$}, we will simply mean $\mathcal L^n$-a.e. $x\in A$. 
%By a Radon measure $\mu$ on an open set $\Om\subset\Rn$ we will mean of a set function $\mu:\,\mathcal B(\Om)\to [0,\infty]$, where $\mathcal B(\Om)$ denote the class of Borel sets of $\Om$ satisfying $\mu(K)<\infty$ for each compact set $K\subset\Om$.
%for $k\geq0$, $\Ha k(A)$ is its $k$-dimensional Hausdorff measure(\red{serve?}).  

In the sequel we denote by $\Co k (\Omega)$ the space of $\bbR$-valued functions $k$ times
continuously differentiable and by $\Co k_c (\Omega)$  the subspace of $\Co k (\Omega)$ whose functions have support compactly contained in $\Omega$.

%by $\Lip(\Om;\Rm)$ the space of $\Rm$-valued Lipschitz
%continuous functions and we set 
%$\Co k_c(\Omega;\Rm) = \Co k (\Omega;\Rm)\cap\mathcal  E'(\Omega;\Rm)$ and 
% $\Lip_c(\Om;\Rm) = \Lip(\Om;\Rm)\cap\mathcal E'(\Omega;\Rm)$.
%Moreover, for sake of brevity, we  write $\Co k (\Omega)$ and
%$\Co k_c(\Omega)$ if $m=1$. 
%Finally $\mathcal M(\Om;\Rm)$ is the space of 
%$\Rm$-valued Radon measures.

We will use   spherically symmetric mollifiers $\rho_\ep$ defined
by $\rho_\ep(x):=\ep^{-n}\rho(\ep^{-1}|x|)$, where $\rho\in \Czero^\infty ([-1,1])$, $\rho\geq0$ 
and
$\int_0^1 \rho(t)dt=|B(0,1)|^{-1}$. 
%The latter fits the geometry of $\Rn$ meant as a {\it Carnot group} and will be denoted by $(\rho_\ep^{\bbG})_\ep$ and will be introduced later in \eqref{intrmoll}.

%We assume that $X_1,\dots,X_m$ are Lipschitz continuous vector fields on 
%$\Omega$, where $X_j=(c_{j1},\dots,c_{jn})$, $j=1,\dots,m$. We
%identify each vector field with the first order differential operator
%$\sum_i c_{ji}(x){\partial_{x_i}}$. 
%Moreover, we set 
%$X=(X_1,\dots,X_m)$ and we will refer to $X$ as {\it $X$-gradient}, while
%\begin{equation}\label{coeffmatrvf}
%C(x) = [c_{ji}(x)]_{
%{i=1,\dots,n}\atop{j=1,\dots,
%m}},
%\end{equation}
%and we will call $C(x)$  {\it coefficient matrix of $X$-gradient}. 

For any $u\in\L 1(\Om)$  define $Xu$ 
as an element of ${\mathcal D}'(\Om;\Rm)$ as follows
\[
\begin{split}
Xu(\psi):&=(X_1u(\psi_1),\dots,X_mu(\psi_m))\\
         &=-\int_\Om u\left(\sum_{i=1}^n\partial _{x_i}( c_{1,i}\,\psi_1),\dots, \sum_{i=1}^n\partial _{x_i}( c_{m,i}\,\psi_m)\right)\;dx
        % &= -\int_\Om u\,\diver( \Ct\psi)\;dx,
\end{split}
\]
$\forall\psi=\vettore {\psi} m \in\Czero^\infty(\Om;\Rm)$.
If we set 
$X^T\psi:=\,(X^T_1\psi_1,\dots, X^T_m\psi_m)$ with
\[
X_j^T\varphi:=\,\int_\Om\sum_{i=1}^n\partial _{x_i}( c_{j,i}\,\varphi)\,dx\quad\forall\,\varphi\in\ci^\infty_c(\Om),\, \forall\,j=1,\dots,m\,,
\]
 the aspect of the definition is even more familiar
$$
Xu(\psi):=-\int_\Om u X^T\psi\;dx \qquad \forall\psi \in\Czero^\infty(\Om;\Rm).
$$
%\begin{defi}\label{frkcond} We say that the family of vector fields $X(x)=\,(X_1(x),\break\dots, X_m(x))$ on an open set $\Omega\subset\Rn$ satisfies the {\it linear independence condition} (LIC)
%%\red{(forse, questo nome potrebbe non essere appropriato in quanto si confonde con la proprietÃ  di Hormander)} , 
% if there exists a closed set ${\mathcal N}_X\subset\Om$ such that $|\mathcal N_X|=\,0$ and, for each $x\in\Om_X:=\,\Om\setminus\mathcal N_X$, $X_1(x),\dots, X_m(x)$ are linearly independent as vectors of $\Rn$.
%\end{defi}
\begin{oss}{\rm If $X=\,(X_1,\dots,X_m)$ satisfies (LIC) on an open set $\Omega\subset\Rn$, then $m\le\,n$. Moreover, by the well-known extension result for Lipschitz functions, without loss of generality, we can assume that vector fields' coefficients $c_{ji}\in Lip_{\rm loc}(\Rn)$ for each $j=1,\dots,m$, $i=1,\dots,n$.
}
\end{oss}
\begin{es}[Relevant vector fields]\label{exvf}
\rm{
\item[(i)] (Euclidean gradient ) Let $X=(X_1,\dots,X_n)=\,D:=\,(\partial_{1},\dots,\partial_{n})$. In this case the coefficients matrix $C(x)$ of $X$  is a $n\times n$ matrix and 
\begin{equation}\label{euclvf}
C(x)=\,{\rm I_n}\quad\forall\,x\in\Rn\,,
\end{equation}
denoting $I_n$ the identity matrix of order $n$.

\item[(ii)] (Grushin vector fields) Let $X=(X_1,X_2)$ be the vector fields on  $\bbR^2$ defined as 
\[
X_1(x):=\,\partial_{_1},\quad X_2(x):=\,x_1\,\partial_{_2}\text{ if } x=(x_1,x_2)\in\bbR^2\,.
\]
In this case the coefficients matrix $C(x)$ of  $X$  is a $2\times 2$ matrix and 
\begin{equation}\label{grushinvf}
C(x):=\,\left[
\begin{matrix}
1&0\\
0&x_1
\end{matrix}
\right]
\end{equation}

\item[(iii)] (Heisenberg vector fields) Let $X=(X_1,X_2)$ be the vector fields on  $\bbR^3$ defined as 
\[
X_1(x):=\,\partial_{1}-\frac{x_2}{2}\partial_{3},\;X_2(x):=\,\partial_{2}+\frac{x_1}{2}\partial_{3}\text{ if } x=(x_1,x_2,x_3)\in\bbR^3\,.
\]
In this case the   coefficients matrix  $C(x)$ of  $X$  is a $2\times 3$ matrix and 
\begin{equation}\label{heisvf}
C(x):=\,\left[
\begin{matrix}
1&0&\displaystyle{-{x_2}/{2}}\\
0&1&\displaystyle{{x_1}/{2}}
\end{matrix}
\right]
\end{equation}
%\item[(iv)] \red{(Differentiable Manifolds)....}

Notice that  all three families of vector fields satisfy (LIC) respectively in $\Om=\,\Rn$, $\Omega=\,\bbR^2$ and $\Om=\,\bbR^3$ . Indeed, it suffices to take $\Omega_X=\Omega$ in $(i)$ and $(ii)$ and $\Omega_{X}=\Omega\setminus \mathcal{N}_X$ with $\mathcal N_X:=\,\{(0,x_2):\,x_2\in\bbR\}$ in $(ii)$. Moreover they are locally Lipschitz continuous in $\Omega$.

%Indeed it clear than in the case of Euclidean gradient and Heisenberg vector fields $\Om_X=\,\Om$. In the case of Grushin vector fields, by choosing $\mathcal N_X:=\,\{(0,x_2):\,x_2\in\bbR\}$ and $\Om_X:=\,\Om\setminus\mathcal N_X$, then $X_1(x)$ and $X_2(x)$ are linearly independent in $\bbR^2$ for each $x\in\Om_X$.
}
\end{es}
%Important families of vectors fields $X=\,(X_1,\dots,X_m)$ on $\Rn$ are the ones inducing a so-called {\it Carnot group structure}. Let us recall the notion of {\it Carnot group} or also  {\it stratified group} (\cite{FS,BLU} ).\red{da completare}.
%\begin{defi}\label{Cargroupgen}We say that a family of vector fields $X=\,(X_1,\dots,X_m)$ on $\Rn$ induces a Carnot group structure if..\red{da completare}
%\end{defi}
\begin{defi}\label{Definition 1.1.1} For $1\leq p\leq\infty$ we set
\[
\begin{split}
{W}_X^{1, p}(\Om)&:=\left\{ u\in L^ p(\Om):X_j u\in L^ p(\Om)\ {\hbox{\rm
for }} j=1,\dots,m\right\}\\
{W}_{X;loc}^{1, p}(\Om)&:=\left\{u: u|_{\Om'}\in W_X^{ 1,p}(\Om')\text{ for every open set }\Om'\Subset\Om\right\} 
\end{split}
\]
\end{defi}
\begin{oss} Since vector fields $X_j$ have  locally Lipschitz continuous coefficients, $\partial_{i }c_{j,i}\in L_{\rm loc}^\infty(\Rn)$ for each $j=1,\dots,m,\,i=1,\dots,n$, thus, by definition, it is immediate that, for each open bounded set $\Omega\subset\Rn$,
\begin{equation}\label{inclclassSobsp}
W^{1,p}(\Om)\subset\W1 p(\Om)\quad\forall\,p\in [1,\infty]\,,
\end{equation}
and for any $u\in W^{1,p}(\Omega)$
\begin{equation}\label{represXbyD}
Xu(x)=\,C(x)\,Du(x)\quad\text{ for a.e. }x\in\Om\,,
\end{equation}
where $W^{1,p}(\Om)$ denotes the classical Sobolev space, or, equivalently, the space $\W1 p(\Om)$ associated to $X=\,D:=\,(\partial_{x_1},\dots,\partial_{x_n})$ (see Example \ref{exvf} (i)). Moreover it is easy to see that inclusion \eqref{inclclassSobsp} can be strict and  turns  out to be continuous. As well, there is the inclusion
\begin{equation}\label{inclclassSobsploc}
W^{1,p}_{\rm loc}(\Om)\subset W_{X;{\rm loc}} ^{1,p}(\Om)\quad\forall\,p\in [1,\infty]\,,
\end{equation}

\end{oss}
The following Proposition is proved in \cite{FS}
\begin{prop}\label{psaw} $\W 1 p (\Om)$ endowed with the norm
$$
\Norma u {W_X ^{1, p} (\Om)} := \Norma u {\L p (\Om)} + \sum_{i=1}^m \Norma {X_j u} {\L 
p (\Om)}
$$
is a Banach space, reflexive if $1<p<\infty$. 
\end{prop}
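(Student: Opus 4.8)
The statement to prove is Proposition \ref{psaw}: $\W1p(\Om)$ with the given norm is a Banach space, reflexive for $1<p<\infty$.

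The plan is to realize $\W1p(\Om)$ as a closed subspace of a product of $L^p$ spaces via the linear isometry $u\mapsto(u,X_1u,\dots,X_mu)$, and then invoke standard functional-analytic facts: closed subspaces of Banach spaces are Banach, and closed subspaces of reflexive spaces are reflexive (since $L^p$ is reflexive for $1<p<\infty$ and finite products of reflexive spaces are reflexive).

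First I would observe that the map
\[
T:\W1p(\Om)\to L^p(\Om)^{m+1},\qquad Tu:=(u,X_1u,\dots,X_mu)
\]
is linear, and that $\Norma{u}{W_X^{1,p}(\Om)}$ is equivalent to the natural norm on $L^p(\Om)^{m+1}$ restricted to the image of $T$ (indeed it equals $\|u\|_p+\sum_j\|X_ju\|_p$, which is comparable to $(\|u\|_p^p+\sum_j\|X_ju\|_p^p)^{1/p}$). Hence $T$ is a topological isomorphism onto its image, and it suffices to show that $T(\W1p(\Om))$ is closed in $L^p(\Om)^{m+1}$.

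The key step is the closedness of the image, which rests on the fact that the distributional operators $X_j$ are continuous from $L^p(\Om)$ to $\mathcal D'(\Om)$: if $u_k\to u$ in $L^p(\Om)$ and $X_ju_k\to v_j$ in $L^p(\Om)$, then for every test function $\varphi\in\ci^\infty_c(\Om)$ one has $X_ju_k(\varphi)=-\int_\Om u_k X_j^T\varphi\,dx\to-\int_\Om u X_j^T\varphi\,dx=X_ju(\varphi)$, using that $X_j^T\varphi\in L^{p'}(\Om)$ because the coefficients $c_{ji}$ and their derivatives $\partial_i c_{ji}$ are locally bounded (indeed $X_j^T\varphi=\sum_i(c_{ji}\partial_i\varphi+(\partial_i c_{ji})\varphi)\in L^\infty_c(\Om)$). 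On the other hand $X_ju_k(\varphi)\to\int_\Om v_j\varphi\,dx$ since $X_ju_k\to v_j$ in $L^p$. By uniqueness of limits in $\mathcal D'(\Om)$ we get $X_ju=v_j\in L^p(\Om)$, so $u\in\W1p(\Om)$ and $(u,v_1,\dots,v_m)=Tu\in T(\W1p(\Om))$; thus the image is closed.

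Finally I would assemble the conclusion: being a closed subspace of the Banach space $L^p(\Om)^{m+1}$ (with the equivalent norm transported through $T$), $\W1p(\Om)$ is complete, hence a Banach space; and when $1<p<\infty$, $L^p(\Om)$ is reflexive, so the finite product $L^p(\Om)^{m+1}$ is reflexive, and a closed subspace of a reflexive space is reflexive, whence $\W1p(\Om)$ is reflexive. I expect the only mildly delicate point to be the justification that $X_j^T\varphi$ is an admissible test-function-paired object, i.e. the integration-by-parts identity defining $X_ju$ makes sense and passes to the limit; this is exactly where the local Lipschitz regularity of the coefficients is used, and it is otherwise routine.
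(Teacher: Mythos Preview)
The paper does not actually provide a proof of this proposition; it simply states that it is proved in \cite{FS}. Your argument is correct and is exactly the standard one: the isometric embedding $u\mapsto(u,X_1u,\dots,X_mu)$ into $L^p(\Om)^{m+1}$, closedness of the image via the continuity of $X_j$ from $L^p$ to $\mathcal D'(\Om)$ (which uses precisely that $X_j^T\varphi\in L^\infty_c(\Om)$ thanks to the Lipschitz coefficients), and then the inheritance of completeness and reflexivity from the ambient product space. This is almost certainly the argument in \cite{FS} as well, so there is nothing to compare.
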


% \begin{defi}\label{W1p0X} For $1\le \,p<\,\infty$, we will denote with $W^{1,p}_{X,0}(\Omega)$ the closure of $\ci^\infty_c(\Omega)$ in $(W^{1,p}_{X}(\Omega),\Norma \cdot {W_X ^{1, p} (\Om)} )$.
% \end{defi}

\begin{oss}\label{extpropoW1pXloc}The following properties hold for functions in $W_{X;{\rm loc}}^{1,p}(\Om) $: 
\begin{itemize}
	\item[(i)] let $u\in L^p(\Om)$ and assume there exists an open set $A\subset\Om$ such that $u|_A\in W_{X;{\rm loc}} ^{1,p}(A)$. Then, for every open set $A'\Subset A$, there exists 
\begin{equation}\label{extpropw}
w\in\W1p(\Om) \text{ such that }u|_{A'}=\,w|_{A'}\,.
\end{equation}
 Indeed, there exists a cut-off function $\vf\in\Co1_c(A)$ such that $\vf\equiv 1$ in $A'$. If 
\[
w(x):=\,u(x)\,\vf(x) \text{ if }x\in\Om\,,
\]
then it is easy to see that $w$ satisfies \eqref{extpropw}.
\item[(ii)] Let $\{A_1,\ldots, A_N\}$ be a finite family of open subsets of $\Omega$ and let $u\in L^p(\Omega)$. If $u_{|A_i}\in W^{1,p}_X(A_i)$ for all $i=1,\ldots, N$ then $u \in  W_{X}^{1,p}\left(\bigcup_{i=1}^{N} A_i\right)$.
Consider a partition of unity subordinate to the covering $\{A_1,\ldots, A_N\}$, i.e., nonnegative functions $\{\eta_1,\ldots, \eta_N\} \subset C^{\infty}_c\left(\bigcup_{i=1}^{N} A_i\right)$ such that each $\eta_j$ has support in some $A_i$ and $\quad \sum_{j=1}^N \eta_j(x)=1$ for all $x\in \bigcup_{i=1}^{N} A_i$. Set $u_j=u\eta_j$. Since the support of $\eta_j$ is contained in some $A_i$, it is clear that  $u_j\in W_{X}^{1,p}\left(\bigcup_{i=1}^N A_i\right)$. The conclusion follows observing that $u=\sum_{j=1}^N u_j$.

\item[(iii)] Let $A\subset \Omega$ be an open subset and let $u\in L^p(A)$ be such that there exists $M>0$, $\|u\|_{W_{X}^{1,p}(A')}\leq M$ for any $A'\Subset A$, then $u\in W_{X}^{1,p}(A)$. It is easy to see that $u$ admits the weak gradient $Xu$. Consider a sequence of open subsets of $A$, $\{A_i\}_{i\in \mathbb{N}}$ with $A_i\Subset A_{i+1}$ and $A\subseteq \bigcup_{i=1}^{\infty} A_i$
\[
\int_{A} |X u|^p\, dx\leq \int_{\bigcup_{i=1}^{\infty} A_i} |Xu|^p\, dx=\lim_{i\to \infty}\int_{ A_i} |X u|^p\, dx\leq M
\]
and the conclusion follows.
\item[(iv)] Let $A\subset\Omega$ be an open subset and $u\in W_{X}^{1,p}(A)$, then $u_{|B}\in W_{X}^{1,p}(B)$ for any open set $B\subseteq A$. The thesis follows easily observing that $C^{\infty}_c(B)\subseteq C^{\infty}_c(A)$.
\end{itemize}
 
\end{oss}

\subsection{Approximation by regular functions}
Let us recall in this section some results of approximation by regular functions in these anisotropic Sobolev spaces. In particular the analogous of the celebrated Meyers-Serrin theorem, proved, independently, in \cite{FSSC1} and \cite{GN}.
Analogous results (under some additional assumptions) in the weighted cases are proved in \cite{FSSC2}, see also \cite{APS} for a generalization to metric measure spaces.

Here and in the sequel, if $u:\Om\to\bar\bbR$, we will denote by $\bar u:\Rn\to\bar\bbR$ its extension to the whole $\Rn$ being $0$ outside of $\Omega$.

\begin{prop}\label{Proposition 1.2.2}
Assume $u\in\W 1 p (\Om) $ for $1\leq p<\infty$.  Then if $\Om'\Subset\Om $
$$
\lim_{\ep\to 0}\Norma {\bar u\ast \rho_\ep-u} {\W 1 p (\Om')}=0,
$$
where $\rho_{\ep}(x)=\ep^{-n}\rho(\ep^{-1}|x|)$ is a
 mollifier supported in $B(0,\ep)$.
\end{prop}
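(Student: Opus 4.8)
The plan is to prove the local approximation statement $\lim_{\ep\to 0}\Norma{\bar u\ast\rho_\ep-u}{\W 1 p(\Om')}=0$ for $u\in\W 1 p(\Om)$ and $\Om'\Subset\Om$ by the standard mollification argument, adapted to the vector fields $X_j$. First I would fix $\Om'\Subset\Om''\Subset\Om$ and note that for $\ep$ smaller than $\dist(\Om'',\bo\Om)$ the convolution $\bar u\ast\rho_\ep$ is smooth on a neighbourhood of $\ov{\Om'}$ and, on $\Om''$, coincides with $u\ast\rho_\ep$ (no boundary effects enter). The $L^p$ convergence $\bar u\ast\rho_\ep\to u$ in $\L p(\Om')$ is classical. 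The heart of the matter is to show $X_j(\bar u\ast\rho_\ep)\to X_j u$ in $\L p(\Om')$ for each $j=1,\dots,m$.

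The key computation is a commutation identity between $X_j$ and convolution. Writing $X_j=\sum_i c_{ji}\bo_i$, one has on $\Om''$
\[
X_j(u\ast\rho_\ep)(x)=\sum_{i=1}^n c_{ji}(x)\,\bo_i(u\ast\rho_\ep)(x)=\sum_{i=1}^n c_{ji}(x)\,(\bo_i\rho_\ep\ast u)(x),
\]
and since the distributional identity $X_j u=\sum_i c_{ji}\bo_i u$ holds (with $\bo_i u$ merely a distribution when $p$-integrability of $X_j u$ does not force individual $\bo_i u\in L^p$), I would instead write, using $X_j u\in\L p(\Om)$ and integration by parts against the mollifier,
\[
(X_j u)\ast\rho_\ep(x)=\int_\Om \rho_\ep(x-y)\,X_j u(y)\,dy
= -\int_\Om u(y)\sum_{i=1}^n\bo_{y_i}\bigl(c_{ji}(y)\rho_\ep(x-y)\bigr)\,dy
\]
for $x\in\Om''$, the boundary terms vanishing because $\rho_\ep(x-\cdot)$ is compactly supported in $\Om$. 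Expanding the derivative and comparing with $X_j(u\ast\rho_\ep)(x)=\sum_i c_{ji}(x)\int u(y)\,\bo_{x_i}\rho_\ep(x-y)\,dy=-\sum_i c_{ji}(x)\int u(y)\,\bo_{y_i}\rho_\ep(x-y)\,dy$, one obtains the commutator identity
\[
X_j(u\ast\rho_\ep)(x)-(X_j u)\ast\rho_\ep(x)
=\int_\Om u(y)\Bigl(\sum_{i=1}^n \bigl(c_{ji}(x)-c_{ji}(y)\bigr)\bo_{y_i}\rho_\ep(x-y)
-\sum_{i=1}^n (\bo_i c_{ji})(y)\,\rho_\ep(x-y)\Bigr)dy.
\]
Since $(X_j u)\ast\rho_\ep\to X_j u$ in $\L p(\Om')$ by the classical property, it suffices to show the commutator tends to $0$ in $\L p(\Om')$. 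The second term there is $-\bigl((\dive_{\rm e} X_j)\,u\bigr)\ast\rho_\ep$ up to renaming, which is uniformly $L^p$-bounded on $\Om'$ but does NOT vanish; so in fact the correct bookkeeping is that $X_j(u\ast\rho_\ep)$ should be compared directly with $(X_j u)\ast\rho_\ep$ plus a term carrying $\sum_i(\bo_i c_{ji})u$, and one checks these cancel against the analogous terms hidden in the distributional definition of $X_j u$ given in the paper (the $X^T$ formulation). I would therefore work throughout with the paper's distributional definition $X_j u(\psi)=-\int u\, X_j^T\psi$, test it against $\psi(y)=\rho_\ep(x-y)$, and land on the clean estimate: the difference is controlled by $\int |u(y)|\,|c_{ji}(x)-c_{ji}(y)|\,|\bo_{y_i}\rho_\ep(x-y)|\,dy$, which by the Lipschitz bound $|c_{ji}(x)-c_{ji}(y)|\le L|x-y|$ and $|x-y|\le\ep$ on the support is bounded by $L\ep\cdot|\bo\rho_\ep|\ast|u|\lesssim L\,(|u|\ast\ep^{-n}\rho'(\ep^{-1}|\cdot|))$, uniformly $L^p$-bounded by Young's inequality; a density argument (approximating $u$ in $\L p$ by $C^\infty_c$ functions, for which the commutator manifestly $\to 0$) upgrades boundedness to convergence.

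The main obstacle is precisely this commutator analysis: one must handle the fact that the individual Euclidean partials $\bo_i u$ need not be $L^p$ functions, so the manipulation cannot be done pointwise and must be carried out at the level of distributions/test functions, with careful attention to which boundary terms vanish (they do, on $\Om''$, once $\ep$ is small). Once the commutator is shown to be uniformly bounded in $\L p(\Om')$ by $C\|u\|_{\L p(\Om'')}$ and to vanish for smooth $u$, the general case follows by the standard $3\eps$ density argument using Proposition~\ref{psaw} (reflexivity is not needed, just the Banach norm) together with the continuity of mollification. Assembling the $L^p$ convergence of the function and of each $X_j$-derivative then gives convergence in the $\W 1 p(\Om')$ norm, completing the proof.
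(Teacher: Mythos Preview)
The paper does not give its own proof of this proposition; it simply refers to \cite{FSSC1} and \cite{GN}. Your plan is the standard Friedrichs commutator argument used in those references, and the overall architecture is correct: reduce to showing $X_j(u\ast\rho_\ep)-(X_ju)\ast\rho_\ep\to 0$ in $L^p(\Om')$, establish a uniform operator bound $L^p\to L^p$ on the commutator, verify it vanishes on smooth $u$, and conclude by density.

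There is, however, a genuine muddle in the middle that you should straighten out. After writing the commutator as
\[
X_j(u\ast\rho_\ep)(x)-(X_ju)\ast\rho_\ep(x)=\underbrace{-\sum_i\int u(y)\bigl(c_{ji}(x)-c_{ji}(y)\bigr)\bo_{y_i}\rho_\ep(x-y)\,dy}_{A}
+\underbrace{\int u(y)\sum_i(\bo_i c_{ji})(y)\,\rho_\ep(x-y)\,dy}_{B},
\]
you claim that by ``working with the distributional definition'' term $B$ cancels and only $A$ remains. This is not true: the expression above already \emph{is} what the distributional definition gives (testing $X_ju$ against $\rho_\ep(x-\cdot)$), and there is nothing further hidden to cancel $B$. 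Individually, $B=((\dive_{\rm e}X_j)\,u)\ast\rho_\ep\to(\dive_{\rm e}X_j)\,u$ in $L^p(\Om')$, which is nonzero in general.

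The correct resolution is exactly the density scheme you outline, applied to the \emph{full} commutator $A+B$ rather than to $A$ alone. Term $A$ is uniformly bounded by Young's inequality via the Lipschitz estimate you wrote; term $B$ is uniformly bounded because $\dive_{\rm e}X_j\in L^\infty_{\rm loc}$ and convolution with $\rho_\ep$ is a contraction on $L^p$. For $u\in\Co\infty_c(\Om)$ the commutator tends to $0$ in $L^p(\Om')$ trivially (both $X_j(u\ast\rho_\ep)$ and $(X_ju)\ast\rho_\ep$ converge to $X_ju$). Density of $\Co\infty_c(\Om)$ in $L^p(\Om)$ then finishes the job. If you prefer a direct computation: the change of variables $z=(x-y)/\ep$ shows $A\to -(\dive_{\rm e}X_j)\,u$ in $L^p(\Om')$ (using $\int z_k\,\bo_i\rho=-\delta_{ik}$), which cancels the limit of $B$; but this cancellation happens only in the limit, not at the level of the integrand, and it is not obtained by any rewriting of the distributional definition.
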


\begin{defi}\label{Definition 1.1.3} For $1\leq p\leq\infty$ we set
$$
\HS 1 p (\Om):= {\hbox{\rm closure of }} \Co 1 (\Om)\cap {\W 1 p (\Om)} \;
{\hbox{\rm in  }} {\W 1 p (\Om)}
$$ 
\end{defi}
As for the usual Sobolev spaces $ \HS 1 p (\Om)\subset {\W 1 p (\Om)}$. The classical  result \lq$H=W$\rq of Meyers and Serrin (\cite{MS}) still holds for  these anisotropic Sobolev spaces.

\begin{teo}\label{Theorem 1.2.3} Let $\Om$ be an open subset of $\Rn$ and $1\leq p <\infty$. Then 
$$ \HS 1 p (\Om)= {\W 1 p (\Om)}.$$
\end{teo}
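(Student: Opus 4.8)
The plan is to mimic the classical Meyers--Serrin argument, which localizes with a partition of unity and regularizes each piece by mollification, using Proposition~\ref{Proposition 1.2.2} as the local mollification estimate. The only inclusion that is not immediate is $\W 1 p (\Om)\subseteq\HS 1 p (\Om)$, since the reverse inclusion holds by definition. So fix $u\in\W 1 p (\Om)$ and $\eps>0$; the goal is to produce $v\in\Co 1(\Om)\cap\W 1 p (\Om)$ with $\Norma{u-v}{\W 1 p (\Om)}<\eps$.

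First I would exhaust $\Om$ by an increasing sequence of open sets $\Om_k\Subset\Om_{k+1}\Subset\Om$ with $\bigcup_k\Om_k=\Om$ (with $\Om_0=\Om_{-1}=\emptyset$), and set $A_k:=\Om_{k+1}\setminus\overline{\Om_{k-1}}$, so that $\{A_k\}_{k\ge 1}$ is a locally finite open cover of $\Om$. Take a smooth partition of unity $\{\vf_k\}$ subordinate to $\{A_k\}$, $\vf_k\in\Co\infty_c(A_k)$, $\sum_k\vf_k\equiv 1$ on $\Om$. Each $u\vf_k$ lies in $\W 1 p(\Om)$ with compact support in $A_k$ (Leibniz rule: $X_j(u\vf_k)=\vf_k X_ju+u\,X_j\vf_k$, and $X_j\vf_k=\scal{X_j}{D\vf_k}$ is bounded). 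Now apply Proposition~\ref{Proposition 1.2.2}: for $\delta>0$ small enough (depending on $k$), the mollification $w_k:=\overline{u\vf_k}\ast\rho_{\delta_k}$ is $\Co\infty$, is supported in a slightly larger set still $\Subset\Om$ and, choosing $\delta_k$ small, contained in $A_k':=\Om_{k+2}\setminus\overline{\Om_{k-2}}$, and satisfies $\Norma{w_k-u\vf_k}{\W 1 p(\Om)}<\eps\,2^{-k}$. Set $v:=\sum_k w_k$. On any $\Om'\Subset\Om$ only finitely many $w_k$ are nonzero, so $v\in\Co\infty(\Om)\subseteq\Co 1(\Om)$; and since $u=\sum_k u\vf_k$ with the sum locally finite,
\[
\Norma{u-v}{\W 1 p(\Om)}\le\sum_{k}\Norma{u\vf_k-w_k}{\W 1 p(\Om)}<\eps.
\]
In particular $v\in\W 1 p(\Om)$, so $v\in\Co 1(\Om)\cap\W 1 p(\Om)$ and $u\in\HS 1 p(\Om)$.

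The one point requiring a little care — and the main obstacle — is the bookkeeping on supports: one must check that the mollification parameters $\delta_k$ can be chosen small enough that (a) $w_k$ stays supported in a set compactly contained in $\Om$ (so that Proposition~\ref{Proposition 1.2.2} applies on a set $\Om'\Subset\Om$ containing $\supp(u\vf_k)$ in its interior) and (b) the family $\{\supp w_k\}$ remains locally finite, which is what makes $v$ well-defined, smooth, and makes the triangle-inequality step legitimate (finitely many nonzero terms on each $\Om'\Subset\Om$, hence the sum of the norms bounds the norm of the sum locally, and then one lets $\Om'\uparrow\Om$ using Remark~\ref{extpropoW1pXloc}(iii)). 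Both are arranged by the standard trick of shrinking $\delta_k$ below the distance from $\supp\vf_k$ to $\partial A_k$ and below $2^{-k}$. Everything else is the verification of the Leibniz rule for $X_j$ and the elementary fact that $\Co\infty(\Om)\cap\W 1 p(\Om)$ is the relevant approximating class, which is immediate.
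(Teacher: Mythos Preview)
Your argument is correct and is precisely the classical Meyers--Serrin scheme (exhaustion, partition of unity, local mollification via Proposition~\ref{Proposition 1.2.2}, geometric summation), which is the approach of the references \cite{FSSC1} and \cite{GN} that the paper cites in lieu of a proof. There is nothing to add: the paper does not supply its own argument, and yours matches the standard one in the cited sources.
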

The proofs of Proposition \ref{Proposition 1.2.2} and Theorem \ref{Theorem 1.2.3} can be found in \cite{FSSC1} and \cite{GN}.

 Let us collect below some well-known properties about approximation by convolution and convex functions.

%An intrinsic approximation by convolution can be carried out within the Carnot groups (see \cite{FS} and \cite{CM}). In particular the following result holds.
\begin{prop}\label{convolconv}
\begin{itemize}
\item[(i)] Let $(u_h)_h$ and $u$ be in $L^p_{loc}(\Rn)$ and let $\Omega\subset\Rn$ be a bounded open set such that
\[
%(u_h|_\Om)_h\text{ and }\,u|_\Om\text{ are in } W^{1,p}_X(\Om)\text{ and } 
u_h\rightarrow u\text{ in }L^1_{loc}(\Om)\text{ as }h\to\infty\,.
\]
Then, for each open set $\Om'\Subset\Om$, for given $0<\,\ep<\,\dist(\Om',\Rn\setminus\Om)$,
\begin{equation}\label{convunifrhoepuh}
\rho_\ep\ast u_h\rightarrow \rho_\ep\ast u\text{ uniformly on }\Omega'\text{, as }h\to\infty\,.
\end{equation}
%and, 
%for each $j=1,\dots,m$,
%\begin{equation}\label{convunifrhoepXuh}
%\rho_\ep\ast X_ju_h\rightarrow \rho_\ep\ast X_j u\text{ uniformly on }\Omega_\ep^R\text{, as }h\to\infty\,.
%\end{equation}
\item[(ii)] Let $f:\,\Rm\to [0,\infty)$ be a convex function and let $w\in L^1_{\rm loc}(\Rn,\Rm)$. Then, for each bounded open sets $\Om'$ and $\Om$ with $\Om'\Subset\Om$, for each $0<\,\ep<\,\dist(\Om',\Rn\setminus\Om)$,
\[
\int_{\Om'}f(\rho_\ep\ast w)\,dx\le\,\int_{\Om}f( w)\,dx.
\]
\end{itemize}
\end{prop}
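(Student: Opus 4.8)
The plan is to treat the two items separately with elementary tools: a uniform bound on a convolution for (i), and Jensen's inequality together with Tonelli's theorem for (ii). Throughout, given $\Om'\Subset\Om$ and $0<\ep<\dist(\Om',\Rn\setminus\Om)$, I set $\Om'':=\{y\in\Rn:\dist(y,\Om')\le\ep\}$, which by the choice of $\ep$ is a compact subset of $\Om$; I record that $\rho_\ep(x-\cdot)$ is supported in $B(x,\ep)\subset\Om''$ whenever $x\in\Om'$, that $0\le\rho_\ep\le\ep^{-n}\norm{\rho}$, and that $\int_{\Rn}\rho_\ep\,dx=1$.

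For (i), I would estimate, for every $x\in\Om'$,
\[
\bigl|(\rho_\ep\ast u_h)(x)-(\rho_\ep\ast u)(x)\bigr|\le\int_{\Om''}\rho_\ep(x-y)\,|u_h(y)-u(y)|\,dy\le\ep^{-n}\norm{\rho}\,\|u_h-u\|_{L^1(\Om'')}\,.
\]
Since the right-hand side does not depend on $x$, taking the supremum over $x\in\Om'$ and letting $h\to\infty$ gives \eqref{convunifrhoepuh}, because $\Om''\Subset\Om$ and $u_h\to u$ in $L^1_{loc}(\Om)$.

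For (ii), for each fixed $x\in\Om'$ the Borel measure $d\mu_x(y):=\rho_\ep(x-y)\,dy$ is a probability measure on $\Rn$ supported in $\Om''$, and $w\in L^1(\mu_x)$ since $w\in L^1_{loc}(\Rn,\Rm)$ and $\rho_\ep$ is bounded with compact support. As $f:\Rm\to[0,\infty)$ is convex and finite, hence continuous and equal to the supremum of a countable family of affine functions, the vector-valued Jensen inequality yields
\[
f\bigl((\rho_\ep\ast w)(x)\bigr)=f\!\Bigl(\int_{\Rn}w\,d\mu_x\Bigr)\le\int_{\Rn}f\bigl(w(y)\bigr)\,\rho_\ep(x-y)\,dy
\]
for every $x\in\Om'$, the right-hand side possibly being $+\infty$. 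Integrating over $x\in\Om'$, applying Tonelli's theorem to the nonnegative measurable integrand, and then using $\int_{\Om'}\rho_\ep(x-y)\,dx\le\int_{\Rn}\rho_\ep(x-y)\,dx=1$ together with $\rho_\ep(x-y)=0$ for $x\in\Om'$, $y\notin\Om''$, I obtain
\[
\int_{\Om'}f(\rho_\ep\ast w)\,dx\le\int_{\Rn}f\bigl(w(y)\bigr)\Bigl(\int_{\Om'}\rho_\ep(x-y)\,dx\Bigr)dy\le\int_{\Om''}f(w)\,dy\le\int_{\Om}f(w)\,dy\,,
\]
which is the claim (the chain being trivially valid when $\int_{\Om}f(w)\,dy=+\infty$).

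I do not expect a genuine obstacle here: the only points requiring a little care are the bookkeeping of supports, so that every auxiliary integration remains inside $\Om$, and the justification of Jensen's inequality for the $\Rm$-valued map $w$ against the probability kernel $\rho_\ep(x-\cdot)$ — which is settled either by writing $f$ as a countable supremum of affine minorants and pulling the supremum outside the integral, or by quoting the vector form of Jensen's inequality directly.
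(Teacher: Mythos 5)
Your proof is correct; the paper itself does not argue these claims but simply refers to \cite[Proof of Theorem 23.1]{DM} for (i) and \cite[(23.5)]{DM} for (ii), and your argument is precisely the standard one behind those references: the uniform $L^\infty$ bound $\ep^{-n}\norma{\rho}_\infty\norma{u_h-u}_{L^1(\Om'')}$ on the difference of convolutions for (i), and Jensen's inequality for the probability kernel $\rho_\ep(x-\cdot)$ (via affine minorants of $f$) followed by Tonelli and $\int_{\Om'}\rho_\ep(x-y)\,dx\le 1$ for (ii). The support bookkeeping through $\Om''=\{y:\dist(y,\Om')\le\ep\}\Subset\Om$ is handled correctly, so nothing is missing.
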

\begin{proof}(i) See, for instance, \cite[Proof of Theorem 23.1] {DM}

\noindent(ii) See, for instance, \cite[(23.5)]{DM}.
\end{proof}

\subsection{Approximation by piecewise affine functions}
It is well known (see, for instance, \cite[Chap. X, Proposition 2.9]{ET}) that the class of piecewise affine functions is dense in the classical Sobolev space $W^{1,p}(\Om)$, provided that $\Omega$ is a bounded open set with Lipschitz boundary. This result is crucial in the proof of the classical integral representation theorem with respect to  the Euclidean gradient (see, for instance, \cite[Theorem 20.1]{DM}).
The aim of this section is to prove that no results of this kind are available for a general family $X=(X_1,\ldots, X_m)$ in $\mathbb{R}^n$, by extending, in a natural way,  the notion  to be affine with respect to the $X$-gradient.
We say that $u\in\ci^\infty(\Rn)$ is {\it $X-$affine} if there exists $c\in \mathbb{R}^n$ such that
$Xu(x)=c$ for all $x\in \mathbb{R}^n$. Let $\Omega\subset\mathbb{R}^n$ be open. We say that $u:\Omega\to \mathbb{R}$ is $X-$affine if it is the restriction to $\Omega$ of a $X-$affine function over $\mathbb{R}^n$. Moreover, we say that $u:\mathbb{R}^n\to \mathbb{R}$ is $X-$piecewise affine if it is continuous and there is a partition of $\mathbb{R}^n$ into a negligible set and a finite number of open sets on which $u$ is $X-$affine.
We prove that for Grushin and Heisenberg vector fields the approximation of functions in $W^{1,p}_X(\Omega)$ using $X-$piecewise affine functions does not hold.\\
 It is easy to see that, if $X=(X_1,X_2)$ is the Heisenberg vector field on $\bbR^3$  (see  Example \ref{exvf} (iii)), then a function $u\in\ci^\infty(\bbR^3)$ is $X-$affine if and only if  
 \begin{equation}\label{intrlinfuncheis}
 u(x)=c_1 x_1+ c_2 x_2+c_3\text{ for each }x=(x_1,x_2,x_3)\in\bbR^3\, ,
 \end{equation}
 for suitable constants $c_i\in\bbR$ $i=1,2,3$. Indeed, it is trivial that a function $u$ in \eqref{intrlinfuncheis} is $X$- affine. Conversely, if $X_1u=c_1$ and $X_2 u=c_2$ on $\bbR^3$, for some $u\in\ci^\infty(\bbR^3)$, then the commutator $[X_1,X_2]u:=\,(X_1X_2-X_2X_1)u=\partial_3 u=0$ on $\bbR^3$, which gives $u(x)=c_1 x_1+ c_2 x_2+c_3$ for each $x=(x_1,x_2,x_3)\in\bbR^3$, for some $c_3\in\mathbb{R}$.
 
%In the Heisenberg group there is a natural notion of affine functions (\cite{Ma}), namely $u:\mathbb{H}^1\to \mathbb{R}$ is intrinsic affine if there exist $(v_1,v_2)\in \mathbb{R}^2$ and $c\in\mathbb{R}$ such that $u(x,y,t)=v_1 x+ v_2 y+c$. Clearly, any such function is $X-$affine. 
%
 Let $u(x)=x_3$, then $u\in W^{1,p}_X(\Omega)$ whenever $|\Omega|<\infty$. Since any $X-$piecewise affine function does not depend on $x_3$, there cannot be any sequence of $X-$piecewise affine functions $(u_h)_h$ such that $u_h(x_1,x_2,x_3)\to u(x_1,x_2,x_3)$ for a.e. $(x_1,x_2,x_3)\in \Omega$.

Let $X=(X_1,X_2)$ be the Grushin vector fields on $\bbR^2$ (see  Example \ref{exvf} (ii)). Let $u\in\ci^\infty(\bbR^2)$ be such that $X_1u=c_1$  and $X_2u=c_2$ on $\bbR^2$. Then it is easy to prove, arguing as before, that $u(x)=c_1 x_1+c_3$ for each $x=(x_1,x_2)\in\bbR^2$, for some $c_3\in \mathbb{R}$. The conclusion follows as in the previous case taking $u(x_1,x_2)=x_2$, which belongs to $W^{1,p}_X(\Omega)$ for any $p\geq 1$ and any bounded  open set $\Omega\subset\mathbb{R}^2$.

\section{Relaxation and characterization of integral functionals depending on vector fields}

%We are going to deal with functionals $F,F_1 : \L p (\Om)\rightarrow [0,\infty]$ defined by
%
%\begin{equation}\label{F_p}
%F(u):=
%\displaystyle{\begin{cases}
%\int_{\Om}f(x,Xu(x))dx&\text{ if }u\in\Co 1(\Om)%\cap\L p(\Om)
%\\
%+\infty&\text{ otherwise}
%\end{cases}
%}
%\end{equation}
%\begin{equation}\label{F1}
%F_1(u):=
%\displaystyle{\begin{cases}
%\int_{\Om}f(x,Xu(x))dx&\text{ if }u\in W^{1,1}_{\rm loc}(\Om)%\cap\L p(\Om)
%\\
%+\infty&\text{ otherwise}
%\end{cases}
%}
%\end{equation}
%where $p\ge 1$ and  $f:\Om\times\Rm\rightarrow [0,\infty)$  is a Carath\'eodory function with
%?????
%\begin{equation}\label{3.1}
%f(x,\cdot)\text{ is a convex function on }\Rm \text{ a.e. 
%}x\in\Om\, ;
%\end{equation}
%and verifying one or both of the following conditions:
%there exist a positive constant $b$ and a non negative function $a(x)\in\L1(\Om)$ for which
%\begin{equation}\label{3.2}
%0\le f(x,\eta)\le\,  a(x)+\,b\,|\eta|^p \text{ a.e. 
%}x\in\Om,\forall\,\eta\in\Rm\,;
%\end{equation}
%there exists a positive constant $c_0$ such that
%\begin{equation}\label{3.2.1}
%c_0|\eta|^p\le f(x,\eta)\text{ a.e. 
%}x\in\Om,\forall\,\eta\in\Rm\,.
%\end{equation}
In the study of the $\Gamma-$convergence it will be helpful to consider $F$ and $F_1$  as {\it local functionals}. Namely, according to \cite[Chap. 15]{DM}, we will consider the functionals $F,\,F_1:\,L^p(\Om)\times\mA\to [0,\infty]$
\begin{equation}\label{incrFp}
F(u,A):=
\begin{cases}
 \int_{A}f(x,Xu(x))dx&\text{ if }A\in\mA \text{ and }u\in\Co 1(A)\cap\L p(A)
 \\
\infty&\text{ otherwise}
\end{cases}
\end{equation}
\begin{equation}\label{incrF1}
F_1(u,A):=
\begin{cases}
 \int_{A}f(x,Xu(x))dx&\text{ if }A\in\mA \text{ and }u\in W^{1,1}_{\rm loc}(A)\cap\L p(A)
 \\
\infty&\text{ otherwise}
\end{cases}
\end{equation}
%where $\mA$  denotes the class of all open subsets of $\Omega$. 
For future use, we denote by $\AO$ the class of all open sets compactly 
contained in $\Om$.
%%%%%%%%%%%%%%%%%%%%%%%%%%%%%%%%%%%%%%%%%%%%%%%%%%%%%%%%%%%%%%%%
\subsection{Characterization of the relaxed functional and its finiteness domain} We are going to characterize the {\it relaxed functionals of $F$} in \eqref{F_p}   and $F_1$ in \eqref{F1} with respect to the topology of $L^p(\Om)$. Let us recall that the  relaxed functional of a given functional $G:\,L^p(\Om)\to [0,\infty]$ is defined as follows (see, for instance, \cite{B}):
\begin{equation}\label{3.3}
\bar G(u):=\inf\left\{\liminf_{h\to \infty}G(u_h): (u_h)_h\subset\L p(\Om)
, u_h\to u\text{ in }\L p(\Om)\right\}\,.
\end{equation}
Then it is well known (see, for instance, \cite{B}) that $\bar G$ is the
greatest $\L p(\Om)$-lower semicontinuous functional smaller or equal to
$G$.

%and coinciding with $F$ on $\Co 1(\Om)$. Analogously, let us define $\bar F_1$ the relaxed functional of the functional $F_1$ in \eqref{F1}.
%
The relaxed functionals $\bar F$ and $\bar F_1$ can be characterized as follows:

\begin{teo}\label{Theorem 3.1.1} Let $p>1$ and let $\Om$ be an open subset of $\Rn$;
let $f:\Om\times\Rm\rightarrow [0,\infty)$ be an integrand function in $I_{m,p}(\Omega, c_1,c_0)$ with $c_1\ge\,c_0>\,0$. Then
\item{$(i)$} {\rm dom}$\,\bar F:=\left\{u\in\L p(\Om): \bar F(u)<\infty\right\}= 
\W 1 p (\Om)$ ;
\item{$(ii)$} $\bar F(u)=\int_{\Om}f(x,Xu(x))\,dx$ for every $u\in\W 1 p(\Om)$;
\item{$(iii)$} $\bar F(u)=\,\bar F_1(u)$ for each $u\in L^p(\Om)$.
\end{teo}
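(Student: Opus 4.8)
The plan is to prove the three assertions essentially together, using the classical relaxation theory for integral functionals of the Euclidean gradient together with the Meyers--Serrin-type result (Theorem~\ref{Theorem 1.2.3}) and the mollification estimate (Proposition~\ref{convolconv}(ii)).

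\textbf{Step 1: the upper bound on $\W 1 p(\Om)$.} Fix $u\in\W 1 p(\Om)$. By Theorem~\ref{Theorem 1.2.3} there is a sequence $(u_h)_h\subset\Co 1(\Om)\cap\W1p(\Om)$ with $u_h\to u$ in $\W1p(\Om)$, hence $u_h\to u$ in $L^p(\Om)$ and $Xu_h\to Xu$ in $L^p(\Om,\Rm)$. Along a subsequence $Xu_h\to Xu$ a.e. and is dominated in $L^p$; the growth bound $(I_3)$ gives $f(x,Xu_h(x))\le c_1(|Xu_h(x)|^p+1)$, and passing to a further subsequence we may invoke a generalized dominated convergence (or simply continuity of $\eta\mapsto f(x,\eta)$ from $(I_2)$ plus $L^p$-equiintegrability of $|Xu_h|^p$) to conclude $\int_\Om f(x,Xu_h)\,dx\to\int_\Om f(x,Xu)\,dx$. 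Since each $u_h\in\Co1(\Om)$, $F(u_h)=\int_\Om f(x,Xu_h)\,dx$, and therefore $\bar F(u)\le\int_\Om f(x,Xu)\,dx<\infty$ by $(I_3)$ again. This proves "$\le$" in $(ii)$ and also the inclusion ${\rm dom}\,\bar F\supseteq\W1p(\Om)$ needed for $(i)$, and the same argument applied to $F_1$ (whose admissible class contains $\Co1(\Om)\cap\W1p(\Om)$) gives $\bar F_1(u)\le\int_\Om f(x,Xu)\,dx$ as well.

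\textbf{Step 2: the lower bound and the reverse inclusion.} Let $u\in L^p(\Om)$ with $\bar F(u)<\infty$ (or $\bar F_1(u)<\infty$), and pick $(u_h)_h$ with $u_h\to u$ in $L^p(\Om)$ and $\liminf_h F(u_h)=\bar F(u)$ (resp.\ for $F_1$); discarding terms we may assume $\sup_h\int_\Om f(x,Xu_h)\,dx<\infty$ and that each $u_h$ lies in the relevant admissible class, in particular $u_h\in W^{1,1}_{\rm loc}(\Om)$ with $Xu_h\in L^1_{\rm loc}$. The coercivity $(I_3)$, $c_0|Xu_h|^p\le f(x,Xu_h)$ with $c_0>0$, bounds $(Xu_h)_h$ in $L^p(\Om,\Rm)$; up to a subsequence $Xu_h\rightharpoonup v$ weakly in $L^p(\Om,\Rm)$, and since $u_h\to u$ in $L^p$ a routine check on the distributional definition of $X$ (using $c_{ji}\in\Lip(\Om)$, so $X^T$ has $L^\infty_{\rm loc}$ coefficients) identifies $v=Xu$; hence $u\in\W1p(\Om)$. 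This proves ${\rm dom}\,\bar F\subseteq\W1p(\Om)$, completing $(i)$. For the lower bound, I would fix an open set $\Om'\Subset\Om$ and a small $\eps$, mollify: by Proposition~\ref{convolconv}(i) $\rho_\eps\ast u_h\to\rho_\eps\ast u$ uniformly (hence in $L^p$) on $\Om'$ as $h\to\infty$, while by Proposition~\ref{convolconv}(ii) applied to the convex integrand $f(x,\cdot)$—more precisely to the Euclidean integrand via \eqref{represXbyD}, $X(\rho_\eps\ast u_h)=\rho_\eps\ast(Xu_h)$ up to commutator terms coming from the non-constant $c_{ji}$—one controls $\int_{\Om'}f(\cdot,X(\rho_\eps\ast u_h))$ by $\int_\Om f(\cdot,Xu_h)$ plus an error tending to $0$ with $\eps$. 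Passing $h\to\infty$ with the $\Co1$ functions $\rho_\eps\ast u_h$ and then using $L^p$-lower semicontinuity/Fatou gives $\int_{\Om'}f(x,Xu)\,dx\le\liminf_h F(u_h)+o_\eps(1)$; letting $\eps\to0$ and then $\Om'\nearrow\Om$ yields $\int_\Om f(x,Xu)\,dx\le\bar F(u)$, and the same for $\bar F_1$.

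\textbf{Step 3: assembling $(iii)$.} Combining Steps 1--2 gives $\bar F(u)=\int_\Om f(x,Xu)\,dx=\bar F_1(u)$ for $u\in\W1p(\Om)$, and both equal $+\infty$ outside $\W1p(\Om)$ by $(i)$ (noting ${\rm dom}\,\bar F_1=\W1p(\Om)$ by the same argument, since $W^{1,1}_{\rm loc}(\Om)\cap L^p(\Om)\subset W^{1,1}_{\rm loc}(\Om)$ and $\Co1(\Om)\cap\W1p(\Om)$ is admissible for $F_1$ too). The main obstacle I anticipate is the lower-bound half of Step 2: the usual Euclidean argument mollifies and uses Jensen, but here $X(\rho_\eps\ast u_h)\ne\rho_\eps\ast(Xu_h)$ exactly because the $c_{ji}$ are only Lipschitz, so one must carefully estimate the commutator $[X,\rho_\eps\ast\,]u_h$ (which is $O(1)$ in $L^p_{\rm loc}$ but does not vanish) and absorb it using the continuity of $f(x,\cdot)$ and the uniform $L^p$ bound on $Xu_h$; alternatively one reroutes everything through the Euclidean integrand $\Lef(x,\xi)=f(x,C(x)\xi)$ and the classical lower semicontinuity theorem for $\int\Lef(x,Du)$, which is convex in $\xi$ but only Borel (not necessarily coercive) in $x$, so De Giorgi--Ioffe-type semicontinuity must be invoked with care. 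Everything else is bookkeeping with Theorem~\ref{Theorem 1.2.3} and Proposition~\ref{convolconv}.
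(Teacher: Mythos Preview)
Your overall outline is sound and, in fact, more self-contained than the paper's proof: the paper simply cites \cite[Theorem 3.3.1]{FSSC1} for (i) and (ii), and for (iii) observes that $\bar F_1\le\bar F$ trivially (since $\Co1(\Om)\subset W^{1,1}_{\rm loc}(\Om)$), then sandwiches using the $L^p$-lower semicontinuity of $F^*$ from \cite[Theorem 2.3.1]{B} together with the inclusion ${\rm dom}\,F_1\subset W^{1,p}_X(\Om)$ coming from $(I_3)$, so that $F^*\le\bar F_1\le\bar F=F^*$.

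The one place where your argument is genuinely more laborious than necessary is the lower bound in Step~2. Both routes you propose have real obstacles: Proposition~\ref{convolconv}(ii) is stated only for $x$-independent convex $f$, so the Jensen step does not apply directly to $f(x,\cdot)$, and the commutator $X(\rho_\eps\ast u_h)-\rho_\eps\ast(Xu_h)$ does not go to zero as $h\to\infty$ at fixed $\eps$; meanwhile the Euclidean reroute through $\Lef(x,\xi)=f(x,C(x)\xi)$ fails because you have no control on $Du_h$ in any $L^q$ (only on $Xu_h=C(x)Du_h$), so classical De~Giorgi--Ioffe semicontinuity for $\int\Lef(x,Du)$ is not available, and indeed the limit $u$ need not lie in $W^{1,1}_{\rm loc}(\Om)$ at all.

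The clean way out---which is exactly what the paper's citation of \cite{B} encodes---is to bypass mollification entirely. You have already shown $Xu_h\rightharpoonup Xu$ weakly in $L^p(\Om,\Rm)$. Now observe that the map $w\mapsto\int_\Om f(x,w(x))\,dx$ on $L^p(\Om,\Rm)$ is convex (by $(I_2)$) and strongly continuous (Carath\'eodory integrand with the growth bound $(I_3)$), hence weakly lower semicontinuous. This immediately gives $\int_\Om f(x,Xu)\,dx\le\liminf_h\int_\Om f(x,Xu_h)\,dx$, which is the missing inequality for both $\bar F$ and $\bar F_1$. Your Steps~1 and~3 are then fine as written.
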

\begin{proof} 
Claims  (i) and (ii) are proved in  \cite[Theorem 3.3.1]{FSSC1}. Let us prove (iii). Let $u\in L^p(\Omega)$ and $(u_h)_{h}\subset \ci^1(\Omega)\cap L^p(\Omega)$ with $u_h\to u$ in $L^p(\Omega)$. Since, in particular, $(u_h)_{h}\subset W^{1,1}_{loc}(\Omega)\cap L^p(\Omega)$ we get
\begin{align}
\bar{F}_1(u)\leq \liminf_{h\to\infty} \bar{F}_1(u_h)=\liminf_{h\to\infty}\int_{\Omega} f(x,Xu_h)\, dx=\liminf_{h\to\infty} F(u_h)
\end{align}
which implies 
\begin{align}\label{sxc}
\bar{F}_1(u)\leq \bar{F}(u).
\end{align}
Let  $F^*:\, L^p(\Omega)\to [0,\infty]$ denote the functional in \eqref{Fstar}.
By \cite[Theorem 2.3.1]{B}, $F^*$ is $L^p(\Om)$-lower semicontinuous. 
Let $u\in {\rm dom}(F_1):=\{v\in W^{1,1}_{loc}(\Omega)\cap L^p(\Omega)\ |\ F_1(u)<\infty\}$, then, by ($I_3$), we have
\[
c_0\int_{\Omega}|Xu|^p\, dx\leq  F_1(u)<\infty,
\]
thus  $u\in W^{1,p}_{X}(\Omega)$ and ${\rm dom}\,  F_1\subset W^{1,p}_{X}(\Omega)$. This implies $F^*\leq F_1$ on $L^p(\Omega)$ and consequently $F^*\leq \bar{F}_1$ on $L^p(\Omega)$. Using \eqref{sxc} and (ii) we conclude
\[
F^*\leq \bar{F}_1\leq \bar{F}=F^*\quad \mbox{on}\quad  L^p(\Omega)
\]
which completes the proof.

%Conversely, if $u\in W^{1,p}_X(\Omega)$, there exists $(u_n)_{n\in\mathbb{N}}\subset C^1(\Omega)\cap W^{1,p}_X(\Omega)$ such that $u_n\to u$ in $W^{1,p}_X(\Omega)$. Since, in particular, $(u_n)_{n\in\mathbb{N}}\subset W^{1,1}_{loc}(\Omega)\cap W^{1,p}_X(\Omega)$ and by the semicontinuity of $\bar{F}_1$ and \eqref{3.2} we get
%\begin{align*}
%\bar{F}_1(u)\leq \liminf_{n\to\infty} \int_{\Omega} f(x,Xu_n)\, dx<\infty
%\end{align*}
%and ${\rm dom}\,\bar F_1= W^{1,p}_X(\Omega)$.
\end{proof}
When $p=1$ the domain of relaxed functional $\bar F$ gives rise to the space of functions of bounded variation  function associated to $X$, $BV_X(\Omega)$ (see \cite[Theorem 3.2.3]{FSSC1}).

%%%%%%%%%%%%%%%%%%%%%%%%%%%%%%%%%%%%%%%%%%%%%%%%%%%%%%%%%
\subsection{A characterization of functionals depending on vector fields} We are going to study  when a local functional $F:\,\Co1(\Om)\times\mA\to [0,\infty]$
 can be equivalently represented both with respect to  a family of vector fields $X$ and the Euclidean gradient $D$.
 
 We already stressed that the functional $F$ in \eqref{F_p} can be always represented with respect to  the Euclidean gradient on $\Co1(\Om)$ by means of the Euclidean integrand \eqref{Li}. 
% Indeed, let us define what we will call {\it Euclidean integrand} for the functional $F$ in \eqref{F_p}, that is the function
%\begin{equation}
%\Lef (x,\xi):=\,f\left(x,C(x)\,\xi\right)\quad x\in\Omega,\,\xi\in\Rn\,.
%\end{equation}

Then, it is clear that, for each $A\in\mA$ and  $u\in\Co1(A)$,
\begin{equation}\label{agrfunct}
\begin{split}
F(u,A)&=\int_A f(x,Xu)\,dx=\,\int_A f(x,C(x)Du)\,dx\\
&=\,\int_A \Lef(x,Du)\,dx\,.
\end{split}
\end{equation}
Viceversa, we are going to study when, given a $X$-gradient and  a functional $F:\,\Co1(\Om)\times\mA\to [0,\infty]$
\begin{equation}\label{mfonC1}
F(u,A)=\int_A \Lef(x,Du)\,dx\quad u\in\Co1(A)\,,
\end{equation}
there exist a function $f:\Om\times\Rm\rightarrow [0,\infty]$ such that
\[
F(u,A)=\int_A f(x,Xu)\,dx.
\]
Let us begin with some preliminaries of linear algebra.

In the following, we identify the space of real matrices of order $m\times n$ with $\bbR^{mn}$ or $\mathcal L(\Rm,\Rn) $, where $\mathcal L(\Rm,\Rn)$ denotes the class of linear maps from $\Rm$ to $\Rn$ endowed with its operator norm. Given a matrix $A=[a_{ij}]$ of order $m\times n$ its {\it operator norm} is defined as
\[
\|A\|:=\,\sup_{|z|=1}|Az|
\]
and its {\it Hilbert-Schmidt norm} as
\[
\|A\|_{\bbR^{mn}}:=\,\sqrt{\sum_{i,j} a^2_{ij}}
\]
(see \cite[Chap. 7]{La}). Since the norms are equivalent, we can also identify the spaces
\begin{equation}
\Co0(\Om_X,\bbR^{mn})\equiv\Co0(\Om_X,\mathcal L(\Rm,\Rn))\,,
\end{equation}
where we recall that $\Om_X=\Omega\setminus \mathcal{N}_X$.
For each $x\in\Om$, let $L_x:\,\Rn\to\Rm$ be the linear map
\begin{equation}\label{Lx}
L_x(v):=\,C(x)v\text{ if }v\in\Rn\,
\end{equation}
where $C(x)$ denotes the matrix in \eqref{coeffmatrvf}.
Let $N_x$ and $V_x$ respectively denote the subspaces of  $\Rn$ defined as
%\begin{equation}\label{N&Vx}
%N_x:={\rm ker}(L_x),\quad V_x:=\,L_x^{-1}(\Rm)\,.
%\end{equation}
\begin{equation}\label{N&Vx}
N_x:={\rm ker}(L_x),\quad V_x:=\,\{C(x)^T z\ :\ z\in\mathbb{R}^m\}.
\end{equation}
It is well-known that $N_x$ and $V_x$ are orthogonal complements in $\Rn$, that is
\begin{equation}
\Rn=N_x\oplus V_x\,.
\end{equation}
Moreover, for each $x\in\Om$ and $\xi\in\Rn$, let us define $\xi_{N_x}\in N_x$ and $\xi_{V_x}\in V_x$ as the unique vectors of $\Rn$ such that
\begin{equation}\label{splitting}
\xi=\,\xi_{N_x}+\,\xi_{V_x}
\end{equation}
 and let $\Pi_x:\,\Rn\to V_x\subset\Rn$ be the projection
 \begin{equation}\label{Pix}
 \Pi_x(\xi):=\,\xi_{V_x}.
 \end{equation}
\begin{prop}\label{decompRnbyVx} Assume that the family $X$ of vector fields satisfies (LIC) on $\Omega$. Let $C(x)$ be the matrix in \eqref{coeffmatrvf} and $L_x$  be the map in \eqref{Lx}. Then $L_x:\,V_x\to\Rm$ is invertible and the map $L^{-1}:\,\Om_X\to \mathcal L(\Rm,\Rn)$ defined as
\begin{equation}\label{L-1}
L^{-1}(x):=\,L^{-1}_x\ \text{if }x\in\Om_X
\end{equation}
belongs to $\Co0(\Om_X,\mathcal L(\Rm,\Rn))$.
\end{prop}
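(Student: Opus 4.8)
The plan is to analyze the restriction $L_x|_{V_x}$ pointwise first, and then upgrade to continuity by an explicit formula. For the pointwise statement, fix $x\in\Omega_X$. Since $V_x=\{C(x)^Tz:z\in\mathbb{R}^m\}$ is the row space of $C(x)$ and (LIC) guarantees that $X_1(x),\dots,X_m(x)$ — the rows of $C(x)$ — are linearly independent, $C(x)$ has rank $m$, hence $\dim V_x=m=\dim\mathbb{R}^m$. The map $L_x:V_x\to\mathbb{R}^m$ is surjective: indeed $L_x(\mathbb{R}^n)=\mathbb{R}^m$ by the rank condition, and $L_x$ vanishes on $N_x=\ker L_x$, which is the orthogonal complement of $V_x$, so $L_x(V_x)=L_x(N_x\oplus V_x)=L_x(\mathbb{R}^n)=\mathbb{R}^m$. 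A surjective linear map between spaces of the same finite dimension is an isomorphism, so $L_x|_{V_x}$ is invertible; this justifies the notation $L_x^{-1}:\mathbb{R}^m\to V_x\subset\mathbb{R}^n$.

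For the continuity of $x\mapsto L^{-1}(x)$ on $\Omega_X$, the cleanest route is to produce an explicit formula in terms of $C(x)$ and then invoke continuity of matrix operations. First I would observe that $L_x^{-1}=\Pi_x\circ(\text{any right inverse of }L_x)$, but more usefully I would exhibit $L_x^{-1}$ directly. Since $V_x$ is the column space of $C(x)^T$, write $L_x^{-1}(\eta)=C(x)^T w$ for the unique $w\in\mathbb{R}^m$ with $L_x(C(x)^Tw)=\eta$, i.e. $C(x)C(x)^Tw=\eta$. Because $C(x)$ has rank $m$, the Gram matrix $G(x):=C(x)C(x)^T$ is a symmetric positive-definite $m\times m$ matrix, hence invertible, and we get the formula
\[
L^{-1}(x)=C(x)^T\big(C(x)C(x)^T\big)^{-1}\qquad\text{for }x\in\Omega_X,
\]
which is exactly the Moore–Penrose pseudoinverse of $C(x)$. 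One checks $L_x(L^{-1}(x)\eta)=C(x)C(x)^T G(x)^{-1}\eta=\eta$ and that $L^{-1}(x)\eta\in V_x$, so this indeed represents $L_x^{-1}$. Now $x\mapsto C(x)$ is continuous on $\Omega$ (the $c_{ji}$ are Lipschitz), hence so are $x\mapsto C(x)^T$ and $x\mapsto G(x)$; and on $\Omega_X$ we have $\det G(x)\neq 0$, so $x\mapsto G(x)^{-1}$ is continuous there by Cramer's rule (entries are polynomials in the entries of $G(x)$ divided by the nonvanishing $\det G(x)$). Therefore the product $C(x)^TG(x)^{-1}$ is continuous on $\Omega_X$, i.e. $L^{-1}\in\Co0(\Omega_X,\mathcal L(\mathbb{R}^m,\mathbb{R}^n))$, using the identification of norms recalled before the statement.

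The main obstacle, such as it is, is not any single hard estimate but making sure the identifications are clean: one must verify that the pseudoinverse formula really lands in $V_x$ and really inverts $L_x|_{V_x}$ (as opposed to being merely a right inverse of $L_x$ on all of $\mathbb{R}^n$), and that rank $m$ — which is where (LIC) enters — is precisely what makes $G(x)$ invertible. Once the formula $L^{-1}(x)=C(x)^T(C(x)C(x)^T)^{-1}$ is in place, continuity is an immediate consequence of continuity of $x\mapsto C(x)$ together with the fact that matrix inversion is continuous on the open set of invertible matrices, and the equivalence of the operator and Hilbert–Schmidt norms ensures the statement is topology-independent.
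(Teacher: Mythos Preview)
Your proof is correct and follows essentially the same route as the paper: both derive the explicit formula $L_x^{-1}=C(x)^T\big(C(x)C(x)^T\big)^{-1}$ (the paper writes $B(x):=C(x)C(x)^T$ for your $G(x)$) and deduce continuity from Cramer's rule applied to $B(x)^{-1}$. The only organizational difference is that the paper packages the rank and invertibility facts about $B(x)$ into a separate preparatory lemma, while you fold them directly into the argument.
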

Before giving the proof of Proposition \ref{decompRnbyVx}, let us prove a preliminary technical lemma.
\begin{lem}\label{helplemma}Under the same assumptions of Proposition \ref{decompRnbyVx}, 
\begin{itemize}
\item[(i)] $\dim V_x=\,m$ for each $x\in\Omega_X$ and $L_x(V_x)={\rm range}(L_x)=\,\Rm$
where ${\rm range}(L_x)$ denotes the range of $L_x$, that is, ${\rm range}(L_x):=\,\{L_x(v):\,v\in\Rn\}$. In particular $L_x:\,V_x\to\Rm$ is an isomorphism.

\item [(ii)] Let
\begin{equation}\label{matrixB}
B(x):=\,C(x)\,C^T(x)\quad x\in\Om\,.
\end{equation}
Then, for each $x\in\Om_X$, $B(x)$ is a symmetric invertible matrix of order $m$. Moreover the map $B^{-1}:\,\Om_X\to\mathcal L(\Rm,\Rm)$, defined as
\begin{equation}\label{L1}
B^{-1}(x)(z):=\,B(x)^{-1}z\quad\text{if }z\in\Rm\,,
\end{equation}
 is continuous.
%\item [(iii)] For each $x\in\Om_X$,
%\[
%V_x=\left\{C^T(x)z:\,z\in\Rm\right\}\,.
%\]
\item [(iii)] For each $x\in\Om_X$, the projection $\Pi_x$ in \eqref{Pix} can be represented as
\[
\Pi_x(\xi)=\,\xi_{V_x}=\,C(x)^TB(x)^{-1}C(x)\,\xi,\quad\forall\,\xi\in\Rn\,.
\]
 If $m=n$, then, $\Pi_x=\,{\rm Id}_n:\,\Rn\to\Rn$, the identity map in $\Rn$.
\end{itemize}
\end{lem}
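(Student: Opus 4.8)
The plan is to prove the three claims of Lemma~\ref{helplemma} in order, since each relies on the preceding one. For (i), fix $x\in\Om_X$; by the (LIC) condition the rows $X_1(x),\dots,X_m(x)$ of $C(x)$ are linearly independent in $\Rn$, so $C(x)$ has rank $m$. Consequently $L_x:\Rn\to\Rm$ is surjective, hence $\dim N_x=n-m$ and $\dim V_x=m$ (recall $V_x=N_x^\perp$ and $\Rn=N_x\oplus V_x$). Since $L_x$ vanishes on $N_x$, we get $L_x(V_x)=L_x(\Rn)={\rm range}(L_x)=\Rm$; combined with $\dim V_x=m=\dim\Rm$ this shows $L_x|_{V_x}:V_x\to\Rm$ is a bijective linear map, i.e.\ an isomorphism. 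I would note here that $V_x=\{C(x)^Tz:z\in\Rm\}$ is exactly the row space of $C(x)$, which is the standard orthogonal complement of $\ker(L_x)$.

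For (ii), observe that $B(x)=C(x)C^T(x)$ is manifestly symmetric and, for $x\in\Om_X$, positive definite: for $z\in\Rm$, $\langle B(x)z,z\rangle=|C^T(x)z|^2\geq 0$, with equality forcing $C^T(x)z=0$, i.e.\ $z$ orthogonal to every row of $C(x)$; since those rows span $\Rm$ this gives $z=0$. Hence $B(x)$ is invertible of order $m$. For continuity of $B^{-1}$ on $\Om_X$: the entries $c_{ji}$ are (locally Lipschitz, hence) continuous, so $x\mapsto B(x)$ is continuous into $\mathcal L(\Rm,\Rm)$, $x\mapsto\det B(x)$ is continuous and nonvanishing on $\Om_X$, and by Cramer's rule $B(x)^{-1}=\frac{1}{\det B(x)}\,{\rm adj}\,B(x)$ depends continuously on $x$; equivalently, matrix inversion is continuous on the open set of invertible matrices.

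For (iii), I would verify directly that $P(x):=C(x)^TB(x)^{-1}C(x)$ is the orthogonal projection of $\Rn$ onto $V_x$. First, ${\rm range}(P(x))\subseteq V_x$ since $P(x)\xi=C(x)^T\big(B(x)^{-1}C(x)\xi\big)$ is of the form $C(x)^Tz$. Second, $P(x)$ is idempotent: $P(x)^2=C(x)^TB(x)^{-1}\big(C(x)C(x)^T\big)B(x)^{-1}C(x)=C(x)^TB(x)^{-1}B(x)B(x)^{-1}C(x)=C(x)^TB(x)^{-1}C(x)=P(x)$. Third, $P(x)$ is symmetric since $B(x)^{-1}$ is symmetric, so it is the orthogonal projection onto its range; and if $\xi=C(x)^Tw\in V_x$ then $P(x)\xi=C(x)^TB(x)^{-1}C(x)C(x)^Tw=C(x)^TB(x)^{-1}B(x)w=C(x)^Tw=\xi$, so the range is all of $V_x$. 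By uniqueness of the decomposition~\eqref{splitting}, $P(x)\xi=\xi_{V_x}=\Pi_x(\xi)$. Finally, when $m=n$ the matrix $C(x)$ is itself invertible on $\Om_X$, so $N_x=\{0\}$, $V_x=\Rn$, and $C(x)^TB(x)^{-1}C(x)=C(x)^T(C(x)^T)^{-1}C(x)^{-1}C(x)={\rm Id}_n$.

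I do not expect a serious obstacle here: this is a linear-algebra lemma whose only subtlety is keeping careful track of the identification of $V_x$ with the row space of $C(x)$ and invoking (LIC) at exactly the point where full rank is needed. The mildest care is required in the continuity statements of (ii) (and, implicitly, of $L^{-1}$ in the subsequent Proposition~\ref{decompRnbyVx}, whose formula $L^{-1}_x=C(x)^TB(x)^{-1}$ on $V_x$ will follow immediately from parts (i)--(iii)), where one must note that $\Om_X$ is where $\det B$ stays away from zero so that Cramer's rule applies pointwise and yields a continuous inverse.
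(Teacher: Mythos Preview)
Your proposal is correct and follows essentially the same approach as the paper: parts (i) and (ii) are virtually identical (rank argument from (LIC), positive-definiteness of $B(x)$ via $\langle B(x)z,z\rangle=|C(x)^Tz|^2$, and Cramer's rule for continuity of $B^{-1}$). For (iii) the paper reaches the same formula slightly more directly by writing $\xi_{V_x}=C(x)^Tw$ for a unique $w\in\Rm$ and applying $L_x$ to the decomposition $\xi=\xi_{N_x}+\xi_{V_x}$ to obtain $C(x)\xi=B(x)w$, hence $w=B(x)^{-1}C(x)\xi$; your verification that $C(x)^TB(x)^{-1}C(x)$ is symmetric, idempotent, and restricts to the identity on $V_x$ is an equally valid and equally elementary alternative.
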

\begin{oss} Using the definition of $V_x$ it is easy to see that
\[
V_x=\,{\rm span}_{\bbR} \left\{X_1(x),\dots,X_m(x)\right\}\,,
\]
i.e., the so-called {\it horizontal bundle}, denoted also by $H_x$.
\end{oss}

\begin{proof}(i) The claim is a well-known result of basic linear algebra.

(ii)  It is straightforward that $B(x)$ a symmetric matrix of order $m$ for each $x\in\Om$. We have only to show that it is invertible for each $x\in\Om_X$ or, equivalently, that
\begin{equation}\label{helplemma1}
\text{if $B(x)z=\,0$ for some $z\in\Rm$, then $z=\,0$.}
\end{equation}
Let $z^T$ denotes the transpose of a column vector $z\in\Rm$. If $B(x)z=\,0$, then
\begin{equation}\label{helplemma2}
\begin{split}
0=\,z^TB(x)z&=z^TC(x)C^T(x)z\\
&=\,\left|C^T(x)z\right|^2_{\Rm}\quad\iff\quad C^T(x)z=\,0\,.
\end{split}
\end{equation}

By (LIC), since 
\[
{\rm rank }\,C( x)=\,{\rm rank }\,C^T( x)=\,m\quad\forall x\in\Om_X\,,
\]
from \eqref{helplemma2} we get that $z=0$ and \eqref{helplemma1} follows. Let us now prove that the map \eqref{L1} is continuous.
Let us recall that, given  a matrix $A\in\Co0(\Om_X,\bbR^{m^2})$, by the definition of determinant (see, for instance, \cite[Chap.3,Theorem 6]{La}), the determinant map 
\[
\det A:\,\Om_X\to\bbR,\quad(\det A)(x):=\,\det(A(x))
\]
is continuous. Moreover 
\[
\text{$A(x)$ is invertible}\iff \det A(x)\neq\,0\,.
\]
By Cramer's rule (see, for instance, \cite[Chap.3,Theorem 7]{La}), if $B(x)^{-1}=[b_{ij}^*(x)]$, then
\[
b^*_{ij}(x)=\,(-1)^{i+j}\frac{\det B_{ij}(x)}{\det B(x)}\quad x\in\Om_X,\,i,j=1,\dots,m\,,
\]
where $B_{ij}$ is the $(m-1)\times (m-1)$ matrix obtained by striking out the $i$th row and $j$th column of $B$, i.e., the $(ij)$th minor of $B$. This implies that $B^{-1}\in\Co0(\Om_X,\bbR^{m^2})\equiv\Co0(\Om_X,\mathcal L(\Rm,\Rm))$.

%(iii) Let us fix $x\in\Om_X$ and 
% prove the inclusion

%\begin{equation}\label{inclVx}
%V_x\supset\left\{C^T(x)z:\,z\in\Rm\right\}\,.
%\end{equation}

%Let $v=C^T(x)z$ for some $z\in\Rm$. If $v\in{\rm ker}(L_x)$, then
%\[
%0=\,L_x(v)=\,C(x)v=C(x)C^T(x)z=B(x)z\,.
%\]
%Thus, from previous claim (i), we get $z=0$. This implies that $L(v)\neq\,0$ if $z\neq\,0$,  and then, by definition, \eqref{inclVx} holds. Let us now show the reverse inclusion of \eqref{inclVx}. Let $v\in V_x\setminus\{0\}$ and let $w=\,L_x(v)\in\Rm\setminus\{0\}$. By claim (i) there exists $z\in\Rm$ such that
%\[
%w=\,B(x)z =\,C(x)C^T(x)z= L_x\left(C^T(x)z\right)\,.
%\]
%Since $L_x:\,V_x\to\Rm$ is an isomorphism, it follows that $v= C^T(x)z$ and we are done.

(iii) %By the previous claim (iii), it follows that
We have
\begin{equation}\label{represPix1}
\Pi_x(\xi)=\,\xi_{V_x}=\,C(x)^T w\,
\end{equation}
for a suitable (unique) $w=w(x,\xi)\in\Rm$ depending on $x$ and $\xi$. On the other hand, by \eqref{represPix1},
\begin{equation}\label{represPix2}
\begin{split}
C(x)\xi&=\,L_x(\xi)=\,L_x(\xi_{N_x})+\,L_x(\xi_{V_x})\\
&=\,C(x)\xi_{V_x}=\,C(x)C(x)^Tw=\,B(x)w\,.
\end{split}
\end{equation}
Since $B(x)$ is invertible, by \eqref{represPix2}, we get the desired conclusion.
\end{proof}
\begin{proof}[Proof of Proposition \ref{decompRnbyVx}] The fact that the map $L_x:\,V_x\to\Rm$ is invertible follows from Lemma \ref{helplemma} (i). Let us now prove that
\begin{equation}\label{L-1repr}
L_x^{-1}(z)=\,C^T(x)B(x)^{-1}z\quad\forall\,z\in\Rm\,,
\end{equation}
where $B(x)$ is the matrix in \eqref{matrixB}. Let us fix $z\in\Rm$ and let $v=\,L_x^{-1}(z)\in V_x$. By Lemma \ref{helplemma} (iii), there exists $w\in\Rm$ such that $v=\,C^T(x)w$. Thus
\[
z=\,L_x(v)=\,C(x)C^T(x)w=\,B(x)w\,.
\]
By Lemma \ref{helplemma} (ii), it holds $w=\,B(x)^{-1}z$. Therefore we get
\begin{equation}\label{represL-1}
L_x^{-1}(z)=\,v=\,C^T(x)B(x)^{-1}z
\end{equation}
and \eqref{L-1repr} follows.  Let us define
\[
A(x):=\,C^T(x)B(x)^{-1}\quad\text{if }x\in\Om_X\,.
\]
Then, from Lemma \ref{helplemma} (ii), $A\in \Co0(\Om_X,\bbR^{mn})\equiv\Co0(\Om_X,\mathcal L(\Rm,\Rn))$. Thus, by \eqref{represL-1}, we get the desired conclusion.
%Let us define the maps $L^{(1)}:\Om\to\mathcal L(\Rm,\Rm)$ and $L^{(2)}:\Om\to\mathcal L(\Rm,\Rn)$ as
%\[
%L^{(1)}(x)(z):=\,B(x)^{-1}z,\quad L^{(2)}(x)(z):=\,C^T(x)z\quad\text{ if }x\in\Om_X,\,z\in\Rm\,.
%\]
%Then by Lemma \ref{helplemma} (ii), 
%\begin{equation}
%\text{ $L^{(i)}$ are continuous for $i=1,2$} 
%\end{equation}
%and
%\begin{equation}
%L^{-1}(x)=\,L^{(2)}(x)\circ L^{(1)}(x)\quad\forall\,x\in\Om_X\,.
%\end{equation}
\end{proof}
\begin{teo}\label{reprvfeg} Let $\Om\subset\Rn$ be an open set and assume that $X$ satisfies (LIC) on $\Omega$. Let $F:\,\Co1(\Om)\times\mA\to [0,\infty]$ be the  functional in \eqref{mfonC1} with $\Lef:\,\Om\times\Rn\to [0,\infty]$ a Borel measurable function satisfying
\begin{equation}\label{felocsom}
\text{ for each } \xi\in\Rn,\,\Lef(\cdot,\xi)\in L^1_{\rm loc}(\Om)\,
\end{equation}
%\begin{equation}\label{fePixlocsom}
%\text{ for each } \xi\in\Rn,\,\Lef(\cdot,\Pi_\cdot\,\xi)\in L^1_{\rm loc}(\Om_X)\,,
%\end{equation}

and
\begin{equation}\label{feconv}
\Lef(x,\cdot):\,\Rn\to [0,\infty)\text{ convex for a.e. }x\in\Om\,.
\end{equation}
%and \eqref{f0Vx}.
%\begin{equation}\label{f0Vx}
%\begin{split}
%\Lef(x,\xi)=\,\Lef(x, \Pi_x(\xi))\quad\forall\,\xi\in\Rn\,\mbox{and}\, \text{ a.e. }x\in\Om,
%\end{split}
%\end{equation}
%where $\{V_x:x\in\Om_X\}$ is the distribution of $m$-planes in $\Rn$ defined in Proposition \ref{decompRnbyVx} and $\Pi_x:\,\Rn\to V_x$ denotes the projection of $\Rn $  on $V_x$ in \eqref{Pix}.  
Define $f:\,\Om\times\Rm\to [0,\infty)$ as
\begin{equation}\label{deffbyfe}
f(x,\eta):=
\begin{cases}\,\Lef(x,L^{-1}(x)(\eta))&\text{ if }(x,\eta)\in\Om_X\times\Rm\\
0&\text{ otherwise}
\end{cases}
\,,
\end{equation}
where $L^{-1}:\,\Om_X\to\Lap(\Rm,\Rn)$ is the map in \eqref{L-1}.
Then, $f$ is a Borel measurable function satisfying
\begin{equation}\label{fconvex2}
f(x,\cdot):\,\Rm\to [0,\infty)\text{ convex for a.e. }x\in\Om\,.
\end{equation}
Moreover, 
\begin{equation}\label{represFwrtX}
\begin{split}
F(u,A)&= \int_A \Lef(x,Du)\,dx\\
&=\,\int_A f(x,Xu)\,dx\quad\forall A\in\mA,\,u\in \Co1(A)\,
\end{split}
\end{equation}
if and only if
\begin{equation}\label{f0Vx}
\begin{split}
\Lef(x,\xi)=\,\Lef(x, \Pi_x(\xi))\quad \mbox{for a.e.}\ x\in\Om,\ \forall\,\xi\in\Rn\,,
\end{split}
\end{equation}
where $\{V_x:x\in\Om_X\}$ is the distribution of $m$-planes in $\Rn$ defined in Proposition \ref{decompRnbyVx} and $\Pi_x:\,\Rn\to V_x$ denotes the projection of $\Rn $  on $V_x$ in \eqref{Pix}. \\
In addition, the function $f$ for which \eqref{represFwrtX} holds is unique, that is,  if there exists another Borel measurable function  $f^*:\,\Om\times\Rm\to [0,\infty)$ satisfying $f^*(x,\cdot):\,\Rm\to [0,\infty)$ convex a.e. $x\in\Om$ and \eqref{represFwrtX}  holds, then $f(x,\eta)=\,f^*(x,\eta)$ for a.e. $x\in\Om$ and $\eta\in\Rm$.
\end{teo}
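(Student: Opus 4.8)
The plan is to prove the three assertions of Theorem \ref{reprvfeg} in turn: first that $f$ as defined in \eqref{deffbyfe} is Borel measurable and convex in the second variable, then the equivalence ``\eqref{represFwrtX} $\iff$ \eqref{f0Vx}'', and finally uniqueness. For the measurability of $f$, I would note that $(x,\eta)\mapsto L^{-1}(x)(\eta)=A(x)\eta$ is continuous on $\Om_X\times\Rm$ by Proposition \ref{decompRnbyVx} (where $A(x)=C^T(x)B(x)^{-1}$), hence $(x,\eta)\mapsto \Lef(x,L^{-1}(x)(\eta))$ is Borel as a composition of a Borel map with a continuous one; extending by $0$ on the negligible set $\mathcal N_X\times\Rm$ keeps it Borel since $\mathcal N_X$ is closed. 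For convexity \eqref{fconvex2}: fix $x\in\Om_X$ outside the null set where \eqref{feconv} fails; then $\eta\mapsto f(x,\eta)=\Lef(x,A(x)\eta)$ is the composition of the linear map $\eta\mapsto A(x)\eta$ with the convex function $\Lef(x,\cdot)$, hence convex.

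For the equivalence, the key algebraic identity I would establish first is that for every $x\in\Om_X$ and every $\xi\in\Rn$,
\begin{equation*}
L^{-1}(x)\bigl(L_x(\xi)\bigr)=L^{-1}(x)\bigl(C(x)\xi\bigr)=\Pi_x(\xi),
\end{equation*}
which follows from $C(x)\xi=C(x)\xi_{V_x}=L_x(\xi_{V_x})$ and the fact that $L_x\colon V_x\to\Rm$ is invertible with inverse $L^{-1}(x)$ (Lemma \ref{helplemma}(i),(iii) and the representation $L_x^{-1}(z)=C^T(x)B(x)^{-1}z$ from the proof of Proposition \ref{decompRnbyVx}). Consequently, for $u\in\Co1(A)$ and a.e.\ $x\in A$ (so $x\in A\cap\Om_X$),
\begin{equation*}
f(x,Xu(x))=f(x,C(x)Du(x))=\Lef\bigl(x,L^{-1}(x)(C(x)Du(x))\bigr)=\Lef\bigl(x,\Pi_x(Du(x))\bigr).
\end{equation*}
Comparing with $F(u,A)=\int_A\Lef(x,Du)\,dx$, the implication \eqref{f0Vx} $\Rightarrow$ \eqref{represFwrtX} is immediate since $\Lef(x,Du)=\Lef(x,\Pi_x(Du))$ a.e.\ by hypothesis. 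For the converse \eqref{represFwrtX} $\Rightarrow$ \eqref{f0Vx}, I would use \eqref{represFwrtX} with constant-gradient test functions: for fixed $\xi\in\Rn$ take $u(x)=\scalare{\xi}{x}$, so $Du\equiv\xi$, and run this over all balls $A=B(x_0,r)\Subset\Om$. Then $\int_{B(x_0,r)}\Lef(x,\xi)\,dx=\int_{B(x_0,r)}\Lef(x,\Pi_x(\xi))\,dx$ for all such balls, and by Lebesgue differentiation (using \eqref{felocsom}), $\Lef(x,\xi)=\Lef(x,\Pi_x(\xi))$ for a.e.\ $x$; one then intersects over a countable dense set of $\xi\in\Rn$ and uses continuity of $\Lef(x,\cdot)$ (a finite convex function on $\Rn$ is continuous) to get the conclusion for all $\xi$ simultaneously a.e.

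For uniqueness, suppose $f^*$ is another Borel function, convex in $\eta$, with $\int_A f^*(x,Xu)\,dx=F(u,A)=\int_A f(x,Xu)\,dx$ for all $A\in\mA$, $u\in\Co1(A)$. Fixing $\eta\in\Rm$ and choosing $u(x)=\scalare{C^T(x_0)^{-1}\text{-type}}{\cdot}$ is awkward since $C$ varies; instead I would argue pointwise: for $x_0\in\Om_X$ and $\eta\in\Rm$, pick $\xi:=A(x_0)\eta=L_{x_0}^{-1}(\eta)\in V_{x_0}$ and take the affine function $u(x)=\scalare{\xi}{x}$, so $Xu(x)=C(x)\xi$, which at $x=x_0$ equals $\eta$. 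Over balls $B(x_0,r)$ and letting $r\to 0$, the Lebesgue differentiation theorem gives $f^*(x_0,C(x_0)\xi)=f(x_0,C(x_0)\xi)$, i.e.\ $f^*(x_0,\eta)=f(x_0,\eta)$, for a.e.\ $x_0$; running over a countable dense set of $\eta$ and invoking continuity of both $f(x_0,\cdot)$ and $f^*(x_0,\cdot)$ yields equality for all $\eta$ and a.e.\ $x_0$. I expect the main obstacle to be the careful handling of the ``for a.e.\ $x$, for all $\xi$'' quantifier order in the converse implication and in uniqueness: one cannot directly differentiate for each $\xi$ on a single full-measure set, so the countable-dense-set-plus-continuity argument must be set up cleanly, and one must make sure the null set where \eqref{feconv} or the Lebesgue point property fails is chosen independently of $\xi$. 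A secondary technical point is verifying that plugging affine (Euclidean-)linear functions $u(x)=\scalare{\xi}{x}$ is legitimate in \eqref{mfonC1}, which is fine since they lie in $\Co1(A)$ for every $A\in\mA$.
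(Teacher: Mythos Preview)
Your proposal follows the paper's proof essentially verbatim for the measurability and convexity of $f$, the key identity $L_x^{-1}(C(x)\xi)=\Pi_x(\xi)$, and both directions of the equivalence \eqref{represFwrtX} $\iff$ \eqref{f0Vx} via affine test functions $u_\xi(x)=\langle\xi,x\rangle$, Lebesgue differentiation on balls, and the countable-dense-$\xi$-plus-continuity trick. That part is correct and matches the paper.

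The one genuine gap is in your uniqueness argument. You fix $\eta\in\Rm$ and a point $x_0$, then choose $\xi:=A(x_0)\eta=L_{x_0}^{-1}(\eta)$ \emph{depending on $x_0$}, and appeal to Lebesgue differentiation at $x_0$ for the functions $y\mapsto f(y,C(y)\xi)$ and $y\mapsto f^*(y,C(y)\xi)$. But the full-measure set of Lebesgue points of these functions depends on $\xi$, which in turn depends on $x_0$: you are asking that $x_0$ be a Lebesgue point of a function that was \emph{selected using $x_0$}, so the ``for a.e.\ $x_0$'' conclusion is circular. Running over a countable dense set of $\eta$ does not cure this, since for each $\eta$ the corresponding $\xi$ still varies with $x_0$. (Equivalently: there is in general no single $u\in\Co1(A)$ with $Xu(y)\equiv\eta$ on $A$, which is what your shortcut implicitly wants.)

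The paper avoids this by parametrizing over $\xi\in\Qn$ rather than $\eta\in\Rm$: for each fixed $\xi\in\Qn$ one obtains $f(x,C(x)\xi)=f^*(x,C(x)\xi)$ for a.e.\ $x$ by Lebesgue differentiation; taking the countable union of exceptional sets and using continuity of $f(x,\cdot),f^*(x,\cdot)$ extends this to all $\xi\in\Rn$ on a single full-measure set; finally, since by (LIC) the map $L_x:\Rn\to\Rm$ is onto for $x\in\Om_X$, every $\eta\in\Rm$ is of the form $C(x)\xi$, giving $f(x,\eta)=f^*(x,\eta)$ for all $\eta$. You already execute exactly this scheme in your converse implication; simply reuse it here instead of trying to invert through $\eta$.
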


\begin{oss}\label{nouniqrepres} If the $X$-gradient does not satisfy (LIC) condition, the uniqueness of representation \eqref{represFwrtX} may trivially fail. For instance, let $X=(X_1,X_2):=(\partial_1,\,0)$ be the family of vector fields on $\Omega=\,\bbR^2$ and let $f(\eta):=\,\eta_1^2+\,g(\eta_2)$ and $f^*(\eta):=\,\eta_1^2+\,g^*(\eta_2)$ for each $\eta=(\eta_1,\eta_2)\in\bbR^2$, where $g,\,g^*:\,\bbR\to[0,\infty)$ are convex functions satisfying $g(0)=\,g^*(0)=\,0$, but $g\neq\,g^*$. Then it clear that  $f$ and $f^*$ are integrand functions of the same functional $F$ defiined in \eqref{represFwrtX} , even though $f\neq f^*$.
\end{oss}

\begin{oss}\label{Lefnotindipx} Notice that, in the case $m=n$ and $X$ satisfies (LIC) on $\Omega$, condition \eqref{f0Vx}  always holds, since, by Lemma \ref{helplemma} (iii), $\Pi_x\equiv {\rm Id}_n$. 
%Notice also  that  a Euclidean integrand must satisfy compatibility condition \eqref{f0Vx}. This implies that, if $m<\,n$, since $N_x\neq \{0\}$, $\Pi_x$ must depend on $x$ for each $x\in\Omega_X$.Thus, in this case, a Euclidean integrand cannot be independent of $x$. 
\end{oss}

\begin{proof}\noindent {\bf 1st step.} Let us prove that $f$ is Borel measurable.
Let $\Psi:\,\Omega_X\times\Rm\to\Omega_X\times\Rn$ denote the map 
\[
\Psi(x,\eta): =(x,L^{-1}(x)(\eta))\quad\text{if }(x,\eta)\in\Om_X\times\Rm.
\]
By Proposition \ref{decompRnbyVx}, $\Psi$ is continuous, then it is also Borel measurable.  Since $\Lef:\,\Omega\times\Rn\to [0,\infty]$ is Borel measurable, the composition $f\,=\,\Lef\circ\Psi:\Omega_X\times\Rm \to [0,\infty]$ is still Borel measurable.

To prove \eqref{fconvex2} it is sufficient to notice that
\[
f(x,\cdot)=\,\Lef(x,\cdot)\circ L^{-1}(x)\quad\forall\,x\in\Om_X\,
\]
indeed $\Lef(x,\cdot):\,\Rn\to [0,\infty)$ is convex for a.e. $x\in\Om$ and $L^{-1}(x):\,\Rm\to\Rn$ is linear for $x\in\Om_X$.\\

{\bf 2nd step.} Let us prove the uniqueness of representation in \eqref{represFwrtX}. Assume that
\begin{equation}\label{eqffstarfe}
\begin{split}
\int_A f(x,Xu)\,dx&=\,\int_A f^*(x,Xu)\,dx\\
&=\,\int_A \Lef(x,Du)\,dx\quad\forall\,u\in \Co1(A),\,A\in\mA\,
\end{split}
\end{equation}
for given  Borel measurable functions  $f,\,f^*:\,\Om\times\Rm\to [0,\infty)$, with $f(x,\cdot),f^*(x,\cdot):\,\Rm\to [0,\infty)$ convex a.e. $x\in\Om$.
Let us choose as 
\begin{equation}\label{uxi}
u(x)=\,u_\xi(x):=\langle\xi,x\rangle\quad x\in\Rn\,,
\end{equation} 
for fixed $\xi\in\bbQ^n$, in the previous equality.
%and let us define the Borel measures on $\mB(\Omega)$
%\[
%\begin{split}
%&\mu(A):=\,\int_A f(x,C(x)\,\xi)\,dx,\,\mu^*(A):=\,\int_A f^*(x,C(x)\,\xi)\,dx,
%\\&\,\mu_e(A):=\,\int_A f_e(x,\xi)\,dx\text{ if }A\in\mB(\Omega)\,.
%\end{split}
%\] 
By \eqref{eqffstarfe} and \eqref{felocsom}, it follows that  the functions
\[
\Omega\ni y\mapsto f(y,C(y)\xi)\text{ and }\Omega\ni y\mapsto f^*(y,C(y)\xi)\text{ are in }L^1_{\rm loc}(\Om)\,.
\]
%they are actually Radon measures on $\Om$ and they agree on  $\mA$.
%\[
%\mA_X:=\,\left\{A\in\mA:\,A\subset\Omega_X\right\}\,.
%\] 
%By a well-known result of approximation for Radon measures by open sets, we can infer that they agree on the whole $\mB(\Omega)$. 
Choosing $A=B(x,r)$ in \eqref{eqffstarfe}, by Lebesgue's differentiation theorem, 
we get  that there exists a negligible set $\mathcal N_\xi\subset\Om$ such that $\forall\,x\in\Om\setminus\mathcal N_\xi$
\begin{equation}\label{coincffstar}
\begin{split}
f(x,L_x(\xi))&=\,f(x,C(x)\xi)=\,f^*(x,C(x)\xi)\\
&=\,f^*(x,L_x(\xi))\,.
\end{split}
\end{equation}
If $\mathcal N:=\,\cup_{\xi\in\bbQ^n}\mathcal N_\xi$, then  \eqref{coincffstar} holds for each $x\in\Om\setminus\mathcal N$ and $\xi\in\bbQ^n$.  Since, for each $x\in\Om\setminus\mathcal N$, $f(x,\cdot),f^*(x,\cdot):\,\Rm\to[0,\infty)$ are continuous, it follows that  \eqref{coincffstar}  holds for each $x\in\Om\setminus\mathcal N$ and $\xi\in\Rn$.
Being the map $L_x:\,\Rn\to\Rm$ onto, we get the desired conclusion.\\

\noindent {\bf 3nd step.} Let us assume \eqref{f0Vx}. To prove \eqref{represFwrtX} it is sufficient to prove that, for each $A\in\mA$, $u\in\Co1(A)$
\begin{equation}\label{f=f0}
f(x,Xu(x))=\,\Lef(x,Du(x))\quad\text{a.e. }x\in\Om\,.
\end{equation}
Given $A\in\mA$ and $u\in\Co1(A)$, let us recall that
\[
Xu(x)=\,C(x)Du(x)\quad\forall\,x\in A\,.
\]
Thus, by \eqref{f0Vx}, Lemma \ref{helplemma} (iii) and the definition of $V_x$ , a.e. $x\in\Om$, if $v_x:=Du(x)$
\begin{equation}\label{ifffe}
\begin{split}
f(x, Xu(x))&=\,f(x,C(x)v_x)=\,f(x,L_x(\Pi_x(v_x)))\\
&=\,\Lef(x,L_x^{-1}(L_x(\Pi_x(v_x)))=\,\Lef(x,\Pi_x(v_x))\\
&=\,\Lef(x,v_x)=\,\Lef(x,Du(x))\,
\end{split}
\end{equation}
and \eqref{f=f0} follows. On the other hand, let us assume that for every $A\in\mA$ and $u\in \Co1(A)$ 
\[
\int_A \Lef(x,Du)\,dx=\,\int_A f(x,Xu)\,dx
\]
where $f$ is the function in \eqref{deffbyfe}.
By \eqref{ifffe}, for every $A\in\mA$ and $u\in\Co1 (A)$,
\[
f(x,Xu(x))=\,\Lef(x,\Pi_x(Du(x)))\quad\forall\,x\in A\,,
\]
which implies
\[
\int_A f(x,Xu(x))\,dx=\, \int_A \Lef(x,\Pi_x(Du(x)))\,dx\,.
\]
Thus, for every $A\in\mA$ and $u\in \Co1(A)$,
\[
\int_A \Lef(x,\Pi_x (Du(x)))\,dx=\int_A \Lef(x,Du)\,dx
\]
and the conclusion now follows by proceeding as in the second step of the proof.
\end{proof}
\begin{oss}\label{represFwrtXW1p} Observe that \eqref{f=f0} actually holds for each $u\in W^{1,p}(A)$. As a consequence, \eqref{represFwrtX} holds for each $A\in\mA$ and $u\in W^{1,p}(A)$.
\end{oss}

\subsection{Integral representation for local functionals with respect to vector fields}
Let us recall, for reader's convenience, some notation about set functions on $\mA$ and  local functionals on $L^p(\Om)\times\mA$, according to \cite{DM}. Let $\Om\subset\Rn$ be an open set. 
\begin{defi} Let $\alpha:\,\mA\to [0,\infty]$ be a set function.
We say that:
\begin{itemize}
\item[(i)] $\alpha$ is {\it increasing} if $\alpha(A)\le\, \alpha(B)$, for each $A,\,B\in\mA$ with $A\subseteq B$;
\item[(ii)] $\alpha$ is {\it inner regular } if
\[
\alpha(A)=\,\sup\left\{\alpha(B):\,B\in\mA,\,B\Subset A\right\}\,\text{for each}\ A\in\mA;
\]
%\item[(iii)] $\alpha$ is {\it outer regular } if
%\[
%\alpha(A)=\,\inf\left\{\alpha(B):\,B\in\mA,\,A\Subset B\right\}\,\text{for each}\ A\in\mA,
%\]
%with the usual convention that $\inf\emptyset=\,\infty$;
\item[(iii)] $\alpha$ is {\it subadditive }if $\alpha(A)\le\,\alpha(A_1)+\alpha(A_2)$ for every $A,\,A_1,\,A_2\in\mA$ with $A\subset A_1\cup A_2$;
\item[(iv)] $\alpha$ is {\it superadditive }if $\alpha(A)\ge\,\alpha(A_1)+\alpha(A_2)$ for every $A,\,A_1,\,A_2\in\mA$ with $A_1\cup A_2\subseteq A$ and $A_1\cap A_2=\emptyset$;
\item[(v)] $\alpha$ is a {\it measure }if there exists a Borel measure $\mu:\,\mathcal B(\Om)\to [0,\infty]$ such that $\alpha(A)=\,\mu(A)$ for every $A\in\mA$.
\end{itemize}
\end{defi}
\begin{oss}\label{charmeas}Let us recall that, if  $\alpha:\,\mA\to [0,\infty]$ is an increasing  set function, then it is a measure if and only if it is subadditive, superadditive and inner regular (see \cite[Theorem 14.23]{DM}).
\end{oss}
\begin{defi}Let
\[
F:\,L^p(\Om)\times\mA\to [0,\infty]\,.
\]
We say that:
\begin{itemize}
\item[(i)] $F$ is {\it increasing} if, for every $u\in L^p(\Om)$, $F(u,\cdot):\,\mA\to [0,\infty]$ is increasing as set function;
\item[(i)] $F$ is {\it inner regular} (on $\mA$) if it is increasing and, for each $u\in L^p(\Om)$, $F(u,\cdot):\,\mA\to [0,\infty]$ is innner regular as set function;
\item[(iii)] $F$ is a {\it measure}, if for every $u\in L^p(\Om)$, $F(u,\cdot):\,\mA\to [0,\infty]$ is a measure as set function;
\item[(iv)] $F$ is {\it local} if
\[
F(u,A)=\,F(v,A)\,
\]
for each $A\in\mA$, $u,v\in L^p(\Om)$ such that $u=v$ a.e. on $A$;
\item[(v)] $F$ is {\it lower semicontinuous} (lsc), if for every $A\in\mA$, $F(\cdot,A):\,L^p(\Om)\to [0,\infty]$ is lower semicontinuous.
\end{itemize}
\end{defi}
\begin{teo}\label{DMThm201ext} Let $\Om\subset\Rn$ be a bounded open set and assume that $X$ satisfies (LIC) on $\Omega$. Let $p>\,1$ and 
\[
F:\,L^p(\Om)\times\mA\to [0,\infty]
\]
be an increasing functional satisfying the following properties:
\begin{itemize}
\item[(a)] $F$ is local;
\item[(b)] $F$ is a measure;
\item[(c)] $F$ is lsc;
\item[(d)] $F(u+c,A)=\,F(u,A)$ for each $u\in L^p(\Om)$, $A\in\mA$ and $c\in\bbR$;
\item[(e)] there exist a non negative function $a\in L^{1}_{\rm loc}(\Omega)$ and a positive constant $b$ such that
\[
0\le\,F(u,A)\le\,\int_A\left(a(x)+b\,|Xu(x)|^p\right)\,dx
\]
for each $u\in\Co1(A)$, $A\in\mA$.
%for each $u\in W^{1,p}(A)$, $A\in\mA$.
%\item[(f)] for a.e. $x\in\Om$, there exist
%\[
%\lim_{r\to 0^+}\frac{F(u_\xi,B(x,r))}{|B(x ,r)|}=\, \lim_{r\to 0^+}\frac{F(u_{\xi_{V_x}},B(x,r))}{|B(x ,r)|}\text{ for each }\xi\in\Rn\,,
%\]
%where $u_\xi$ is the function in \eqref{uxi} and $\xi_{V_x}=\,\Pi_x(\xi)$ is the vector in \eqref{Pix}. 
\end{itemize}
Then, there exists a Borel function $f:\,\Omega\times\Rm\to [0,\infty]$ such that:
\begin{itemize}
\item[(i)] for each $u\in L^p(\Om)$, for each $A\in\mA$ with $u|_A\in W^{1,p}_{X;{\rm loc}}(A)$, we have
\[
F(u,A)=\,\int_A f(x,Xu(x))\,dx\,;
\]
\item[(ii)] for a.e. $x\in\Om$, $f(x,\cdot):\,\Rm\to [0,\infty)$ is convex;
\item[(iii)] for a.e. $x\in\Om$,
\[
0\le\,f(x,\eta)\le\,a(x)+\, b\, |\eta|^p\quad\forall\,\eta\in\Rm\,.
\]
%\[
%0\le\, f\left(x,\langle X(x),\xi\rangle\right)\le\,a(x)+\,b\,\left|\langle X(x),\xi\rangle\right|^p\text{ for each }\xi\in\Rn
%\]
%where
%\begin{equation}\label{Xdotxi}
%\langle X(x),\xi\rangle:=\,\left(\langle X_1(x),\xi\rangle,\dots, \langle X_m(x),\xi\rangle\right)=\,C(x)\xi\,.
%\end{equation}
\end{itemize}
\end{teo}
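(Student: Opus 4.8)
The plan is to reduce this statement to the classical Euclidean integral representation theorem \cite[Theorem 20.1]{DM} by combining it with the characterization of functionals representable with respect to the $X$-gradient (Theorem \ref{reprvfeg}). First I would observe that, since $F$ satisfies (a)--(e), by \eqref{represXbyD} we have $|Xu(x)|=|C(x)Du(x)|\le \|C(x)\|\,|Du(x)|$ with $\|C\|\in L^\infty_{\rm loc}(\Omega)$, so the growth bound (e) yields a bound of the form $0\le F(u,A)\le \int_A(a(x)+\tilde b(x)\,|Du(x)|^p)\,dx$ on $\Co1(A)$. To get the genuinely \emph{constant} coefficient growth needed by \cite[Theorem 20.1]{DM}, I would restrict attention to an arbitrary open set $\Omega'\Subset\Omega$, where $\|C\|$ is bounded and $a\in L^1(\Omega')$, and apply the Euclidean representation theorem there. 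This produces a Borel function $f_e:\Omega\times\Rn\to[0,\infty]$, with $f_e(x,\cdot)$ convex and $0\le f_e(x,\xi)\le a(x)+b'(x)|\xi|^p$ a.e., such that
\begin{equation}\label{euclrepr}
F(u,A)=\int_A f_e(x,Du(x))\,dx
\end{equation}
for every $A\in\mA$ and every $u\in L^p(\Omega)$ with $u|_A\in W^{1,p}_{\rm loc}(A)$; the exhaustion argument of Remark \ref{extpropoW1pXloc} (iii), together with locality (a) and the measure property (b), lets one pass from $\Omega'\Subset\Omega$ to all of $\mA$ and patch the local integrands into a single globally defined $f_e$. (Translation invariance (d) of $F$ is exactly what guarantees, via \cite[Theorem 20.1]{DM}, that the Euclidean integrand does not depend on $u$ itself but only on $Du$.)

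\textbf{Passing to the $X$-gradient.}
Next I would show that the Euclidean integrand $f_e$ obtained above automatically satisfies the invariance condition \eqref{f0Vx}, namely $f_e(x,\xi)=f_e(x,\Pi_x(\xi))$ for a.e.\ $x$ and all $\xi\in\Rn$. This is the step where the structure of $F$ really enters: by locality of $F$, if two $\Co1$ functions $u,v$ have the same $X$-gradient on $A$ — equivalently, by \eqref{represXbyD}, $C(x)(Du(x)-Dv(x))=0$, i.e.\ $Du(x)-Dv(x)\in N_x=\ker L_x$ pointwise — one needs $F(u,A)=F(v,A)$. Rather than argue directly on $F$, the cleanest route is: $F$ restricted to $\Co1$ is of the form \eqref{mfonC1} with integrand $f_e$, so by the ``only if'' direction already proved in Theorem \ref{reprvfeg} (applied with the candidate $f$ defined there) it suffices instead to \emph{define} $f$ by \eqref{deffbyfe}, i.e.\ $f(x,\eta):=f_e(x,L^{-1}(x)(\eta))$ on $\Omega_X\times\Rm$ and $0$ elsewhere, and to verify \eqref{f0Vx} holds. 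But wait — \eqref{f0Vx} is not automatic; what one actually knows is \eqref{euclrepr}. So the correct logic is the reverse: test \eqref{euclrepr} on affine functions $u_\xi(x)=\langle\xi,x\rangle$ as in \eqref{uxi} to recover $f_e$ a.e.\ from $F$, then use that $F(u,\cdot)$ depends on $u$ only through $Xu$ — which is forced because $F$ is local \emph{and}, on functions $u|_A\in W^{1,p}_{X;\rm loc}(A)$, $F(u,A)$ is defined; two such functions agreeing $X$-a.e.\ need not agree Lebesgue-a.e., yet... Here is the honest subtlety, and it is the main obstacle.

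\textbf{The main obstacle.}
The delicate point is precisely that, unlike in Theorem \ref{reprvfeg} where an $X$-representation was \emph{assumed}, here we must \emph{derive} the invariance \eqref{f0Vx} from the hypotheses (a)--(e). The key observation making this work is that hypothesis (e) controls $F(u,A)$ by $\int_A(a+b|Xu|^p)$, so $F$ vanishes on any $\Co1$ function that is $X$-affine with $Xu\equiv 0$; more importantly, via the relaxation/Meyers--Serrin machinery of Section 2 (Theorem \ref{Theorem 1.2.3}, Proposition \ref{Proposition 1.2.2}) and the lower semicontinuity (c), one upgrades \eqref{euclrepr} from $\Co1$ to all of $W^{1,p}_{X;\rm loc}(A)$, and on that larger class locality of $F$ genuinely means invariance under adding functions whose $X$-gradient vanishes. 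Concretely: fix $\xi\in\Rn$ and decompose $\xi=\xi_{N_x}+\xi_{V_x}$; comparing the affine test function $u_\xi$ with $u_{\xi_{V_x}}$ is not legitimate pointwise since $x\mapsto\xi_{V_x}$ is not constant, so instead one freezes a Lebesgue point $x_0\in\Omega_X$, uses continuity of $x\mapsto\Pi_x$ (Proposition \ref{decompRnbyVx}, Lemma \ref{helplemma}(iii)), and a blow-up/localization argument on small balls $B(x_0,r)$ to conclude $f_e(x_0,\xi)=f_e(x_0,\Pi_{x_0}(\xi))$ for a.e.\ $x_0$. Once \eqref{f0Vx} is established, Theorem \ref{reprvfeg} (its ``if'' direction, together with Remark \ref{represFwrtXW1p}) immediately gives that $f$ defined by \eqref{deffbyfe} satisfies $F(u,A)=\int_A f(x,Xu)\,dx$ for all $u\in W^{1,p}(A)$, hence — by one more exhaustion argument as in Remark \ref{extpropoW1pXloc}(iii) and the measure property (b) — for all $u$ with $u|_A\in W^{1,p}_{X;\rm loc}(A)$, proving (i). Finally, conclusions (ii) and (iii) are transported from the corresponding properties of $f_e$: convexity of $f(x,\cdot)=f_e(x,\cdot)\circ L^{-1}(x)$ follows since $L^{-1}(x)$ is linear (as in the 1st step of the proof of Theorem \ref{reprvfeg}), and the growth bound follows from $|L^{-1}(x)(\eta)|\le\|L^{-1}(x)\|\,|\eta|$ after absorbing the (locally bounded) operator norm into $b$ — modulo checking that the resulting constant can be taken uniform, which again is handled by the $\Omega'\Subset\Omega$ reduction and a diagonal argument.
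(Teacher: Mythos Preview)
Your overall architecture---reduce to the Euclidean representation \cite[Theorem 20.1]{DM}, then invoke Theorem \ref{reprvfeg}---matches the paper's. But two steps are genuinely incomplete.

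\textbf{The invariance \eqref{f0Vx}.} Your ``blow-up/localization'' sketch is vague and, as written, circular: locality of $F$ means $F(u,A)=F(v,A)$ when $u=v$ a.e.\ on $A$, \emph{not} when $Xu=Xv$ a.e., so you cannot directly compare $u_\xi$ with $u_{\Pi_{x_0}(\xi)}$. The paper's route is much simpler and purely convex-analytic (Lemma \ref{keylemma}): testing the representation \eqref{euclrepresF} and the bound (e) on $u_\xi$ gives the \emph{pointwise} inequality $f_e(x,\xi)\le a(x)+b\,|C(x)\xi|^p$ for a.e.\ $x$; then for any affine minorant $\varphi(\xi)=\langle z,\xi\rangle+k$ of the convex function $f_e(x,\cdot)$ one has $t\langle z,v\rangle+k\le a(x)$ for all $t\in\bbR$ whenever $v\in N_x$, forcing $\langle z,v\rangle=0$. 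Hence every affine minorant is constant along $N_x$, and by Lemma \ref{supaffine} so is $f_e(x,\cdot)$, which is exactly \eqref{f0Vx}. No blow-up is needed.

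\textbf{The extension to $W^{1,p}_{X;\rm loc}$.} This is the step you underestimate most. Theorem \ref{reprvfeg} and Remark \ref{represFwrtXW1p} give the integral representation only on $W^{1,p}(A)$, which is strictly smaller than $W^{1,p}_X(A)$. Mollification plus lower semicontinuity (c) yields only the inequality $F(u,A)\le\int_A f(x,Xu)\,dx$ for $u\in W^{1,p}_X(\Om)$; inner regularity and Remark \ref{extpropoW1pXloc} do not by themselves give the reverse inequality. The paper closes this gap with an auxiliary-functional trick: for fixed $w\in W^{1,p}_X(\Om)$, the shifted functional $G(u,A):=F(u+w,A)$ again satisfies (a)--(e), so by the first two steps it has its own integrand $g$; then the chain
\[
\int_{A'} g(x,0)\,dx = G(0,A')=F(w,A')\le \int_{A'} f(x,Xw)\,dx = \lim_{\eps\to 0}F(w_\eps,A') = \lim_{\eps\to 0}G(w_\eps-w,A')=\int_{A'} g(x,0)\,dx
\]
(using strong continuity of the integral functionals on $W^{1,p}_X(A')$) forces equality. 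Your exhaustion argument alone does not produce this.

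A minor point: your derivation of (iii) via $|L^{-1}(x)(\eta)|\le\|L^{-1}(x)\|\,|\eta|$ gives a constant that blows up near $\mathcal N_X$ and cannot be made equal to $b$. The paper instead tests the already-established representation on $u_\xi$ to get $f(x,C(x)\xi)\le a(x)+b\,|C(x)\xi|^p$ directly, and then uses surjectivity of $L_x$ on $\Om_X$ to conclude $f(x,\eta)\le a(x)+b\,|\eta|^p$ with the \emph{original} constants.
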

%%%%%%%%%%%%%%%%%%%%%%%%%%%%%%%%%%%%%%%%%%%%
In order to prove Theorem \ref{DMThm201ext}, we need two auxiliary key lemmas.

The former is well-known (see, for instance, \cite[Theorem 12.1]{Ro}). Let us recall that an {\it affine function} $\varphi:\,\Rn\to\bbR$ is a function 
\[
\varphi(\xi)=\,\langle z,\xi\rangle+\,k\quad\forall\,\xi\in\Rn\,,
\]
for a suitable $z\in\Rn$ and $k\in\bbR$.
\begin{lem}\label{supaffine} Let $g:\,\Rn\to\bbR$ be a convex function. Then
\[
g(\xi)=\,\sup\left\{\varphi(\xi):\,\varphi\text{ affine, }\varphi(\xi)\le\,g(\xi)\,\quad\forall\,\xi\in\Rn\right\}\,.
\]
\end{lem}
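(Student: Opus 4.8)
The inequality $g(\xi)\ge\sup\{\varphi(\xi):\varphi\text{ affine},\ \varphi\le g\text{ on }\Rn\}$ is immediate, since any affine $\varphi$ with $\varphi\le g$ on $\Rn$ satisfies in particular $\varphi(\xi)\le g(\xi)$ at the given point. Hence the plan is to establish the reverse inequality by showing that for every fixed $\xi_0\in\Rn$ there is an affine function $\varphi$ with $\varphi\le g$ on $\Rn$ and $\varphi(\xi_0)=g(\xi_0)$; this gives $g(\xi_0)=\varphi(\xi_0)\le\sup\{\dots\}$, and since $\xi_0$ is arbitrary the claim follows.

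First I would recall that a convex function $g:\Rn\to\bbR$ which is finite everywhere is automatically continuous (indeed locally Lipschitz). Consequently its epigraph $\mathrm{epi}\,g:=\{(\xi,t)\in\Rn\times\bbR:\ t\ge g(\xi)\}$ is a closed convex subset of $\Rn\times\bbR$ with nonempty interior, and $(\xi_0,g(\xi_0))$ lies on its topological boundary. By the supporting hyperplane theorem (a consequence of the Hahn--Banach separation theorem), there exists a nonzero vector $(z,s)\in\Rn\times\bbR$ such that
\[
\langle z,\xi\rangle+s\,t\ \ge\ \langle z,\xi_0\rangle+s\,g(\xi_0)\qquad\text{for all }(\xi,t)\in\mathrm{epi}\,g\,.
\]

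Next I would check that necessarily $s>0$. Letting $t\to+\infty$ with $\xi$ fixed forces $s\ge0$; and if $s=0$ the inequality reduces to $\langle z,\xi-\xi_0\rangle\ge0$ for every $\xi\in\Rn$, which forces $z=0$, contradicting $(z,s)\neq(0,0)$. Thus $s>0$, and dividing by $s$ and setting $v:=-z/s$, the choice $t=g(\xi)$ yields
\[
g(\xi)\ \ge\ g(\xi_0)+\langle v,\xi-\xi_0\rangle\qquad\text{for all }\xi\in\Rn\,.
\]
The right-hand side is the affine function $\varphi(\xi):=g(\xi_0)+\langle v,\xi-\xi_0\rangle$, which satisfies $\varphi\le g$ on $\Rn$ and $\varphi(\xi_0)=g(\xi_0)$, completing the proof. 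The only mildly delicate point is verifying that the separating functional is non-vertical, i.e.\ $s\neq0$; the rest is a direct application of separation together with the continuity of finite convex functions on $\Rn$. Alternatively one may simply invoke \cite[Theorem 12.1]{Ro}.
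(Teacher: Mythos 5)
Your proof is correct: the one delicate point, ruling out a vertical supporting hyperplane ($s=0$) by using that $g$ is finite on all of $\Rn$ so the epigraph projects onto the whole space, is handled properly, and the rest is the standard supporting-hyperplane/subgradient argument. The paper itself offers no proof and simply cites \cite[Theorem 12.1]{Ro}, so your write-up just supplies in full the classical argument the paper delegates to that reference.
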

The latter will turn out to be a key result through the paper and provides when a Euclidean integrand can be represented as an integrand respect to $X$-gradient.

\begin{lem}\label{keylemma} Let $\Lef:\,\Omega\times\Rn\to [0,\infty]$ be a Borel measurable function. Suppose that
\begin{itemize}
\item[(i)] $\text{for a.e. $x\in\Om$, $f_e(x,\cdot):\,\Rn\to [0,\infty)$ is convex}$;
\item[(ii)] there exist a non negative function $a\in L^{1}_{\rm loc}(\Omega)$ and a positive constant $b$ such that for a.e. $x\in\Om$
\[
\Lef(x,\xi)\le\,a(x)+b|C(x)\xi|^p\quad\forall\,\xi\in\Rn\,,
\]
where $C(x)$ denotes the coefficient matrix of $X$-gradient in \eqref{coeffmatrvf}.
\end{itemize}
Then, $\Lef$ satisfies \eqref{f0Vx}.
\end{lem}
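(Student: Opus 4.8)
The goal is to show that under hypotheses (i) and (ii) the Euclidean integrand $\Lef$ satisfies the invariance condition \eqref{f0Vx}, namely $\Lef(x,\xi)=\Lef(x,\Pi_x(\xi))$ for a.e.\ $x\in\Om$ and every $\xi\in\Rn$. The natural idea is to fix a direction in the kernel $N_x={\rm ker}(L_x)={\rm ker}(C(x))$ and show that, for a.e.\ fixed $x$, the convex function $\Lef(x,\cdot)$ is constant along the affine line $t\mapsto \Pi_x(\xi)+t\,w$ whenever $w\in N_x$; since $\xi-\Pi_x(\xi)=\xi_{N_x}\in N_x$, taking $w=\xi_{N_x}$ and moving from $t=0$ to $t=1$ then yields \eqref{f0Vx}. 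So the whole matter reduces to the elementary fact: a nonnegative convex function on $\Rn$ that is bounded above by $t\mapsto \alpha + \beta|C(x)(\Pi_x(\xi)+tw)|^p = \alpha+\beta|C(x)\Pi_x(\xi)|^p$ (constant in $t$, since $C(x)w=0$), must itself be constant along that line. Indeed a convex function $\bbR\to[0,\infty)$ that is bounded above on all of $\bbR$ is necessarily constant.

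More precisely, here is the order in which I would carry this out. First, discard the negligible set where (i) or (ii) fails, so that for every remaining $x\in\Om_X$ the function $g_x:=\Lef(x,\cdot):\Rn\to[0,\infty)$ is convex and satisfies $g_x(\zeta)\le a(x)+b|C(x)\zeta|^p$ for all $\zeta\in\Rn$. Second, fix such an $x$ and fix $\xi\in\Rn$; set $\eta:=\Pi_x(\xi)\in V_x$ and $w:=\xi-\eta=\xi_{N_x}\in N_x$, so that $C(x)w=L_x(w)=0$. Consider the one-variable function $\psi(t):=g_x(\eta+tw)$ for $t\in\bbR$; it is convex (restriction of a convex function to a line) and nonnegative. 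Third, estimate: for every $t$,
\[
0\le \psi(t)=g_x(\eta+tw)\le a(x)+b\,|C(x)(\eta+tw)|^p=a(x)+b\,|C(x)\eta|^p=:M_x<\infty,
\]
a finite bound independent of $t$. Fourth, invoke the elementary lemma that a convex function $\psi:\bbR\to\bbR$ which is bounded above is constant: if $\psi(t_0)>\psi(t_1)$ for some $t_0<t_1$, convexity forces $\psi(t)\to+\infty$ as $t\to+\infty$, contradicting boundedness, and symmetrically for $t_0>t_1$. Hence $\psi$ is constant, so in particular $\psi(0)=\psi(1)$, i.e.\ $\Lef(x,\Pi_x(\xi))=\Lef(x,\xi)$. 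Since this holds for every $\xi\in\Rn$ and a.e.\ $x\in\Om$ (recalling $|\mathcal N_X|=0$), \eqref{f0Vx} follows.

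I do not expect any serious obstacle here: the statement is essentially a packaging of the identity $C(x)\,\xi_{N_x}=0$ (which is the definition of $N_x$ together with the splitting \eqref{splitting}) with the triviality that a bounded convex function of one real variable is constant. The only point requiring a little care is the measurability/a.e.\ bookkeeping — making sure that the single exceptional null set works simultaneously for all $\xi$, which is automatic here because the exceptional set comes only from hypotheses (i) and (ii) and not from $\xi$, and the argument above is then purely pointwise in $x$. One could alternatively phrase the one-dimensional step using Lemma \ref{supaffine}: every affine minorant $\varphi$ of $g_x$ must, along the line $\eta+\bbR w$, be an affine function of $t$ bounded above by $M_x$, hence constant in $t$; taking the supremum over such $\varphi$ gives that $g_x$ is constant along the line. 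Either route is short.
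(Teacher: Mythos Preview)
Your proof is correct and follows essentially the same approach as the paper: both exploit that $C(x)$ annihilates $N_x$, so the convex function $\Lef(x,\cdot)$ is bounded along every line in the direction of $N_x$ and hence constant there. The only cosmetic difference is that the paper carries out the one-dimensional step via Lemma~\ref{supaffine} (showing every affine minorant $\varphi(\xi)=\langle z,\xi\rangle+k$ has $\langle z,v\rangle=0$ for $v\in N_x$, then taking the supremum), which is precisely the alternative you mention at the end, whereas your primary argument uses directly that a bounded convex function on $\bbR$ is constant.
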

\begin{proof}Let us prove that,  for a.e. $x\in\Om$,
\begin{equation}\label{identcruc}
\Lef(x,\xi_{N_x}+\zeta)=\,\Lef(x,\zeta)\quad\forall\,\xi,\,\zeta\in\Rn\,,
\end{equation}
according to notation in section 3.2.
Notice  that \eqref{identcruc} is equivalent to \eqref{f0Vx}, that is, for a.e. $x\in\Om$
\[
\Lef(x,\xi)=\,\Lef(x,\xi_{V_x})\quad\forall\,\xi\in\Rn\,.
\]
By our assumptions, we can assume that, for a.e. $x\in\Om$,  $g:=\Lef(x,\cdot):\,\Rn\to [0,\infty)$ is a convex function and (ii) holds with $a=\,a(x)\in [0,\infty)$. Let $\varphi:\,\Rn\to\bbR$ be affine with $\varphi(\xi)=\,\langle z,\xi\rangle+k$ and $\varphi(\xi)\le\,g(\xi)$ for each  $\xi\in\Rn$.  Let us prove that
\begin{equation}\label{affineconst}
\langle z,v\rangle=\,0\quad\forall\,v\in N_x\,.
\end{equation}
Let $v\in N_x\setminus\{0\}$ be given, then  also $t\,v\in N_x$ for each $t\in\bbR$. In particular, $C(x)tv=\,0$ for each $t\in\bbR$. Then, by (ii)
\[
\varphi(t\,v)=\,t\,\langle z,v\rangle+k\le\,g(tv)\le\,a\quad\forall\,t\in\bbR\,.
\]
The previous inequality implies \eqref{affineconst}. From \eqref{affineconst}, we get that
\begin{equation}
\begin{split}
\varphi(\xi_{N_x}+\zeta)&=\,\langle a,\xi_{N_x}+\zeta\rangle+b\\
&=\,\langle a,\zeta\rangle+b=\,\varphi(\zeta)\quad\forall\,\xi,\zeta\in\Rn\,.
\end{split}
\end{equation}
From Lemma \ref{supaffine}, \eqref{identcruc} follows.
\end{proof}
%%%%%%%%%%%%%%%%%%%%%%%%%%%%%%%%%%%%%%%%%%%%%%%%%%%%%%%%%%%%%%%%%%
\begin {proof}[Proof  Theorem \ref{DMThm201ext}] Let us first observe that inequality in assumption (e) can be extended to each $u\in W_X^{1,p}(A)$, $A\in\mA$.  Let us  recall that, if $A\in\mA$, by Proposition \ref{Proposition 1.2.2}, given $(\rho_\ep)_\eps$ a family of mollifiers, then, for each $u\in \W1p(A)$, denoting by $\bar u$ its extension to $\mathbb{R}^n$ being $0$ outside $\Omega$, if 
\[
u_\eps(x):=\bar u*\rho_\eps(x)\quad x\in\Rn\,,
\]
for each $A'\in\mA$ with $A'\Subset A$, we have
\begin{equation}\label{convuepsLp}
u_\eps\rightarrow u\text{ in }L^p(\Om)\,;
\end{equation}
\begin{equation}\label{convuepsWXp}
u_\eps|_{A'}\in \W1p(A')\text{ and } u_\eps\rightarrow u\text{ in }\W1p(A')\,.
\end{equation}
Let $u\in L^p(\Om)$ be such that $u|_A\in W^{1,p}(A)$ for some $A\in\mA$. For each $A'\Subset A$, by assumption (c), \eqref{convuepsLp} and \eqref{convuepsWXp}, it follows that
\[
\begin{split}
F(u,A')&\le\,\liminf_{\eps\to 0}F(u_\eps,A')\le \lim_{\eps\to 0}\left(\int_{A'}\left(a(x)+b\,|Xu_\eps(x)|^p\right)\,dx\right)
\\&=\,\int_{A'}\left(a(x)+b\,|Xu(x)|^p\right)\,dx.
\end{split}
\]
Since $F(u,\cdot)$ is a measure, it is also inner regular (see Remark \ref{charmeas}). Thus, taking the supremum on all $A'\in\mA$ with $A'\Subset A$, we get the desired conclusion.
%%%%%%%%%%%%%%%%%%%%%%%%%%%%%%%%
We will now divide the proof in three steps.

{\bf 1st step.} Let us first prove that there exists an integral representation of $F$ with respect to a Euclidean integrand, that is, there exists a Borel function $\Lef:\,\Om\times\Rn\to [0,\infty]$ and a positive constant $b_2$ such that 
\begin{equation}\label{euclrepresF}
F(u,A)=\,\int_A\Lef(x,Du)\,dx\,,
\end{equation}
for each $u\in L^p(\Om)$, $A\in\mA$ with $u|_A\in\ W^{1,p}_{\rm loc}(A)$;
\begin{equation}\label{f0convex}
\text{ for a.e. $x\in\Om$, $f_e(x,\cdot):\,\Rn\to [0,\infty)$ is convex};
\end{equation} 
\begin{equation}\label{controlf0}
\text{for a.e. $x\in\Om$, $0\le\,\Lef(x,\xi)\le\,a(x)+\,b_2\, |\xi|^p\quad\forall\,\xi\in\Rn$};
\end{equation}
\begin{equation}\label{f0VxThm201ext}
\text{\eqref{f0Vx} holds, that is, for a.e. $x\in\Om$ $\Lef(x,\xi)=\,\Lef(x,\Pi_x(\xi))\quad\forall\,\xi\in\Rn$}.
\end{equation}
%where $\Pi_x$ is the projection map in \eqref{Pix}\,.

By \eqref{represXbyD}, if $u\in W^{1.p}(\Om)$, then, for a.e. $x\in\Om$, we have that

\begin{equation}\label{control|Xu|by|Du|}
\left|Xu(x)\right|^p\le\,\sup_{x\in\Om}\norma{C(x)}^p\,|Du(x)|^p=b_2\,|Du(x)|^p\,,
\end{equation}
with $b_2<\,\infty$, since the coefficients of  $X$-gradient are Lipschitz on $\Om$. By \eqref{control|Xu|by|Du|} and assumption (e), it follows that
\begin{equation}\label{tildee}
0\le\,F(u,A)\le\,\int_A\left(a(x)+b_2|Du(x)|^p\right)\,dx\,,
\end{equation}
for each $u\in W^{1.p}(\Om)$, for every $A\in\mA$ . Therefore by (a), (b), (c), (d) and \eqref{tildee}, by applying \cite[Theorem 20.1]{DM}, there exists a Borel function $\Lef:\,\Om\times\mathbb{R}^n\to [0,\infty]$ satisfying \eqref{euclrepresF}, \eqref{f0convex} and \eqref{controlf0}.  Observe now that, by \eqref{euclrepresF} and assumption (e), if $u=u_\xi$, if follows that, for each $x\in\Rn$,
\[
\int_A\Lef(x,\xi)\,dx\le\,\int_A(a(x)+b|C(x)\xi|^p)\,dx\quad\forall\,A\in\mA.
\]
From this integral inequality, arguing as in section 3.2, we can infer the pointwise inequality, that is,  there exists a negligible set $\mathcal N\subset\Om$, such that, for each $x\in\Om\setminus\mathcal N$,
\begin{equation}\label{keyineqLef}
\Lef(x,\xi)\le\,a(x)+b|C(x)\xi|^p\quad\forall\,\xi\in\Rn\,,
\end{equation}
From \eqref{f0convex}, \eqref{keyineqLef} and Lemma \ref{keylemma}, \eqref{f0VxThm201ext} holds.
{\bf 2nd step.} Let us prove that there exists a Borel function $f:\,\Om\times\Rm\to [0,\infty]$  
such that 
\begin{equation}\label{represFC1}
F(u,A)=\,\int_A f(x,Xu)\,dx\,,
\end{equation}
for each $A\in\mA$, $u\in \Co1(A)$ satisfying claims (ii) and (iii).
%%%%%%%%%
%\begin{equation}\label{fconvex}
%\text{ for a.e. $x\in\Om$, $f(x,\cdot):\,\Rm\to [0,\infty)$ is convex};
%\end{equation} 
%and 
%\begin{equation}\label{controlf}
%\text{for a.e. $x\in\Om$, $0\le\,f(x,\eta)\le\,a(x)+\,b\,|\eta|^p\quad\forall\,\eta\in\Rm$}
%%&0\le\,f(x,\eta)\le\,(a(x)+\,b\,|\eta|^p)\quad\forall\,\eta\in\Rn.
%\end{equation}
%where $a\in L^1_{loc}$ and $b>0$ are as in (e). 
%%%%%%
By \eqref{f0convex}, \eqref{controlf0} and \eqref{f0VxThm201ext}, we can apply Theorem \ref{reprvfeg} and  \eqref{represFC1} follows at once  with $f:\,\Om\times\Rm\to [0,\infty]$  defined as in \eqref{deffbyfe}, which also satisfies claim (ii). %\eqref{fconvex} .

 From assumption (e) and \eqref{represFC1}  with $u=u_\xi$, it follows that
\[
0\le\,\int_A f(y,C(y)\xi)\,dy\le\,\int_A\left(a(y)+ b\, |C(y)\xi|^p\right)\,dy\quad\text{ for each }A\in\mA,\,\xi\in\Rn\,.
\]
Taking $A=B(x,r)$, applying Lebesgue's differentiation theorem and arguing as before, from the previous inequality, we can get the following pointwise estimate: for a.e. $x\in\Om$ it holds that
\[
0\le\,f(x,C(x)\xi)\le\,a(x)+\,b\,|C(x)\xi|^p\quad\forall\,\xi\in\Rn\,.
\]
Observe now that, by (LIC), for a.e. $x\in\Om$, the map $L_x:\,\Rn\to\Rm$, $L_x(\xi):=\,C(x)\xi$, is surjective. Then claim (iii) also follows.

{\bf 3rd step.} Let us prove that the integral representation in \eqref{represFC1} can be extended to functions $u\in W^{1,p}_{X,{\rm loc}}(A)$. Therefore claim (i) will follow.

Let us begin to observe that, given $A\in\mA_0$, the functional
\begin{equation}\label{strongcontF}
\W1p(A)\ni u\mapsto\int_Af(x,Xu)\,dx\text{ is (strongly) continuous}.
\end{equation}
Indeed, since for a.e. $x\in\Om$, $f(x,\cdot):\,\Rm\to [0,\infty)$ is continuous and claim (iii) holds, we can apply the  Carath\'eodory continuity theorem (see, for instance, \cite[Example 1.22]{DM}). 

%Let us also recall that, by Proposition \ref{Proposition 1.2.2}, if $(\rho_\ep)_\eps$ is a family of mollifiers, then, for each $u\in \W1p(\Om)$, denoting by $\bar u$ its extension to $\mathbb{R}^n$ being $0$ outside $\Omega$, if 
%\[
%u_\eps(x):=\bar u*\rho_\eps(x)\quad x\in\Rn\,,
%\]
%for each $\Om'\Subset\Om$, we have
%\begin{equation}\label{convuepsLp}
%u_\eps\rightarrow u\text{ in }L^p(\Om)\,;
%\end{equation}
%\begin{equation}\label{convuepsWXp}
%u_\eps\rightarrow u\text{ in }\W1p(\Om')\,.
%\end{equation}

Let $u\in W^{1,p}_{X}(\Om)$ and let $A,\,A'\in\mA$ with $A'\Subset A$.  Since $F(\cdot,A'):\,L^p(\Om)\to [0,\infty]$, by \eqref{convuepsLp}, it follows that
\[
\begin{split}
F(u,A')\le\,\liminf_{\eps\to 0^+}F(u_\eps,A')=\,\lim_{\eps\to 0^+}\int_{A'}f(x,Xu_\eps)\,dx=\,\int_{A'}f(x,Xu)\,dx\,.
\end{split}
\]
As $F$ is a measure, taking the limit as $A'\uparrow A$, we get
\begin{equation}\label{leftineqreprF}
F(u,A)\le\,\int_{A}f(x,Xu)\,dx\,,
\end{equation}
for every $u\in\W1p(\Om)$, for each $A\in\mA$.

Let us fix $w\in\W1p(\Om)$ and let us consider the functional $G:\,L^p(\Om)\times\mA\to [0,\infty]$
\[
G(u,A):=\,F(u+w,A)\,.
\]
It is easy to show that $G$ still satisfies assumptions (a)-(e). %The proof of properties (a)-(e) are quite standard. 
Thus, by the second step,  there exists a Borel function $g:\,\Om\times\Rm\to $ satisfying claims (ii) and (iii) with $f\equiv g$, for suitable $a\in L^1_{\rm loc}(\Om)$ and $b>\,0$ such that

\begin{equation}\label{represGC1}
G(u,A)=\,\int_A g(x,Xu)\,dx\,,
\end{equation}
for each $A\in\mA$, $u\in \Co1(A)$ and 
\begin{equation}\label{leftineqreprG}
G(u,A)\le\,\int_{A}g(x,Xu)\,dx\,,
\end{equation}
for every $u\in\W1p(\Om)$, for each $A\in\mA$. Moreover, arguing as in \eqref{strongcontF}, one can prove that, for each $A\in\mA_0$, the functional
\begin{equation}\label{strongcontG}
\W1p(A)\ni u\mapsto\int_Ag(x,Xu)\,dx\text{ is (strongly) continuous}.
\end{equation}
Let
\[
w_\eps:=\,\bar w*\rho_\eps:\,\Rn\to\bbR\,
\]
and fix $A\in\mA$. Then, for every $A'\in\mA$ with $A'\Subset A$, as $\eps\to 0^+$,
\[
w_\eps\to w\text{ in }L^p(\Om)\text{ and  }w_\eps\to w\text{ in }\W1p(A')\,.
\]
Thus, by \eqref{strongcontF}, \eqref{leftineqreprF}, \eqref{represGC1}, \eqref{leftineqreprG}, \eqref{strongcontG} we obtain
\[
\begin{split}
\int_{A'}g(x,0)\,dx&=\,G(0,A')=\,F(w,A')\le\,\int_{A'}f(x,Xw)\,dx\\
&=\,\lim_{\eps\to 0^+}\int_{A'}f(x,Xw_\eps)\,dx=\,\lim_{\eps\to 0^+}F(w_\eps,A')\\
&=\,\lim_{\eps\to 0^+}G(w_\eps-w,A')=\,\lim_{\eps\to 0^+}\int_{A'}g(x,Xw_\eps-Xw)\,dx\\
&=\,\int_{A'}g(x,0)\,.
\end{split}
\]
This implies that
\begin{equation*}
F(w,A')=\,\,\int_{A'}f(x,Xw)\,dx\text{ for each }A'\in\mA\text{ with }A'\Subset A\,.
\end{equation*}
Taking the limit as $A'\uparrow A$ in the previous identity, we get that

\begin{equation}\label{intrepresW1pOm}
F(w,A)=\,\,\int_{A}f(x,Xw)\,dx\text{ for each }w\in\W1p(\Om)\text{ and }A\in\mA\,.
\end{equation}
If $u\in L^p(\Om)$, $A\in\mA$ and $u|_A\in W_{X;{\rm loc}}^{1,p}(A)$ then, for every $A'\in\mA$ with $A'\Subset A$, by
Remark \ref{extpropoW1pXloc}, there exists $w\in\W1p(\Om)$ such that
\[
u|_{A'}=\,w|_{A'}\,.
\]
Since $F$ is local, by \eqref{intrepresW1pOm}, we obtain that
\[
F(u,A')=\,F(w,A')=\,\int_{A'}f(x,Xw)\,dx=\,\int_{A'}f(x,Xu)\,dx\,.
\]
Taking the limit as $A'\uparrow A$ we get
\[
F(u,A)=\,\int_{A}f(x,Xu)\,dx\,,
\]
which concludes the proof. 
%Claims (ii) and (iii) follow from \eqref{fconvex} and \eqref{controlf}.
\end{proof}

\begin{counterex}\label{counterex} {\rm If $X$ agrees with the Euclidean gradient (Example \ref{exvf} (i)), there are well-known examples that, dropping one of the assumptions among (a)-(e) in Theorem \ref{DMThm201ext}, then the conclusion may fail (see, for instance, \cite{B}). Let $X$ be the Heisenberg vector fields in $\bbR^3$ (Example \ref{exvf} (iii)), let $\Omega\subset\bbR^3$ be a bounded open set containing the origin and $p=2$. Then we give an instance that, dropping assumption (e), the conclusion of   Theorem \ref{DMThm201ext} may  fail. Let $F:\,L^2(\Om)\times\mA\to [0,\infty]$ be the local functional defined as
\[
F(u,A):=\,\begin{cases} \int_A |Du|^2\,dx&\text{ if }u\in W^{1,2}(A)\\
\infty&\text{ otherwise }
\end{cases}
\,.
\]
Then, it is clear that $F$ satisfies (a)-(d). Let us prove that functional $F$ cannot satisfy claim (i).  Indeed, by contradiction, if there is some integrand $f:\,\Omega\times\bbR^2\to [0,\infty]$ for which (i) holds, then, by Theorem \ref{reprvfeg}, the compatibility condition \eqref{f0Vx} must be satisfied, that is, 
\[
|\xi|^2=\,\Lef(x,\xi)=\,f(x,C(x)\xi)=\,\Lef(x,\Pi_x(\xi))=\,|\Pi_x(\xi)|^2
\]
for a.e. $x\in\Om\,,\forall\, \xi\in\bbR^3$. Since, by Lemma \ref{helplemma} (iii), function $\Omega\ni x\mapsto\Pi_x(\xi)$ is continuous, the previous identity must hold for each $x\in\Om$ and $\xi\in\bbR^3$.  Let $x=0$, then a simple calculation yields that $\Pi_0(\xi)=\,(\xi_1,\xi_2,0)$  for each $\xi=\,(\xi_1,\xi_2,\xi_3)\in\bbR^3$. Thus, if we choose $\xi=\,(0,0,1)$, the previous identity is not satisfied and then we have a contradiction. This example also shows that the correspondence which maps integrand $f(x,\eta)$  to Euclidean integrand $\Lef (x,\xi):=\,f(x,C(x)\xi)$ cannot be reversed.
 
 }
\end{counterex}
%%%%%%%%%%%%%%%%%%%%%%%%%%%%%%%%%%%%%%%%%%%%%%%%%%%

%%%%%%%%%%%%%%%%%%%%%%%%%%%%%%%%%%%%%%%%%%%%%%%%%%%%%%%%%%%%%%%%%%%%%
\section{$\Gamma$-convergence for integral functionals depending on vector fields} In this section we are going  to show some results concerning $\Gamma$-convergence of integral functionals depending on vector fields, in the strong and weak topology of $W^{1,p}_X(\Om)$ and in the strong one of $L^p(\Om)$. In particular, we will prove a $\Gamma$-compactness result for a class of integral functionals depending on vector fields with respect to $L^p(\Omega)$-topology (see Theorem \ref{mainthm}).

Let us first recall some notions and results concerning $\Gamma$-convergence theory, which are contained in the fundamental monograph  \cite{DM} and to which we will refer through this section. We also recommend monograph \cite{Bra} as exastuive account on this topic, containing also  interesting applications of $\Gamma$-convergence.
 %%%%%%abstractGammaconv
 
Let $(X,\tau)$ be a topological space and let $(F_h)_h$ be a sequence of functionals from the space $(X,\tau)$ to $\bar\bbR$. 
%We pose for every $x \in X$
%\[
%s -\Gamma(\tau) \liminf_{h\to\infty} F_h(x) = \inf\left\{\liminf _{h\to\infty}F_h(x_h) : x_h\to x\right\}\,.
%\]
%\[
%s -\Gamma(\tau) \limsup_{h\to\infty} F_h(x) = \inf\left\{\limsup_{h\to\infty}F_h(x_h) : x_h\to x\right\}\,. 
%\]
%We call the first member respectively {\it sequential $\Gamma$-lower limit} and {\it sequential $\Gamma$-upper limit} of sequence $(F_h)_h$ in relation to topology $\tau$. 
Let $\mathcal U(x)$ be the family of open neighborhoods of $x\in X$. Then
we pose for every $x\in X$
\[
(\Gamma(\tau)-\liminf_{h\to\infty} F_h)(x) = \sup_{U\in\mathcal U(x)}\liminf_{h\to\infty}\inf_{U}F_h\,.
\]
\[
(\Gamma(\tau)-\limsup_{h\to\infty} F_h)(x) = \sup_{U\in\mathcal U(x)}\limsup_{h\to\infty}\inf_{U}F_h\,.
\]
They are called, respectively, the {\it $\Gamma$-lower limit} and {\it $\Gamma$-upper limit }of the sequence
$(F_h)_h$ in the topology $\tau$. 

Then, we give the following definition.
\begin{defi} Let $(F_h)_h$ and $F$ be functionals from space $(X,\tau)$ to $\bar\bbR$.
%\begin{itemize}
%\item[(i)]
%We say that {\it $(F_h)_h$ sequentially $\Gamma$-converges to $F$ at $x$ } in relation to the topology $\tau$ if
%\[
%s -\Gamma(\tau) \liminf_{h\to\infty} F_h(x) =\, s -\Gamma(\tau) \limsup_{h\to\infty} F_h(x) =\, F (x)
% \]
%and we write
%\[
%F(x) = s-\Gamma(\tau ) \lim_{h\to\infty}F_h(x)\,.
%\]
%\item[(ii)] 
We say that {\it $(F_h)_h$  $\Gamma(\tau)$-converges to $F$}, or also that {\it $(F_h)_h$ $\Gamma$-converges to $F$ in the topology $\tau$, at $x\in X$, }if
\[
(\Gamma(\tau)-\liminf_{h\to\infty} F_h)(x) =\, (\Gamma(\tau)-\limsup_{h\to\infty} F_h)(x) =\, F (x)
 \]
and we write
\[
F(x)=(\Gamma(\tau )-\lim_{h\to\infty}F_h)(x)\,.
\]
%\end{itemize}
\end{defi}
Let us recall below some relevant properties concerning $\Gamma$-convergence that we will need later.

\begin{teo}\label{properGammaconv}Let $F_h$ and $F$ be functionals from space $(X,\tau)$ to $\bar\bbR$.
\begin{itemize}
\item[(i)]$($\cite[Proposition 6.1]{DM}$)$ If $(F_h)_h$  $\Gamma(\tau)$-converges to $F$, then each of its subsequence $(F_{h_k})_k$ still $\Gamma(\tau)$-converges to F.

\item[(ii)]$($\cite[Proposition 6.3]{DM}$)$ Let $\tau_i$, $i=1,2$, be two topologies on $X$ and suppose that $\tau_1$ is weaker than $\tau_2$. If $(F_h)_h $ $\Gamma(\tau_1)$-converges to $F_1$ and $\Gamma(\tau_2)$-converges to $F_2$, then $F_1\le\, F_2$.
\item[(iii)]$($\cite[Theorem 7.8]{DM}$)$ $($Fundamental Theorem of $\Gamma$-convergence$)$ Assume that the sequence $(F_h)_h$ is equicoercive $($on X$)$, that is, for each $t\in\bbR$ there exists a closed countably compact $K_t\subset X$ such that 
 \[
 \{x\in X:\,F_h(x)\le\,t\}\subset K_t\quad\text{ for each }h\,.
 \]
Let us also assume that $(F_h)_h$ $\Gamma(\tau)$-converges to $F$. Then $F$ is coercive and
\[
\min_{x\in X}F(x)=\,\lim_{h\to\infty}\inf_{x\in X}F_h(x)\,.
\]
\item[(iv)]$($\cite[Proposition 8.1]{DM}$)$ Assume that $(X,\tau)$ satisfies the first countability axiom. Then $(F_h)_h$ $\Gamma(\tau)$-converges to $F$ if and only if the following two conditions hold:
\begin{itemize}
\item[(1)] $(${\rm $\Gamma-\liminf$ inequality}$)$ for any $x\in X$ and for any sequence $(x_h)_h$ converging to $x$ in $X$ one has
\[
F(x)\le \liminf_{h\to\infty} F_h(x_h)\,;
\]
\item[(2)] $(${\rm $\Gamma-\lim$ equality}$)$ for any $x\in X$, there exists a sequence $(x_h)_h$ converging to $x$ in $X$ such that 
\[
F(x)=\lim_{h\to\infty} F_h(x_h)\,.
\]
\end{itemize}
\item[(v)]$($\cite[Theorem 8.5]{DM}$)$ Assume that $(X,\tau)$ satisfies the second countability axiom, that is, there is a countable base for the topology $\tau$. Then every sequence $(F_h)_h$ of functionals from $X$ to $\bar\bbR$ has a $\Gamma(\tau)$-convergent subsequence.
\end{itemize}
\end{teo}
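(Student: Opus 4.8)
The five assertions are all classical facts of $\Gamma$-convergence theory, and each of them is established in Dal Maso's monograph at the precise reference recorded after the corresponding item; the plan is therefore to assemble the statement by invoking those results, indicating here only the mechanism behind each one.

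Items (i) and (ii) are purely formal consequences of the definitions of the $\Gamma$-lower and $\Gamma$-upper limits as $\sup_{U\in\mathcal U(x)}\liminf_{h\to\infty}\inf_U F_h$ and $\sup_{U\in\mathcal U(x)}\limsup_{h\to\infty}\inf_U F_h$. For (i) I would use that passing to a subsequence can only raise a $\liminf$ and lower a $\limsup$: hence, at every $x$, the $\Gamma(\tau)-\limsup$ of $(F_{h_k})_k$ is $\le F(x)$ and its $\Gamma(\tau)-\liminf$ is $\ge F(x)$, while the general inequality $\Gamma(\tau)-\liminf\le\Gamma(\tau)-\limsup$ forces both to coincide with $F(x)$. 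For (ii) I would observe that $\mathcal U_{\tau_1}(x)\subseteq\mathcal U_{\tau_2}(x)$ when $\tau_1$ is weaker than $\tau_2$, so that the suprema defining the $\tau_2$-limits run over a larger family of (smaller) neighbourhoods, on which the infima of $F_h$ are larger; consequently $F_1=\Gamma(\tau_1)-\liminf_h F_h\le\Gamma(\tau_2)-\liminf_h F_h\le\Gamma(\tau_2)-\limsup_h F_h=F_2$.

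Item (iv) is the sequential characterization valid under the first countability axiom: the $\Gamma-\liminf$ inequality follows by testing the definition of the $\Gamma$-lower limit on the neighbourhoods of $x$ that eventually contain a prescribed sequence $x_h\to x$, and a recovery sequence is built by a diagonal selection along a countable neighbourhood base of $x$, choosing at each scale an almost-optimal element for $\inf_U F_h$. Item (iii), the Fundamental Theorem, then combines the lower bound $\liminf_h\inf_X F_h\ge\min_X F$ — which uses the $\Gamma-\liminf$ inequality together with equicoercivity, the latter confining the relevant sublevel sets to a fixed countably compact set and hence yielding a limit point where $F$ attains its minimum — with the upper bound $\limsup_h\inf_X F_h\le F(x)$ for every $x$, obtained by plugging a recovery sequence into $\inf_X F_h$.

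Finally, item (v) is the compactness theorem, and it is the only genuinely non-formal assertion: under the second countability axiom I would fix a countable base $\{U_j\}_j$ of $\tau$ and, by a diagonal argument, extract a subsequence along which $\liminf_h\inf_{U_j}F_h=\limsup_h\inf_{U_j}F_h$ for every $j$; one then checks that the functional obtained by taking, at each $x$, the supremum of these common values over the base elements containing $x$ is indeed the $\Gamma(\tau)$-limit of the extracted subsequence. This diagonalization over a countable base is the technical core of the statement and the step I expect to be the main obstacle; it is carried out in full in \cite[Theorem 8.5]{DM}, to which — together with \cite[Proposition 6.1, Proposition 6.3, Theorem 7.8, Proposition 8.1]{DM} for items (i)--(iv) — I refer for the complete proofs.
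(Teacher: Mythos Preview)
Your proposal is correct and in fact goes beyond what the paper does: the paper provides no proof at all for this theorem, simply recording the five statements with their precise references in Dal Maso's monograph \cite{DM} and moving on. Your sketches of the mechanisms behind each item are accurate and more informative than the paper's bare citations, so there is nothing to correct here.
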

\begin{oss} It is well-known that inequality in Theorem \ref{properGammaconv} (ii) can be strict, even in the case of a (infinite dimensional) Banach space $X$, $\tau_1\equiv$ weak topology of $X$ and $\tau_2\equiv$ strong topology of $X$ (see, for instance, \cite[Example 6.6]{DM}).  An instance of such a phenomenon  can occur in the case of non-coercive quadratic integral functionals \cite{ACM}.
\end{oss}

%%%%%%%%
\begin{defi}[$\bar\Gamma$-convergence for local functional on $L^p(\Om)\times\mA$]
%\begin{itemize}
%\item[(i)] Let $F_h:\,L^p(\Om)\to [0,\infty]$ ($h=1,2,\dots$)  and $F:\,L^p(\Om)\to [0,\infty]$. We say that the sequence $(F_h)_h$ $\Gamma$- converges to $F$, and we will write $F=\,\Gamma-\lim_{h\to\infty} F_h $,  if the following conditions are satisfied:
%\subitem[$\Gamma-\liminf$ inequality] for each $u\in L^p(\Om)$ and $(u_h)_h\subset L^p(\Omega)$  converging to $u$ in $L^p(\Om)$ ,
%\begin{equation*}\label{Gammaliminf}
%F(u)\le\,\liminf_{h\to\infty}F_h(u_h)\,;
%\end{equation*}
%\subitem[$\Gamma-\limsup$ inequality] for each $u\in L^p(\Om)$ there exists $(u_h)_h\subset L^p(\Omega)$  converging to $u$ in $L^p(\Om)$ with
%\begin{equation*}\label{Gammalimsup}
%F(u)\ge\,\limsup_{h\to\infty}F_h(u_h)\,.
%\end{equation*}
  Let $F_h:\,L^p(\Om)\times\mA\to [0,\infty]$ ($h=1,2,\dots$) be a sequence of increasing functionals. We say that the sequence $(F_h)_h$ $\bar\Gamma$-converges to a functional $F:\,L^p(\Om)\times\mA\to [0,\infty]$, and we will write $F=\,\bar\Gamma-\lim_{h\to\infty}F_h$, if $F$ is increasing, inner regular and lsc and the following conditions are satisfied:
\subitem[$\bar\Gamma-\liminf$ inequality] for each $u\in L^p(\Om)$, for every $A\in\mA$ and $(u_h)_h\subset L^p(\Omega)$  converging to $u$ in $L^p(\Om)$, it holds
\begin{equation*}\label{barGammaliminf}
F(u,A)\le\,\liminf_{h\to\infty}F_h(u_h,A)\,;
\end{equation*}
\subitem[$\bar\Gamma-\limsup$ inequality] for each $u\in L^p(\Om)$,  for each $A,\,B\in\mA$ with $A\Subset B$, there exists $(u_h)_h\subset L^p(\Omega)$  converging to $u$ in $L^p(\Om)$ with
\begin{equation*}\label{barGammalimsup}
F(u,B)\ge\,\limsup_{h\to\infty}F_h(u_h,A)\,.
\end{equation*}
%\end{itemize}
\end{defi}
\begin{oss}\label{GammalimimpliesbarGammalim}Let us consider a sequence of increasing functionals $F_h:\,L^p(\Om)\times\mA\to [0,\infty]$ ($h=1,2,\dots$). Assume that there exists   a measure functional $F:\,L^p(\Om)\times\mA\to [0,\infty]$ such that $(F_h(\cdot,A))_h$ $\Gamma$-converges to $F(\cdot,A)$ for each $A\in\mA$ . Then $(F_h)_h$ $\bar\Gamma$-converges to $F$. Indeed, being $F$ a $\Gamma$-limit, it is lsc (see \cite[Propostion 6.8]{DM}) and it is increasing and inner regular, because it is a measure. Moreover the $\bar\Gamma-\liminf$ and $\bar\Gamma-\limsup$ inequalities immediately follows by the characterization of $\Gamma$-limit in  Theorem \ref{properGammaconv} (iv).
\end{oss}

\begin{defi}  Let $F:\, L^p(\Om)\times\mA\to [0,\infty]$ be a non-negative functional.
We say that $F$ satisfies the {\it fundamental estimate} if, for every $\eps>\,0$ and for
every $A', \, A'',\, B \in\mA$, with $A' \Subset A''$, there exists a constant $M >\, 0$ with
the following property: for every $u,\,v\in L^p(\Om)$, there exists a function
$\varphi\in\Co\infty_c(A'')$, with $0\le\vf\le1$ on $A''$, $\vf=1$ in a neighborhood of $A'$ , such that
\[
\begin{split}
F\Big(\vf u&+(1-\vf)v, A'\cup B\Big)\le\,(1+\eps)\Big(F(u,A'')+F(v,B)\Big)+\\
&+\eps\Big( \norma u_{L^p(S)}^p+\norma v_{L^p(S)}^p+1\Big)+ M \norma {u-v}_{L^p(S)}\,,
\end{split}
\]
where $S = (A''\setminus A')\cap B$. Moreover, if $\F$ is a class of non-negative functional
on $L^p(\Om)\times\mA$, we say that the {\it fundamental estimate holds uniformly in $\F$} if
each element $F$ of $\F$ satisfies the fundamental estimate with $M$ depending
only on $\eps$, $A'$, $A''$, $B$ while $\vf$ may depend also on $F,\,u, \,v$.
\end{defi}

\begin{oss} Let us recall  that, if $F=\,\bar\Gamma-\lim_{h\to\infty}F_h$
and $F_h:\,L^p(\Om)\times\mA\to[0,\infty]$ are {\it measures}, then $F$ need not be a measure (see \cite[Examples 16.13 and 16.14]{DM}). If the sequence $(F_h)_h$ satisfies the fundamental estimates uniformly with respect to $h$, then $F$ is a measure (see \cite[Theorem 18.5]{DM}).
\end{oss}

Let us now state a result which assures the coincidence between the $\bar\Gamma-\lim F_h$ and $\Gamma-\lim F_h$ for a sequence of local functional $F_h:\,L^p(\Om)\times\mA\to [0,\infty]$, provided that the 
fundamental estimate holds uniformly for the sequence $(F_h)_h$ \cite[Theorem 18.7]{DM}.

\begin{teo}\label{DMThm187} Let $(F_h)_h$ be a sequence of non-negative increasing functionals
 on $L^p(\Om)\times\mA$ which $\bar\Gamma$ -converges to a functional $F$. Assume that there
exist two constants $c_1\ge\,1$ and $c_2\ge\,0$, a non-negative increasing functional
$G:\,L^p(\Om)\times\mA\to [0,\infty]$, and a non-negative Radon measure $\mu: \mathcal B(\Om)\to [0,\infty]$
such that
\begin{equation*}
G(u,A)\le\,F_h(u,A)\le\,c_1\,G(u,A)+\,c_2\norma u_{L^p(A)}^p+\,\mu(A)
\end{equation*}
for every $u\in L^p(\Om)$, $A\in\mA$ and $h\in\bbN$. Assume, in addition, that $G$ is a lower
semicontinuous measure and that the fundamental estimate holds uniformly
for the sequence $(F_h)_h$. Then, $(F_h(\cdot,A))_h$ $\Gamma$-converges in $L^p(\Om)$ to $F(\cdot,A)$ for
every $A\in\mA$ such that $\mu(A)<\,\infty$.
\end{teo}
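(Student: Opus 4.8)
The plan is to obtain the full $\Gamma(L^p(\Om))$-convergence of $F_h(\cdot,A)$ to $F(\cdot,A)$, for each fixed $A\in\mA$ with $\mu(A)<\infty$, by extracting the two $\Gamma$-inequalities from the $\bar\Gamma$-convergence of $(F_h)_h$ and from the uniform fundamental estimate, along the lines of \cite[Chapters~16--18]{DM}. A preliminary step is to transfer the sandwich bound from the $F_h$ to the limit $F$. Testing the $\bar\Gamma$-$\liminf$ inequality against the constant sequence $u_h\equiv u$ gives immediately $F(u,A)\le\liminf_h F_h(u,A)\le c_1 G(u,A)+c_2\norma{u}_{L^p(A)}^p+\mu(A)$. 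For the reverse inequality $G(u,A)\le F(u,A)$, fix $A'\Subset A$, apply the $\bar\Gamma$-$\limsup$ inequality to the pair $A'\Subset A$ to produce $(u_h)_h$ with $u_h\to u$ in $L^p(\Om)$ and $F(u,A)\ge\limsup_h F_h(u_h,A')\ge\limsup_h G(u_h,A')\ge G(u,A')$, using $G\le F_h$ and the lower semicontinuity of $G$; letting $A'\uparrow A$ and using that $G$ is a measure, hence inner regular, yields $G(u,A)\le F(u,A)$.

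Next, because the fundamental estimate holds uniformly for $(F_h)_h$, the $\bar\Gamma$-limit $F$ is a measure (see \cite[Theorem~18.5]{DM}); in particular $F(u,\cdot)$ is increasing and inner regular on $\mA$. The $\Gamma$-$\liminf$ inequality for the slices $F_h(\cdot,A)$ is then just the $\bar\Gamma$-$\liminf$ inequality evaluated at the fixed set $A$: for every $u\in L^p(\Om)$ and every $u_h\to u$ in $L^p(\Om)$ one has $F(u,A)\le\liminf_h F_h(u_h,A)$, i.e. $\Gamma(L^p(\Om))$-$\liminf_h F_h(\cdot,A)\ge F(\cdot,A)$.

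The core difficulty is the matching $\Gamma$-$\limsup$ inequality: for every $u$ with $F(u,A)<\infty$ one must build a \emph{single} sequence $u_h\to u$ in $L^p(\Om)$ with $\limsup_h F_h(u_h,A)\le F(u,A)$. The $\bar\Gamma$-$\limsup$ inequality only provides recovery sequences for pairs $A'\Subset A$, losing the collar $A\setminus\overline{A'}$, so these local recovery sequences have to be glued together. This is where the uniform fundamental estimate enters, in De Giorgi's slicing form: one interpolates, through cut-off functions supported in a thin collar, between a recovery sequence on an inner open set and $u$ (or a recovery sequence for $u$) near $A\setminus\overline{A'}$, carrying out the interpolation across a chain of nested collars $A'=B_0\Subset B_1\Subset\cdots\Subset B_k\Subset A$. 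The overlap error terms from the fundamental estimate telescope over the chain and, by the sandwich bound established above, are dominated by $\tfrac{C}{k}\bigl(F(u,A)+\mu(A)+\norma{u}_{L^p(A)}^p+1\bigr)$, which is finite exactly because $\mu(A)<\infty$. Picking the cheapest collar in the chain, then letting $\eps\to0$ (the remainder $M\norma{u_h-v_h}_{L^p(S)}$ tends to $0$ since both interpolated sequences converge to $u$ in $L^p$), then $k\to\infty$, and finally $A'\uparrow A$ by the inner regularity of the measure $F(u,\cdot)$, yields $\Gamma(L^p(\Om))$-$\limsup_h F_h(\cdot,A)(u)\le F(u,A)$. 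Together with the $\liminf$ inequality this gives $\Gamma(L^p(\Om))$-$\lim_h F_h(\cdot,A)=F(\cdot,A)$ whenever $\mu(A)<\infty$.

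I expect this last gluing step to be the main obstacle: the $O(\eps)$ volume terms in the fundamental estimate can only be absorbed by averaging over many collars, and this averaging is viable precisely when $\mu(A)$ is finite; both the uniformity of the estimate in $h$ and the finiteness of $\mu(A)$ are used in an essential way.
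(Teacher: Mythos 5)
The paper does not actually prove this statement: it is quoted verbatim from Dal Maso's monograph and justified only by the citation \cite[Theorem 18.7]{DM}, so there is no in-paper argument to compare against. Your reconstruction is essentially the standard proof from that reference and is correct in all the points that matter: the transfer of the sandwich bound $G\le F\le c_1G+c_2\norma{\cdot}^p+\mu$ to the $\bar\Gamma$-limit (needed to know $G(u,A)<\infty$ whenever $F(u,A)<\infty$), the observation that the $\Gamma$-$\liminf$ inequality is free, and the identification of the $\Gamma$-$\limsup$/inner-regularity issue on the collar $A\setminus\overline{A'}$ as the real content, resolved by gluing via the uniform fundamental estimate and killed in the limit $A'\uparrow A$ because $\mu$ and the measure $G(u,\cdot)$ are finite on $A$.

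One remark on economy rather than correctness: the De Giorgi averaging over a chain of $k$ nested collars, with the $\tfrac{C}{k}$ error, is not needed at this stage. That device is required when one must \emph{prove} a fundamental estimate whose error involves quantities like $F_h(\cdot,S_i)$ on the overlap; here the fundamental estimate is already assumed, and its error terms are only $\eps\bigl(\norma{u}_{L^p(S)}^p+\norma{v}_{L^p(S)}^p+1\bigr)+M\norma{u-v}_{L^p(S)}$. Taking the outer function equal to $u$ itself on a neighborhood $B$ of $A\setminus\overline{A'}$, a single cut-off gives
\[
\limsup_{h}F_h\bigl(\vf u_h+(1-\vf)u,A\bigr)\le(1+\eps)\Bigl(F(u,A)+c_1G(u,B)+c_2\norma{u}_{L^p(B)}^p+\mu(B)\Bigr)+\eps\,O(1),
\]
since $M\norma{u_h-u}_{L^p(S)}\to0$ as $h\to\infty$ for fixed $\eps$; one then lets $\eps\to0$ and $A'\uparrow A$ (so $B$ shrinks and $G(u,B)+\mu(B)+\norma{u}_{L^p(B)}^p\to0$ by continuity from above of finite measures) and concludes by a diagonal argument. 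Your extra slicing step is harmless but redundant; the essential use of $\mu(A)<\infty$ is exactly where you place it, in making the collar contribution vanish.
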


\subsection{Convergence of integrands and $\Gamma$-convergence for integral functionals depending on vector fields}
In this section we will deal with integral functionals $F: W^{1,p}_X(\Om)\to\bbR$, with $\Omega$  bounded open subset of $\Rn$ and $p>\,1$, of the form
\begin{equation}\label{FonWX}
F(u):=\,\int_\Om f(x,Xu)\,dx
\end{equation}
where the integrand $f:\,\Om\times\Rm\to\bbR$ belongs to class $I_{m,p}(\Omega,c_0,c_1)$ (i.e., $f$ satisfies $(I_1)$, $(I_2)$ and $(I_3)$ in the Introduction).

% is a function verifying the following assumptions:
%\begin{itemize}
%\item[($I_1$)] for every $\eta\in\Rm$, the function $f(\cdot,\eta):\,\Om\to [0,\infty]$ is Borel measurable on $\Om$;
%\item[($I_2$)] for a.e. $x\in\Om$, the function $f(x, \cdot):\,\Rm\to [0,\infty)$ is convex;
%\item[($I_3$)] there exists constants $c_1>\,c_0\ge\,0$ such that
%\[
%c_0\,|\eta|^p\le\,f(x,\eta)\le\,c_1\left(\left|\eta\right|^p+1\right)\,,
%\]
%for a.e. $x\in\Om$ and for each $\eta\in\Rm$.
%\end{itemize}
%We will denote in the following by $I_{m,p}(\Omega,c_0,c_1)$ the class of such integrand functions.\\
It is easy to show, taking \cite[Proposition 5.12]{DM} into account, that the following $\Gamma$-convergence results still hold.
\begin{prop}\label{convpointimplgamma} Let $(f_h)_h$ and $f$ be functions in $I_{m,p}(\Om,0,c_1)$. Let $F_h,\,F:\,W^{1,p}_X(\Om)\to\bbR$ be  the corresponding integral functionals in \eqref{FonWX}. Assume  that
\begin{equation}
F_h\to F\ \text{$($pointwise$)$ in}\ W^{1.p}_X(\Om)\,.
\end{equation}
Then $(F_h)_h$ $\Gamma$-converges to $F$ in $W^{1.p}_X(\Om)$, i.e.,
\begin{equation}
F(u)=\,(\Gamma(W^{1,p}_X(\Om))-\lim_{h\to\infty}F_h)(u)\quad\forall\,u\in W^{1,p}_X(\Om)\,.
\end{equation}
\end{prop}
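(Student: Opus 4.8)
The plan is to deduce this from the classical fact (\cite[Proposition 5.12]{DM}) that, for a sequence of integral functionals of the standard (Euclidean) form with integrands satisfying standard convexity and growth conditions, pointwise convergence on $W^{1,p}$ implies $\Gamma$-convergence on $W^{1,p}$. The strategy is to transfer the statement from the $X$-gradient setting to the Euclidean one, apply the classical result, and transfer back. First I would rewrite each functional $F_h$ and $F$ in Euclidean form: by \eqref{represXbyD}, for $u\in W^{1,p}(\Om)$ one has $Xu = C(x)Du$, so $F_h(u)=\int_\Om \Lefh(x,Du)\,dx$ with $\Lefh(x,\xi)=f_h(x,C(x)\xi)$, and similarly $F(u)=\int_\Om \Lef(x,Du)\,dx$ with $\Lef(x,\xi)=f(x,C(x)\xi)$. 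The hypotheses $(I_1)$--$(I_3)$ on $f_h$, $f$ (with $c_0=0$) pass to $\Lefh$, $\Lef$: measurability in $x$ and convexity in $\xi$ are immediate, and the upper bound $f(x,\eta)\le c_1(|\eta|^p+1)$ gives $\Lef(x,\xi)\le c_1(\|C(x)\|^p|\xi|^p+1)\le c_1'(|\xi|^p+1)$ since $\sup_{x\in\Om}\|C(x)\|<\infty$ on the bounded set $\Om$ with Lipschitz coefficients. So the Euclidean integrands lie in the class to which \cite[Proposition 5.12]{DM} applies.

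Next I would observe that the pointwise hypothesis $F_h\to F$ on $W^{1,p}_X(\Om)$ restricts, via the (continuous) inclusion $W^{1,p}(\Om)\subset W^{1,p}_X(\Om)$ of \eqref{inclclassSobsp}, to pointwise convergence on $W^{1,p}(\Om)$; and on $W^{1,p}(\Om)$ the Euclidean and $X$-forms of the functionals agree. Hence the Euclidean functionals converge pointwise on $W^{1,p}(\Om)$, and \cite[Proposition 5.12]{DM} yields $\Gamma(W^{1,p}(\Om))$-convergence of the Euclidean functionals to the Euclidean limit functional. From the characterization of $\Gamma$-convergence in Theorem \ref{properGammaconv}(iv) (both spaces being metrizable, hence first countable) this means: for every $u\in W^{1,p}(\Om)$ and every $u_h\to u$ in $W^{1,p}(\Om)$, $F(u)\le\liminf_h F_h(u_h)$; and there is a recovery sequence $u_h\to u$ in $W^{1,p}(\Om)$ with $F_h(u_h)\to F(u)$.

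To finish I need the same two inequalities with $W^{1,p}(\Om)$ replaced by $W^{1,p}_X(\Om)$. For the $\Gamma$-$\liminf$ inequality I would argue directly: given $u\in W^{1,p}_X(\Om)$ and $u_h\to u$ in $W^{1,p}_X(\Om)$, convexity of $f_h(x,\cdot)$ together with the $c_0=0$ growth lets one invoke sequential weak lower semicontinuity of the $X$-functionals; more precisely, it suffices to show that each $F_h$ is (strongly, hence along the convergent sequence, using the pointwise convergence $F_h\to F$) lower semicontinuous on $W^{1,p}_X(\Om)$, which follows from \cite[Theorem 2.3.1]{B} exactly as in the proof of Theorem \ref{Theorem 3.1.1}. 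The main obstacle is the $\Gamma$-$\limsup$ (recovery sequence) part: the recovery sequence produced by the classical result converges only in the $W^{1,p}(\Om)$-norm, and one must check this also gives the required convergence in $W^{1,p}_X(\Om)$. This is where the continuity of the inclusion \eqref{inclclassSobsp} is essential — $W^{1,p}$-norm convergence implies $W^{1,p}_X$-norm convergence — so the same sequence works, and $F_h(u_h)\to F(u)$ is read off directly. One subtlety remains: the recovery sequence must start from a point $u$ which is only in $W^{1,p}_X(\Om)$, not necessarily in $W^{1,p}(\Om)$; here I would first approximate $u$ in $W^{1,p}_X(\Om)$ by smooth functions (Meyers--Serrin, Theorem \ref{Theorem 1.2.3}, so by $C^1\cap W^{1,p}_X\subset W^{1,p}$), use a diagonal argument combining these approximants with their Euclidean recovery sequences, and invoke continuity of $F$ on $W^{1,p}_X(\Om)$ (Carath\'eodory continuity, as in \eqref{strongcontF}) to control the limit. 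Assembling these pieces via Theorem \ref{properGammaconv}(iv) gives the claimed $\Gamma(W^{1,p}_X(\Om))$-convergence.
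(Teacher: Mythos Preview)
Your route through the Euclidean representation is unnecessarily indirect, and the $\Gamma$-$\liminf$ step contains a genuine gap. You assert that it suffices that each $F_h$ be lower semicontinuous on $W^{1,p}_X(\Om)$; but lower semicontinuity of each $F_h$ separately, together with pointwise convergence $F_h\to F$, does \emph{not} yield $F(u)\le\liminf_h F_h(u_h)$ for an arbitrary sequence $u_h\to u$. What is needed is \emph{equi}-lower semicontinuity (in fact equi-continuity) of the family $(F_h)_h$ at $u$. Without the uniform upper bound in $(I_3)$ this would fail, so the missing ingredient is precisely the use of that bound.

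The paper's argument (implicit in the reference to \cite[Proposition~5.12]{DM}) is direct and avoids the Euclidean detour entirely. Since every $f_h(x,\cdot)$ is convex and satisfies $0\le f_h(x,\eta)\le c_1(|\eta|^p+1)$ uniformly in $h$ and $x$, the family $\{f_h(x,\cdot):h\in\bbN,\,x\in\Om\}$ is locally equi-Lipschitz on $\Rm$ (cf.\ \cite[Proposition~5.11]{DM}); concretely,
\[
|f_h(x,\eta_1)-f_h(x,\eta_2)|\le C\,(1+|\eta_1|^{p-1}+|\eta_2|^{p-1})\,|\eta_1-\eta_2|
\]
with $C$ independent of $h,x$. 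By H\"older's inequality this makes $(F_h)_h$ equi-continuous on $W^{1,p}_X(\Om)$: if $u_h\to u$ in $W^{1,p}_X(\Om)$ then $|F_h(u_h)-F_h(u)|\to 0$ uniformly in $h$. Combined with $F_h(u)\to F(u)$, this gives $F_h(u_h)\to F(u)$ for \emph{every} such sequence, which is strictly stronger than $\Gamma$-convergence and yields both inequalities at once. No passage to $W^{1,p}(\Om)$, no Meyers--Serrin approximation, no diagonal argument is needed. Your density-plus-diagonal construction for the recovery sequence is correct in outline, but it becomes redundant once equi-continuity is in place, since the constant sequence $u_h\equiv u$ already serves as a recovery sequence.
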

The following theorem, in particular, shows that the pointwise convergence of the integrands also implies the $\Gamma$-convergence of the corresponding integral functionals in the weak topology of $W^{1,p}_X(\Om)$.
\begin{teo}Let $(f_h)_h$ and $f$ be functions in $I_{m,p}(\Om,0,c_1)$. Let $F_h,\,F:\,W^{1,p}_X(\Om)\to\bbR$ be the corresponding integral functionals in \eqref{FonWX}. Assume that
\begin{equation}\label{pointwiseconvintegr}
f_h(\cdot,\eta)\to f(\cdot,\eta)\text{ a.e. in }\Omega,\,\text{for each }\eta\in\Rm\,.
\end{equation}
Then
\begin{equation}
F(u)=\,(\Gamma(W^{1,p}_X(\Om){\rm -weak})-\lim_{h\to\infty}F_h)(u)\quad\forall\,u\in W^{1,p}_X(\Om)\,,
\end{equation}
i.e., $(F_h)_h$ $\Gamma$-converges
to $F$ in the weak topology of $W^{1,p}_X(\Om)$.
\end{teo}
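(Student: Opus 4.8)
The plan is to obtain the weak $\Gamma$-convergence from the pointwise convergence \eqref{pointwiseconvintegr} of the integrands in two stages: first upgrading it to pointwise convergence of the \emph{functionals} on $W^{1,p}_X(\Om)$, which (through Proposition \ref{convpointimplgamma}) already settles the $\Gamma$-limsup, and then proving the genuinely weak $\Gamma$-liminf inequality by a lower semicontinuity argument for \emph{varying} convex integrands. The crucial feature is that only the lower bound $f_h\ge0$ from $(I_3)$ is used for the minorants, while the \emph{upper} $p$-growth alone controls the supporting slopes; thus no coercivity from below is required and the argument stays inside the class $I_{m,p}(\Om,0,c_1)$.

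First I would record that, since each $f_h(x,\cdot)$ is convex and equibounded on compact sets by $(I_3)$ and $\Rm$ is finite dimensional, the pointwise convergence \eqref{pointwiseconvintegr} self-improves, off a single null set obtained by intersecting the exceptional sets over a countable dense set of $\eta$'s, to \emph{locally uniform} convergence $f_h(x,\cdot)\to f(x,\cdot)$ for a.e.\ $x\in\Om$. In particular, for every $u\in W^{1,p}_X(\Om)$ one has $f_h(x,Xu(x))\to f(x,Xu(x))$ a.e., and the domination $f_h(x,Xu)\le c_1(|Xu|^p+1)\in L^1(\Om)$ yields $F_h(u)\to F(u)$ by dominated convergence. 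Feeding this pointwise convergence of functionals into Proposition \ref{convpointimplgamma} gives strong $\Gamma$-convergence, whose recovery sequences are in particular weakly convergent; equivalently, the constant sequence $u_h\equiv u$ is a trivially weakly convergent recovery sequence. Either way the weak $\Gamma$-limsup inequality holds.

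The core is the weak $\Gamma$-liminf. Given $u_h\rightharpoonup u$ weakly in $W^{1,p}_X(\Om)$, that is $Xu_h\rightharpoonup Xu=:w$ weakly in $L^p(\Om,\Rm)$, I must show $F(u)\le\liminf_h F_h(u_h)$. I would use the convexity of each $f_h(x,\cdot)$ through its \emph{global} affine support at the base point $w(x)$: choosing a measurable selection $\zeta_h(x)\in\partial_\eta f_h(x,w(x))$, one has $f_h(x,\eta)\ge f_h(x,w(x))+\langle\zeta_h(x),\eta-w(x)\rangle$ for all $\eta$. Evaluating at $\eta=Xu_h(x)$ and integrating gives
\[
F_h(u_h)\ge\int_\Om f_h(x,w)\,dx+\int_\Om\langle\zeta_h,Xu_h-w\rangle\,dx\,.
\]
The first integral tends to $F(u)=\int_\Om f(x,w)\,dx$ by the dominated convergence above. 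For the second, the $p$-growth $(I_3)$ forces the uniform bound $|\zeta_h(x)|\le C\,(1+|w(x)|^{p-1})$ (test the support inequality at $\eta=w\pm R\,e$ and optimize in $R$), so $\{\zeta_h\}$ is dominated by a fixed element of $L^{p'}(\Om,\Rm)$; since the convex functions $f_h(x,\cdot)$ converge locally uniformly, a measurable selection can be chosen with $\zeta_h\to\zeta\in\partial_\eta f(x,w(x))$ a.e.\ along a subsequence, whence $\zeta_h\to\zeta$ strongly in $L^{p'}$ by domination. Pairing this strong $L^{p'}$-convergence with $Xu_h-w\rightharpoonup0$ weakly in $L^p$ makes the second integral vanish, so $\liminf_h F_h(u_h)\ge F(u)$; a standard Urysohn subsequence argument removes the passage to a subsequence.

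The main obstacle is exactly this convergence of the supporting slopes $\zeta_h$ in $L^{p'}$. The uniform bound $|\zeta_h|\le C(1+|w|^{p-1})\in L^{p'}$ is a clean consequence of the upper $p$-growth, but the a.e.\ convergence of the measurable selections is delicate at the points $x$ where $f(x,\cdot)$ fails to be differentiable at $w(x)$, where the subdifferential is set-valued. I would resolve this through the (subsequential, measurable) selection furnished by the graph convergence of the subdifferentials of locally uniformly converging convex functions, which, combined with the domination, upgrades a.e.\ convergence to strong $L^{p'}$-convergence and closes the estimate.
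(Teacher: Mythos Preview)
Your overall strategy is the standard one, and since the paper gives no proof of its own---it simply says the scheme follows \cite[Theorem 5.14]{DM}---there is nothing more specific to compare against. The $\Gamma$-limsup via the constant recovery sequence is clean once you have $F_h(u)\to F(u)$, and your upgrade of \eqref{pointwiseconvintegr} to locally uniform convergence in $\eta$ (a.e.\ $x$) via equi-Lipschitz bounds from $(I_3)$ is correct and is what drives the dominated-convergence step.

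The point that genuinely needs tightening is the one you flag yourself: strong $L^{p'}$-convergence of the selections $\zeta_h\in\partial_\eta f_h(\cdot,w)$. Graph (Kuratowski) convergence of the subdifferentials is the right fact at the level of \emph{sets}, but it does not force an \emph{arbitrary} measurable selection to converge a.e.: take $f_h\equiv f(\eta)=|\eta|$ and $w\equiv 0$, so that $\partial_\eta f_h(x,0)=\overline{B^m(0,1)}$ for every $h$, and choose $\zeta_h$ to oscillate. The fix is to pick the \emph{canonical} selection $\zeta_h(x):=$ the unique element of minimal norm in $\partial_\eta f_h(x,w(x))$. For a.e.\ $x$ the compact convex sets $\partial_\eta f_h(x,w(x))\subset\Rm$ Kuratowski-converge to $\partial_\eta f(x,w(x))$, and the metric projection of $0$ onto Kuratowski-convergent closed convex sets is continuous along the sequence; hence $\zeta_h(x)\to\zeta(x)$ for the \emph{full} sequence, a.e., no subsequence needed. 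Your domination $|\zeta_h|\le C(1+|w|^{p-1})\in L^{p'}$ then gives strong $L^{p'}$-convergence, and the pairing with $Xu_h-w\rightharpoonup 0$ vanishes as claimed. One further caveat worth a sentence in the write-up: the weak topology on $W^{1,p}_X(\Om)$ is not first countable, so the sequential characterization in Theorem~\ref{properGammaconv}(iv) does not apply verbatim; this is the standard issue treated in \cite[Chapter 8]{DM} and is handled either by exploiting the convexity of the $F_h$ or by reducing to bounded sets, where the weak topology is metrizable.
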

The scheme of the proof trivially follows the one of  \cite[Theorem 5.14]{DM} and we omit it.
\subsection{ $\Gamma$-compactness results for  integral functional depending on vector fields } The main result of this section is the following.

\begin{teo}\label{mainthm} Let $\Om\subset\Rn$ be a bounded open set and let $X=(X_1,\dots,X_m)$   satisfy (LIC) on $\Om$. Let $(f_h)_h\subset I_{m,p}(\Om,c_0,c_1)$ and, for each $h$, let $F^*_h:\,L^p(\Om)\times\mA\to [0,\infty]$ be the local functional defined as 

\begin{equation}\label{Fstarh}
F^*_h(u,A):=
\displaystyle{\begin{cases}
\int_{A}f_h(x,Xu(x))dx&\text{ if }A\in\mA,\,u\in W^{1,p}_X(A)\\
\infty&\text{ otherwise}
\end{cases}
\,.
}
\end{equation}

Then, up to a subsequence, there exist a local functional $F:\,L^p(\Om)\times\mA\to [0,\infty]$ and $f\in I_{m,p}(\Om,c_0,c_1)$ such that 
\begin{itemize}
\item[(i)] \eqref{GammalimitFeuc} holds;
\item[(ii)] $F$ admits the following representation 
\begin{equation}\label{GammalimitF}
F(u,A):=
\displaystyle{\begin{cases}
\int_{A}f(x,Xu(x))dx&\text{ if }A\in\mA,u\in W^{1,p}_X(A)\\
\infty&\text{ otherwise}
\end{cases}
\,.
}
\end{equation}

\end{itemize}
\end{teo}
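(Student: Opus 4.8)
The plan is to realize the two-step strategy announced in the Introduction, combining a Euclidean $\Gamma$-compactness statement (Theorem \ref{GammacompFheucl}) with a closure property of the class $I_{m,p}(\Om,c_0,c_1)$ under $\Gamma$-convergence (Theorem \ref{GammaclosureImp}). First I would pass from $(F^*_h)_h$ in \eqref{Fstarh} to the ``Euclidean'' local functionals $F_h$ of \eqref{Fh}, whose integrands are $\Lefh(x,\xi)=f_h(x,C(x)\xi)$ as in \eqref{Lefh}. Since $\Om$ is bounded and the $c_{ji}$ are Lipschitz, $\Lambda:=\sup_{x\in\Om}\norma{C(x)}<\infty$, so $(I_3)$ gives the $h$-uniform bounds $c_0|C(x)\xi|^p\le\Lefh(x,\xi)\le c_1(\Lambda^p|\xi|^p+1)$. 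As $Xu=C(x)Du$ on $W^{1,1}_{\rm loc}$, one has $F_h(u,A)=\int_A f_h(x,Xu)\,dx$ on $W^{1,1}_{\rm loc}(A)$, and by Theorem \ref{Theorem 3.1.1} (applied on each $A\in\mA$ with $f\equiv f_h$) the $L^p$-relaxation of $F_h(\cdot,A)$ is precisely $F^*_h(\cdot,A)$; hence, by \cite[Proposition 6.11]{DM}, $F_h(\cdot,A)$ and $F^*_h(\cdot,A)$ share their $\Gamma$-lower and $\Gamma$-upper limits, and it suffices to produce, up to a subsequence, the $\Gamma(L^p)$-limit of the (lower semicontinuous) functionals $F^*_h(\cdot,A)$, $A\in\mA$.

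\textbf{Step 1 (existence of the limit and a Euclidean representation).} I would run the localization machinery of \cite{DM}. The compactness theorem for $\bar\Gamma$-convergence of increasing functionals \cite[Theorem 16.9]{DM} gives a subsequence along which $F^*_h\xrightarrow{\bar\Gamma}F$ for some increasing, inner-regular, lsc $F\colon L^p(\Om)\times\mA\to[0,\infty]$. The uniform upper bound on $\Lefh$, together with the Leibniz rule $X_j(\varphi u)=\varphi\,X_j u+u\,X_j\varphi$, yields the fundamental estimate uniformly in $h$; thus $F$ is a measure by \cite[Theorem 18.5]{DM}, and the sandwich $G\le F^*_h\le\frac{c_1}{c_0}G+c_1|\Om|$ with $G(u,A):=c_0\int_A|Xu|^p\,dx$ (a lsc measure, by Theorem \ref{Theorem 3.1.1}) lets \cite[Theorem 18.7]{DM}, i.e.\ Theorem \ref{DMThm187}, upgrade $\bar\Gamma$- to $\Gamma(L^p)$-convergence of $F^*_h(\cdot,A)$ for every $A\in\mA$. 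By the reduction above, $F$ is then also the $\Gamma(L^p)$-limit of the $F_h(\cdot,A)$ of \eqref{Fh}, i.e.\ \eqref{GammalimitFeuc} holds. Moreover $F(u,A)\le\liminf_h F^*_h(u,A)\le c_1\int_A(\Lambda^p|Du|^p+1)\,dx$ for $u\in W^{1,p}(A)$, and $F$ is local; hence \cite[Theorem 20.1]{DM} applies and produces a Borel, convex-in-$\xi$ Euclidean integrand $f_e$ with $F(u,A)=\int_A f_e(x,Du)\,dx$ on $W^{1,p}(A)$, which is \eqref{GammalimitFeucrepresW1p}.

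\textbf{Step 2 (the $X$-representation) and conclusion.} Next I would verify that $F$ satisfies hypotheses (a)--(e) of Theorem \ref{DMThm201ext}; (a)--(d) are inherited from $(F^*_h)_h$ via Step 1, while for (e) — the only point specific to the $X$-gradient — one uses the constant sequence in the $\bar\Gamma$-liminf inequality and the upper bound in $(I_3)$:
\[
F(u,A)\le\liminf_{h\to\infty}\int_A f_h(x,Xu)\,dx\le\int_A\left(c_1+c_1|Xu(x)|^p\right)dx\qquad(u\in\Co1(A),\ A\in\mA),
\]
so (e) holds with $a\equiv c_1\in L^1_{\rm loc}(\Om)$, $b=c_1$. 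Theorem \ref{DMThm201ext} then furnishes a Borel $f\colon\Om\times\Rm\to[0,\infty]$, convex in $\eta$, with $0\le f(x,\eta)\le c_1+c_1|\eta|^p$ a.e., and $F(u,A)=\int_A f(x,Xu)\,dx$ whenever $u|_A\in W^{1,p}_{X;{\rm loc}}(A)$. To finish, note that since $G\le F^*_h$ and $G$ is lsc one has $G\le\Gamma(L^p)-\liminf_h F^*_h=F$, hence $F(u,A)\ge c_0\int_A|Xu|^p\,dx$; testing this with the affine maps $u_\xi(x)=\langle\xi,x\rangle$, $\xi\in\bbQ^n$, and arguing as in the second step of the proof of Theorem \ref{reprvfeg} (Lebesgue differentiation, continuity of $f(x,\cdot)$ and of $x\mapsto C(x)$, surjectivity of $\xi\mapsto C(x)\xi$ for a.e.\ $x$ by (LIC)), one gets $f(x,\eta)\ge c_0|\eta|^p$ a.e., so that $f\in I_{m,p}(\Om,c_0,c_1)$. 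Finally, if $u|_A\in W^{1,p}_X(A)$ then $u|_A\in W^{1,p}_{X;{\rm loc}}(A)$ and $F(u,A)=\int_A f(x,Xu)\,dx$, whereas if $u|_A\notin W^{1,p}_X(A)$ then $F(u,A)\ge G(u,A)=+\infty$; this is exactly \eqref{GammalimitF}. The main obstacle is that there is no uniform Euclidean coercivity $c_0|\xi|^p\le\Lefh(x,\xi)$ — it genuinely fails for the Grushin and Heisenberg fields of Example \ref{exvf} — so the classical $\Gamma$-compactness theorems for integral functionals with standard $p$-growth cannot be invoked directly; the remedy is to feed the \emph{$X$-adapted} lower functional $G(u,A)=c_0\int_A|Xu|^p\,dx$, whose lower semicontinuity is precisely the relaxation content of Theorem \ref{Theorem 3.1.1}, into the \cite[Theorem 18.7]{DM} sandwich, and to close with the $X$-gradient integral representation Theorem \ref{DMThm201ext} in place of its Euclidean analogue.
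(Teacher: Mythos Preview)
Your proposal is correct and follows essentially the same route as the paper: the paper's proof simply reads ``The proof immediately follows from Theorems \ref{GammacompFheucl} and \ref{GammaclosureImp}'', and your two steps are precisely an inline unpacking of those two results (with Theorem \ref{GammacompFheucl} itself resting on Theorems \ref{DMThm196ext}, \ref{DMThm204ext} and \ref{DMThm203}, and Theorem \ref{GammaclosureImp} on Theorem \ref{DMThm201ext}). The only cosmetic difference is that the paper obtains $\bar\Gamma$-compactness via \cite[Theorem 19.5]{DM} (i.e.\ Theorem \ref{DMThm195}) rather than \cite[Theorem 16.9]{DM}, and packages the verification that $f\in I_{m,p}(\Om,c_0,c_1)$ through the explicit formula $f(x,\eta)=\Lef(x,L_x^{-1}(\eta))$ together with property (ii) of the class $\mathcal I$, whereas you recover the lower bound $c_0|\eta|^p$ by testing with affine functions; both arguments are equivalent.
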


Let us begin to recall a fundamental result about the representantion of the $\bar\Gamma$-limit with respect to a Euclidean integrand \cite[Theorem 20.3] {DM}, which applies to a large class of integral functionals. Let $c_1,\,c_2,\,c_3$ be real numbers with $c_i\ge 0$ $i=1,2,3$. Let us denote by $\mathcal H=\mathcal H(p,c_1,c_2, c_3)$ the class of all local functionals $F:\,L^p(\Omega)\times\mA\to [0,\infty]$ for
which there exist two Borel functions $f_e,\,g:\,\Om\times\Rn\to [0,\infty)$ (depending on $F$) such that
\begin{itemize}
\item[(a)] $
F(u,A):=
\displaystyle{\begin{cases}
\int_{A}f_e(x,Du)dx&\text{ if }A\in\mA,u\in W^{1,1}_{\rm loc}(A)\\
\infty&\text{ otherwise}
\end{cases}
\,
}
$
;
\item[(b)] $g(x,\xi)\le\,f_e(x,\xi)\le\, c_1\left(g(x,\xi)+1\right)$;
\item[(c)] $0\le\,g(x,\xi)\le\, c_2\left(|\xi|^p+1\right)$;
\item[(d)] $g(x,\cdot)$ is convex on $\Rn$;
\item[(e)] $g(x,2\xi)\le\, c_3\left(g(x,\xi)+1\right)$,
\end{itemize}
for every $u\in L^p(\Om)$, $x\in\Om$, $\xi\in\Rn$.

\begin{teo}\label{DMThm203} For every  sequence $(F_h)_h$ of functionals of the class $\mathcal H$
there exist a subsequence $(F_{h_k})_k$ and an increasing functional $F:\,L^p(\Omega)\times\mA\to [0,\infty]$ such that $(F_{h_k})_k$ $\bar\Gamma$-converges to $F$. The functional $F$ can be represented in integral form by a Euclidean integrand, that is, there exists a Borel function
$f_e:\,\Omega\times\Rn\to [0,\infty]$ verifying
\begin{itemize}
\item[(i)] $f_e(x,\cdot)$ is  convex on $\Rn$;
\item[(ii)] $0\le\,f_e(x,\xi)\le\,c_1(c_2+1)+\,c_1\,c_2|\xi|^p$ for a.e. $x\in\Om$, for each $\xi\in\Rn$,
\end{itemize}
such that \eqref{GammalimitFeucrepresW1p} holds.
%\begin{equation}\label{GammalimitFeucrepresW1p}
%F(u,A):=\int_{A}f_e(x,Du(x))\,dx
%\end{equation}
%for every $A\in\mA$, for every $u\in L^p(\Om)$ such that $u|_A\in W ^{1,p}(A)$
\end{teo}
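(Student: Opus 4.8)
The statement is essentially a verbatim transcription of \cite[Theorem 20.3]{DM}, so the natural strategy is to verify that the hypotheses of the abstract compactness machinery in \cite{DM} are met and then invoke it. First I would check that the class $\mathcal H=\mathcal H(p,c_1,c_2,c_3)$ is stable in the sense needed: for a sequence $(F_h)_h\subset\mathcal H$, the associated Borel integrands $f_{e,h}$ and $g_h$ satisfy uniform two-sided bounds (b)--(c), uniform convexity (d), and the uniform doubling condition (e), all with the \emph{same} constants $c_1,c_2,c_3$. From (b) and (c) one gets the uniform growth estimate $0\le f_{e,h}(x,\xi)\le c_1(c_2+1)+c_1c_2|\xi|^p$, hence the pointwise control
\[
G(u,A)\le F_h(u,A)\le c_1 G(u,A)+c_1c_2\|u\|_{L^p(A)}^p+\mu(A)
\]
for a suitable lower semicontinuous measure functional $G$ (e.g. the Dirichlet-type functional $u\mapsto\int_A|Du|^p\,dx$ on $W^{1,p}$) and a finite Radon measure $\mu$ (with density $c_1(c_2+1)$). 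This is precisely the sandwich hypothesis of Theorem \ref{DMThm187}.

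Next I would extract a $\bar\Gamma$-convergent subsequence. By the general compactness theorem for increasing functionals on $L^p(\Omega)\times\mathcal A$ (\cite[Theorem 16.9]{DM}), every sequence of increasing functionals has a $\bar\Gamma$-convergent subsequence $(F_{h_k})_k$ to some increasing, inner regular, lower semicontinuous functional $F$. The key additional point is that the \emph{fundamental estimate holds uniformly} for the sequence $(F_h)_h$: this follows from the uniform growth, convexity and doubling properties (b)--(e) via the standard cut-off argument (\cite[Proposition 19.1]{DM} or the analogous statement there), which is exactly where conditions (e) (doubling) and the two-sided bound (b) are used. Granted the uniform fundamental estimate and the sandwich inequality above, Theorem \ref{DMThm187} (i.e. \cite[Theorem 18.7]{DM}) upgrades $\bar\Gamma$-convergence to ordinary $\Gamma(L^p(\Omega))$-convergence of $F_{h_k}(\cdot,A)$ to $F(\cdot,A)$ for every $A\in\mathcal A$, and also guarantees that $F$ is a measure in the $A$-variable.

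Finally I would obtain the integral representation of $F$. Since $F$ is now known to be increasing, local (locality passes to $\bar\Gamma$-limits of local functionals), a measure, lower semicontinuous, translation invariant in $u$ (inherited from the $F_h$'s, each of which is translation invariant by the structure in (a)), and satisfies the growth bound $F(u,A)\le\int_A\big(c_1(c_2+1)+c_1c_2|Du|^p\big)\,dx$ on $W^{1,p}$, it fits the hypotheses of the classical Euclidean integral representation theorem \cite[Theorem 20.1]{DM}. Applying it yields a Borel integrand $f_e:\Omega\times\mathbb{R}^n\to[0,\infty]$, convex in $\xi$ and satisfying $0\le f_e(x,\xi)\le c_1(c_2+1)+c_1c_2|\xi|^p$, such that \eqref{GammalimitFeucrepresW1p} holds. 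The main technical obstacle in all of this is the verification of the \emph{uniform} fundamental estimate for the family $\mathcal H(p,c_1,c_2,c_3)$ — that is, producing the cut-off function $\varphi$ with $M$ depending only on $\eps,A',A'',B$ — since this is what makes $\bar\Gamma$ and $\Gamma$ coincide and what forces the doubling hypothesis (e) into the list; everything else is a bookkeeping check that the stated properties (a)--(e) are exactly the inputs required by \cite[Theorems 16.9, 18.7, 20.1]{DM}.
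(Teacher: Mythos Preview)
The paper does not supply its own proof of this theorem: it is introduced explicitly as a recalled result, namely \cite[Theorem~20.3]{DM}, and is simply stated and used. So there is no ``paper's proof'' to compare against; your task was effectively to reconstruct the argument behind the cited reference.

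Your reconstruction is a reasonable outline of how \cite[Theorem~20.3]{DM} is actually proved. The main ingredients---uniform fundamental estimate via the growth, convexity and doubling conditions (b)--(e), $\bar\Gamma$-compactness, the sandwich inequality feeding into \cite[Theorem~18.7]{DM}, and finally the Euclidean integral representation \cite[Theorem~20.1]{DM}---are all correctly identified. Two minor remarks: the compactness step in Dal Maso's argument is more naturally phrased via \cite[Theorem~19.5]{DM} (which the present paper also recalls as Theorem~\ref{DMThm195}) rather than the general \cite[Theorem~16.9]{DM}, since the former already packages the uniform fundamental estimate with the measure property of the limit; and the locality of the $\bar\Gamma$-limit is not automatic from locality of the $F_h$'s alone but comes from \cite[Theorem~16.15]{DM} once one knows $F$ is a measure. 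Neither point is a genuine gap in your sketch.
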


Let us also recall an useful criterion for proving that a class of local functionals on $L^p(\Om)\times\mA$ satisfies the fundamental estimate uniformly \cite[Theorem 19.4]{DM} and a $\bar\Gamma$-compacness result in this class  \cite[Theorem 19.5]{DM}. 

\begin{teo}\label{DMThm194}Let $c_i$ ($i=1,2,3,4$) be non negative real numbers and let $\sigma:\,\mA\to [0,\infty]$ be a superadditive  increasing set function such that $\sigma(A)<\,\infty$ for each $A\Subset\Om$. Let $\mathcal F'=\,\mathcal F'(p, c_1,c_2,c_3,c_4)$ be the class of all non-negative increasing local  
functionals $F: \, L^p(\Om)\times\mA\to [0,\infty]$ with the following properties: $F$ is a measure and
there exists a non-negative increasing local functional $G: \, L^p(\Om)\times\mA\to [0,\infty]$
(depending on $F$) such that $G$ is a measure and
\begin{equation}\label{DM197}
G(u,A)\le\,F(u,A)\le\,c_1G(u,A)+\,c_2\norma u_{L^p(A)}^p+ \sigma(A)\,;
\end{equation}
\begin{equation}\label{DM198}
\begin{split}
&G(\vf u+(1-\vf) v,A)\le\,c_4\left(G(u,A)+\,G(v,A)\right)+\\
&+c_3c_4\max_{\Om}|D\vf|^p\norma {u-v}_{L^p(A)}^p+\,2c_4\sigma(A)\,,
\end{split}
\end{equation}
for every $u,\,v\in L^p(\Om)$, $A\in\mA$, $\vf\in\Co\infty_c(\Om)$ with $0\le\,\vf\le\,1$. Then, the fundamental estimate holds uniformly on $\mathcal F'$.
\end{teo}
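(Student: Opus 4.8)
The plan is to derive the fundamental estimate from the two structural inequalities \eqref{DM197} and \eqref{DM198} by a nested--cut-off averaging argument, the only genuinely quantitative device being an averaging over many thin transition layers. Fix $\eps>0$ and $A',A'',B\in\mA$ with $A'\Subset A''$, and set $S:=(A''\setminus A')\cap B$. First I would freeze an auxiliary open set $W$ with $A'\Subset W\Subset A''$ (so that $W\Subset\Om$ and hence $\sigma(W)<\infty$) and, for an integer $k$ to be chosen at the very end, a chain of open sets $A'=A_0\Subset A_1\Subset\cdots\Subset A_k=W$ together with cut-offs $\vf_i\in\Co\infty_c(A_i)$, $0\le\vf_i\le 1$, $\vf_i\equiv 1$ on a neighborhood of $\overline{A_{i-1}}$. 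Because $A_i\subseteq W\Subset A''$, each $\vf_i$ lies in $\Co\infty_c(A'')$ and equals $1$ near $A'$, hence is admissible in the fundamental estimate. The decisive geometric point is that the transition shells $\Theta_i:=A_i\setminus\overline{A_{i-1}}$ are pairwise disjoint and contained in $W$, so that sums of $F(u,\cdot)$, $F(v,\cdot)$, $\sigma(\cdot)$ and $\norma{\cdot}_{L^p}^p$ over the shells telescope into a single contribution on $S$ (resp.\ $W$).

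Next I would establish a single inequality valid for every index $i$. Set $w_i:=\vf_i u+(1-\vf_i)v$. Since $F$ is a measure I split $F(w_i,A'\cup B)$ into the contributions on $\{\vf_i=1\}$, on the shell $\Theta_i\cap B$, and on $\{\vf_i=0\}$; by locality $w_i=u$ on the first set and $w_i=v$ on the third, and monotonicity bounds them by $F(u,A'')$ and $F(v,B)$ respectively (using $\{\vf_i=1\}\subseteq A''$ and that the third set meets $A'\cup B$ only inside $B$). On the shell I chain the hypotheses: the right half of \eqref{DM197} passes from $F$ to $G$, inequality \eqref{DM198} estimates $G(w_i,\Theta_i\cap B)$, and the left half of \eqref{DM197} passes back from $G$ to $F$; absorbing $c_2\norma{w_i}_{L^p}^p\le c_2(\norma{u}_{L^p}^p+\norma{v}_{L^p}^p)$ by convexity of $t\mapsto|t|^p$, I arrive at
\[
F(w_i,A'\cup B)\le F(u,A'')+F(v,B)+c_1c_4\big(F(u,\Theta_i\cap B)+F(v,\Theta_i\cap B)\big)+c_1c_3c_4\,K_i\,\norma{u-v}_{L^p(\Theta_i\cap B)}^p+R_i,
\]
where $K_i:=\max_\Om|D\vf_i|^p$ and $R_i:=(2c_1c_4+1)\sigma(\Theta_i\cap B)+c_2\big(\norma{u}_{L^p(\Theta_i\cap B)}^p+\norma{v}_{L^p(\Theta_i\cap B)}^p\big)$.

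Then I would average. Summing over $i=1,\dots,k$, exploiting the disjointness of the shells together with the additivity of the measures $F(u,\cdot),F(v,\cdot)$, the superadditivity of $\sigma$, and the bounds $F(u,S)\le F(u,A'')$, $F(v,S)\le F(v,B)$, $\sigma(\cdot)\le\sigma(W)$, then dividing by $k$ and selecting an index $i^\ast$ for which $F(w_{i^\ast},A'\cup B)$ does not exceed the average, I obtain with $\vf:=\vf_{i^\ast}$
\[
F(w_{i^\ast},A'\cup B)\le\Big(1+\tfrac{c_1c_4}{k}\Big)\big(F(u,A'')+F(v,B)\big)+M\,\norma{u-v}_{L^p(S)}^p+\tfrac{c_2}{k}\big(\norma{u}_{L^p(S)}^p+\norma{v}_{L^p(S)}^p\big)+\tfrac{2c_1c_4+1}{k}\sigma(W),
\]
where $M:=\tfrac{1}{k}c_1c_3c_4\max_{1\le i\le k}\max_\Om|D\vf_i|^p$. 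Finally I would fix $k=k(\eps)$ so large that $c_1c_4/k\le\eps$, $c_2/k\le\eps$ and $(2c_1c_4+1)\sigma(W)/k\le\eps$; this is exactly the fundamental estimate, with the transition constant $M$ now frozen and finite.

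The crucial observation — and the point that makes the estimate uniform on $\mathcal F'$ — is that $k$, $W$ and the $\vf_i$ depend only on $\eps$, $A'$, $A''$, $B$ and on the structural constants $c_1,\dots,c_4$ common to the whole class, but never on the particular $F$ nor on $u,v$; the only $F,u,v$-dependent choice is the index $i^\ast$, which the definition explicitly allows $\vf$ to depend on. The main obstacle is the intrinsic tension of the problem: as the shells are made thinner the cut-off gradients must blow up, $\max_\Om|D\vf_i|\sim k$, so the gradient contribution cannot be made small, yet the multiplicative error on $F(u,A'')+F(v,B)$ and the additive remainders must be driven below $\eps$. The averaging device reconciles the two: division by $k$ sends the genuinely small coefficients $c_1c_4/k$, $c_2/k$, $\sigma(W)/k$ to $0$, while the unavoidable factor $\max_\Om|D\vf_i|^p\sim k^p$ is absorbed into the constant $M\sim k^{p-1}$, which the fundamental estimate permits to be large (it may depend on $\eps$).
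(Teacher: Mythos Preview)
The paper does not give its own proof of this theorem: it is simply quoted from Dal Maso's monograph (the line preceding the statement reads ``Let us also recall an useful criterion \dots\ \cite[Theorem 19.4]{DM}''), so there is no in-paper argument to compare against. Your proposal is the standard layer--averaging proof of \cite[Theorem 19.4]{DM} and is essentially correct: the chain $A'=A_0\Subset\cdots\Subset A_k=W\Subset A''$, the subadditive splitting of $F(w_i,A'\cup B)$ into the three pieces via locality, the passage $F\to G\to F$ on the shell through \eqref{DM197}--\eqref{DM198}, and the pigeonhole selection of $i^\ast$ are exactly the ingredients Dal Maso uses.

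Two small points worth tightening. First, when you write ``selecting an index $i^\ast$ for which $F(w_{i^\ast},A'\cup B)$ does not exceed the average'', what you actually need is an $i^\ast$ for which the \emph{shell contribution} does not exceed the average of the shell contributions; the fixed part $F(u,A'')+F(v,B)$ is the same for every $i$ and plays no role in the selection. Second, the sets you use in the subadditive splitting must be open: take an open $U_i$ with $\overline{A_{i-1}}\subset U_i\subset\{\vf_i=1\}$ and use the cover $U_i$, $(A_i\setminus\overline{A_{i-1}})\cap B$, $B\setminus\mathrm{supp}\,\vf_i$; you have the right idea (``$\vf_i\equiv1$ on a neighborhood of $\overline{A_{i-1}}$'') but it is worth making the three open sets explicit. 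Finally, note that the paper's Definition of the fundamental estimate has $M\norma{u-v}_{L^p(S)}$ without the exponent $p$, whereas your argument naturally produces $M\norma{u-v}_{L^p(S)}^p$; this is the form in \cite[Definition 18.2]{DM}, and the discrepancy is a typographical slip in the paper rather than a flaw in your proof.
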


\begin{teo}\label{DMThm195} Let $\mathcal F'=\,\mathcal F'(p, c_1,c_2,c_3,c_4)$ be the class of local  
functionals defined in Theorem \ref{DMThm194}. For every sequence $(F_h)_h \subset\mathcal F'$, there exists a subsequence $(F_{h_k})_k$ which $\bar\Gamma$-converges to a lower
semicontinuous functional $F \in\mathcal F'$.
\end{teo}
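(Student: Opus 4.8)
The plan is to first extract a $\bar\Gamma$-convergent subsequence by abstract compactness, then to use the uniform fundamental estimate of Theorem \ref{DMThm194} to guarantee that the limit is a measure, and finally to verify membership in $\mathcal F'$ by transferring the structural inequalities \eqref{DM197}--\eqref{DM198} to the limit.

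First I would invoke the abstract sequential compactness of $\bar\Gamma$-convergence for increasing functionals on $L^p(\Om)\times\mA$ (\cite[Theorem 16.9]{DM}): along a subsequence, not relabelled, $(F_h)_h$ $\bar\Gamma$-converges to a functional $F$ that is automatically increasing, inner regular and lower semicontinuous. Each $F_h\in\mathcal F'$ carries an auxiliary measure $G_h$ satisfying \eqref{DM197}--\eqref{DM198}; since the $G_h$ are again increasing functionals, a further extraction makes $(G_h)_h$ $\bar\Gamma$-converge to some increasing, inner regular, lsc $G$, while, the $\bar\Gamma$-limit being stable under passage to subsequences, $(F_h)_h$ still $\bar\Gamma$-converges to $F$.

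Next I would show that both $F$ and $G$ are measures. By Theorem \ref{DMThm194} the fundamental estimate holds uniformly on $\mathcal F'(p,c_1,c_2,c_3,c_4)$, hence for $(F_h)_h$, and the Remark recalling \cite[Theorem 18.5]{DM} then forces the $\bar\Gamma$-limit $F$ to be a measure. For $G$ I would observe that each $G_h$, being a measure satisfying \eqref{DM198}, lies in $\mathcal F'(p,1,0,c_3,c_4)$ with the same $\sigma$ (taking $G_h$ as its own auxiliary functional, so that $G_h\le G_h\le G_h+\sigma$); Theorem \ref{DMThm194} applied to this class yields a uniform fundamental estimate for $(G_h)_h$, and \cite[Theorem 18.5]{DM} makes $G$ a measure as well. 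Together with locality, which is inherited in the $\bar\Gamma$-limit, this shows $F$ is a non-negative increasing local lsc measure.

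The main obstacle is the transfer of \eqref{DM197}--\eqref{DM198} to the pair $(F,G)$, for which I would combine the $\bar\Gamma$-$\liminf$/$\bar\Gamma$-$\limsup$ inequalities with inner regularity. For $G\le F$: fixing $u,A$ and writing $G(u,A)=\sup_{A'\Subset A}G(u,A')$, for each $A'\Subset A$ I would choose a recovery sequence $u_h\to u$ realising the $\bar\Gamma$-$\limsup$ inequality for $F$ on the pair $A'\Subset A$, use $G_h(u_h,A')\le F_h(u_h,A')$, and pass to the limit through the $\bar\Gamma$-$\liminf$ inequality for $G$, obtaining $G(u,A')\le F(u,A)$. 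The upper bound in \eqref{DM197} and inequality \eqref{DM198} are dual: choosing recovery sequences $u_h\to u$ (and $v_h\to v$) for $G$ on a nested pair $A'\Subset A''\Subset A$, noting that $\varphi u_h+(1-\varphi)v_h\to\varphi u+(1-\varphi)v$ in $L^p(\Om)$, and applying the $\bar\Gamma$-$\liminf$ inequality for the left-hand functional, I would pass to the limit in the $h$-th instance of \eqref{DM197}/\eqref{DM198}, using $\|u_h-v_h\|_{L^p(A')}\to\|u-v\|_{L^p(A')}$ together with $\limsup_h G_h(u_h,A')\le G(u,A'')$ and $\limsup_h G_h(v_h,A')\le G(v,A'')$; inner regularity of the left-hand side then lets $A',A''\uparrow A$ and absorbs the auxiliary sets. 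The delicate point is orienting the three nested sets so that every $\liminf/\limsup$ points the correct way while the $L^p$-error and $\sigma$-terms stay controlled in the limit. Once \eqref{DM197}--\eqref{DM198} hold for $(F,G)$, the functional $F$ belongs to $\mathcal F'$ and is lower semicontinuous, which is the assertion.
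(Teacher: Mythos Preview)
The paper does not prove this statement: Theorem \ref{DMThm195} is quoted verbatim as \cite[Theorem 19.5]{DM}, with no proof supplied. So there is no ``paper's own proof'' to compare against.

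That said, your plan is essentially the argument given in Dal Maso's monograph. The ingredients line up: abstract $\bar\Gamma$-compactness \cite[Theorem 16.9]{DM} to extract subsequences for both $(F_h)$ and the auxiliary $(G_h)$; the uniform fundamental estimate from Theorem \ref{DMThm194} together with \cite[Theorem 18.5]{DM} to make the limits measures; and the $\bar\Gamma$-$\liminf$/$\bar\Gamma$-$\limsup$ inequalities plus inner regularity to transfer \eqref{DM197}--\eqref{DM198}. Your observation that each $G_h$ lies in $\mathcal F'(p,1,0,c_3,c_4)$ (with itself as auxiliary) is the right way to get $G$ a measure. The nested-set bookkeeping you flag as delicate is handled exactly as you sketch: work on $A'\Subset A''\Subset A$, use monotonicity of $G$ and $\sigma$ to replace $A''$ by $A$ on the right, then take the supremum over $A'$ via inner regularity on the left. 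One small point: locality of the $\bar\Gamma$-limit is not automatic from the definition but follows from \cite[Proposition 16.15]{DM}; you cite ``inherited in the $\bar\Gamma$-limit'' without a reference, so be explicit there.
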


Let us now introduce some results concerning functionals depending on vector fields.  
Let us first prove  a $\Gamma$-compactness result (see Theorem \ref{DMThm196ext}) for a class of local functional on $L^p(\Omega)\times\mA$ satisfying suitable growth conditions with respect to the local functional $\Psi_p:\,L^p(\Om)\times\mA\to [0,\infty]$ defined as
\begin{equation}\label{Psip}
\Psi_p(u,A):=\displaystyle{\begin{cases}
\int_{A}|Xu|^p\,dx&\text{ if }A\in\mA,u\in W^{1,p}_X(A)\\
+\infty&\text{ otherwise}
\end{cases}
\,.
}
\end{equation}
As a consequence, we will get a $\Gamma$-compactness result for a class of integral functionals represented with respect to Euclidean integrands, but still with growth condition with respect to to  $\Psi_p$ (see Theorem \ref{DMThm204ext}). The former  is an extension of \cite[Theorem 19.6]{DM}, the latter of \cite[Theorem 20.4]{DM}.

\begin{lem}\label{Psipprop} Let $p>\,1$. Then $\Psi_p:\,L^p(\Om)\times\mA\to [0,\infty]$  is  a measure and lsc.
\end{lem}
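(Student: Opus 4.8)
I want to show that $\Psi_p:L^p(\Omega)\times\mathcal{A}\to[0,\infty]$, defined in \eqref{Psip}, is both a measure (in the sense of the set-function definition above, i.e. $\Psi_p(u,\cdot)$ is induced by a Borel measure for every fixed $u$) and lower semicontinuous in the $L^p(\Omega)$-variable. Since $\Psi_p$ is the integral functional \eqref{Fstar} associated with the particular integrand $f(x,\eta)=|\eta|^p$, which plainly belongs to $I_{m,p}(\Omega,1,1)$ (it is Borel, convex in $\eta$, and satisfies $|\eta|^p\le f(x,\eta)\le|\eta|^p+1\le 2(|\eta|^p+1)$), both assertions should follow from facts already recorded in the excerpt; the point of isolating this as Lemma~\ref{Psipprop} is to have a clean reference for the comparison functional $G$ appearing in Theorems~\ref{DMThm194}, \ref{DMThm195}.

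\textbf{Lower semicontinuity.} First I would prove $\Psi_p(\cdot,A)$ is $L^p(\Omega)$-lower semicontinuous for each fixed $A\in\mathcal{A}$. This is exactly the content of the assertion used in the proof of Theorem~\ref{Theorem 3.1.1}(iii): by \cite[Theorem 2.3.1]{B} the functional $F^*$ of \eqref{Fstar} associated with any integrand in $I_{m,p}(\Omega,c_0,c_1)$ with $c_0>0$ is $L^p$-lower semicontinuous, and $\Psi_p(\cdot,A)$ is precisely that functional on the open set $A$ with integrand $|\eta|^p$. Alternatively, and just as quickly: if $u_k\to u$ in $L^p(\Omega)$ with $\liminf_k\Psi_p(u_k,A)<\infty$, pass to a subsequence realizing the liminf with uniformly bounded $\|Xu_k\|_{L^p(A)}$; by Proposition~\ref{psaw} the ball in $W^{1,p}_X(A)$ is weakly compact ($1<p<\infty$), so $u_k\rightharpoonup u$ in $W^{1,p}_X(A)$, hence $Xu_k\rightharpoonup Xu$ in $L^p(A,\mathbb{R}^m)$, and $\|Xu\|_{L^p(A)}^p\le\liminf_k\|Xu_k\|_{L^p(A)}^p$ by weak lower semicontinuity of the norm; in particular $u|_A\in W^{1,p}_X(A)$ so the ``otherwise'' branch is not triggered in the limit. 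Either route gives condition (c) of the measure-functional definition.

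\textbf{Measure property.} For fixed $u\in L^p(\Omega)$, if $u\notin W^{1,p}_{X;{\rm loc}}(\Omega)$ one still has to check that $A\mapsto\Psi_p(u,A)$ is induced by a Borel measure: but by Remark~\ref{extpropoW1pXloc}(iii)-(iv) the set of $A\in\mathcal{A}$ on which $u|_A\in W^{1,p}_X(A)$ is stable under taking open subsets and finite unions, and $\Psi_p(u,A)=+\infty$ as soon as $u|_A\notin W^{1,p}_X(A)$; one checks directly that in every case $\Psi_p(u,\cdot)$ is increasing, inner regular, subadditive and superadditive, so by Remark~\ref{charmeas} (i.e. \cite[Theorem 14.23]{DM}) it is a measure. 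Concretely: when $u|_\Omega\in W^{1,p}_{X;{\rm loc}}(\Omega)$, $\Psi_p(u,A)=\int_A|Xu|^p\,dx$ is the restriction of the absolutely continuous Radon measure $|Xu|^p\mathcal{L}^n$, hence trivially a measure and inner regular. When $u|_A\notin W^{1,p}_X(A)$ for some $A$, set $A_u:=\bigcup\{B\in\mathcal{A}:u|_B\in W^{1,p}_X(B)\}$; by Remark~\ref{extpropoW1pXloc}(iv) every open $B\subseteq A_u$ with $B\Subset A_u$ has $u|_B\in W^{1,p}_X(B)$ (cover $\overline B$ by finitely many of the defining $B$'s and apply (ii)), while $\Psi_p(u,A')=+\infty$ whenever $A'\not\subseteq A_u$; one then verifies increasingness, sub/superadditivity and inner regularity of this $\{0\text{-or-}\infty\}$-valued-plus-absolutely-continuous set function by hand, and invokes Remark~\ref{charmeas}.

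\textbf{Main obstacle.} The only genuinely delicate point is the bookkeeping in the measure part when $u$ fails to be locally in $W^{1,p}_X$: one must be careful that $\Psi_p(u,\cdot)$ remains \emph{inner regular} on the exceptional open sets — that is, that $\Psi_p(u,A)=+\infty$ forces $\sup\{\Psi_p(u,B):B\Subset A\}=+\infty$, which is where Remark~\ref{extpropoW1pXloc}(iii) (boundedness of $\|u\|_{W^{1,p}_X(B)}$ on an exhaustion $B\Subset A$ implies $u|_A\in W^{1,p}_X(A)$) is exactly what is needed. Everything else is routine and reduces to already-cited statements, so the proof should be short.
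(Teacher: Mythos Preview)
Your proposal is correct and follows essentially the same route as the paper. For lower semicontinuity the paper uses precisely your ``alternative'' argument (pass to a subsequence realizing the $\liminf$, use reflexivity of $W^{1,p}_X(A)$ from Proposition~\ref{psaw} to get $Xu_h\rightharpoonup Xu$ in $L^p(A)$, and conclude by weak lsc of the norm); for the measure property it also invokes Remark~\ref{charmeas} and verifies subadditivity, superadditivity and inner regularity directly from Remark~\ref{extpropoW1pXloc}\,(ii),\,(iv),\,(iii), handling the $+\infty$ cases as trivial rather than packaging them via your auxiliary set $A_u$---a purely cosmetic difference.
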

\begin{proof}
Let us start by proving that for any $A\in \mA$ the function $u\to \Psi_p(u,A)$ is $L^p-$lsc, i.e., for any $A\in \mA$ and $(u_h)_h\subset L^p(\Om)$, $u_h\to u$ in $L^p(\Om)$, it satisfies
\begin{align}
\Psi_p(u,A)\leq \liminf_{h\to \infty}\Psi_p(u_h,A).
\end{align}
We can assume $\liminf_{h\to \infty}\Psi_p(u_h,A)<\infty$. Therefore, up to a subsequence, we can also assume that $\lim_{h\to\infty}\Psi_p(u_h,A)$ exists. Hence $(u_h)_h$ is bounded in $W^{1,p}_X(A)$ and, since  $W^{1,p}_X(A)$ is reflexive (recall Proposition \ref{psaw} and that $p>1$), we get a subsequence $u_h\rightharpoonup u$ in $W^{1,p}_X(A)$ and, in particular, $Xu_h\rightharpoonup Xu$ in $L^p(A)$, which implies the conclusion, recalling the lower semicontinuity of the $L^p-$norm with respect to the weak convergence.

We now prove that for any $u\in L^p(\Om)$ the function $\Psi_p(u,\cdot):\mA\to [0,\infty]$ is a measure, i.e., there exists a Borel measure $\mu_u:\mathcal{B}(\Om)\to [0,\infty]$ such that $\Psi_p(u,A)=\mu_u(A)$ for every $A\in \mA$. Since, by Remark \ref{charmeas}, $\Psi_p(u,\cdot)$ is nonnegative, increasing and such that $\Psi_p(u,\emptyset)=0$, it suffices to prove that $\Psi_p(u,\cdot)$ is subadditive, superadditive and inner regular.

$\Psi_p(u,\cdot)$ is subadditive, namely for every  $A,A_1,A_2\in \mathcal{A}$ with $ A\subseteq A_1\cup A_2$ 
\begin{align}\label{subadditive}
\Psi_p(u,A)\leq \Psi_p(u,A_1)+\Psi_p(u,A_2).
\end{align}
We can assume $u\in W^{1,p}_X(A_1)\cap W^{1,p}_X(A_2)$ and $A_1,A_2\in \mathcal{A}$, otherwise the conclusion is trivial. Remark \ref{extpropoW1pXloc} (ii) gives $u\in W^{1,p}_X(A_1\cup A_2)$, therefore $\Psi_p(u,A_1\cup A_2)=\int_{A_1\cup A_2} |Xu|^p\, dx$ and \eqref{subadditive} follows.

$\Psi_p(u,\cdot)$ is superadditive, namely for every  $A,A_1,A_2\in \mathcal{A}$ with $ A_1\cup A_2\subseteq A$ and $A_1\cap A_2=\emptyset$
\begin{align}\label{peradditive}
\Psi_p(u,A)\geq \Psi_p(u,A_1)+\Psi_p(u,A_2).
\end{align}
We can assume $u\in W^{1,p}_X(A)$ and $A\in \mathcal{A}$, otherwise the conclusion is trivial. Remark \ref{extpropoW1pXloc} (iv) gives 
$u\in W^{1,p}_X(B)$ for any open set $B\subseteq A$. Let $A,A_1,A_2\in \mathcal{A}$, $ A_1\cup A_2\subseteq A$ and $A_1\cap A_2=\emptyset$. Then 
\[
\Psi_p(u,A_1)+\Psi_p(u,A_2)=\int_{A_1\cup A_2} |Xu|^p\, dx\leq \int_{A} |Xu|^p\, dx
\]
and \eqref{peradditive} follows.

$\Psi_p(u,\cdot)$ is inner regular, namely for every $A\in \mathcal{A}$
\begin{align}
\Psi_p(u,A)=\sup\left\{\Psi_p(u,B)\ |\ B\in\mathcal{A},\ B\Subset A\right\}.
\end{align}
Let $M:=\sup\left\{\Psi_p(u,B)\ |\ B\in\mathcal{A},\ B\Subset A\right\}\in [0,+\infty]$.
If $M=+\infty$, there exists $\{B_i\}_{i\in\mathbb{N}}\subset \mathcal{A}$, $B_i\Subset A$ such that $\Psi_p(u,B_i)\to \infty$ as $i\to +\infty$ and the conclusion follows by observing that for all $i\in\mathbb{N}$, $\Psi_p(u,B_i)\leq \Psi_p(u,A)$.
If $M\in [0,\infty)$, then $\|u\|_{W^{1,p}_X(B)}\leq M$ for any $B\in \mathcal{A}$, $B\Subset A$. Then, Remark \eqref{extpropoW1pXloc} (iii) gives $u\in W^{1,p}_X(A)$ and, by definition, $\Psi_p(u,A)=\int_A |Xu|^p\, dx$. For any $\varepsilon>0$ there exists $\delta>0$ such that $\int_E |Xu|^p\, dx\leq \varepsilon$ for any $E\in \mathcal{A}$ with $|E|\leq \delta$. Let $B\Subset A$ such that $|A\setminus \overline{B}|\leq \delta$, then
\[
\int_A |Xu|^p\, dx=\int_B |Xu|\, dx+\int_{A\setminus \overline{B}}|Xu|^p\, dx\leq \int_B |Xu|^p\, dx+ \varepsilon
\]
and the thesis follows.
\end{proof}
\begin{teo}\label{DMThm196ext} Let $p>\,1$, $\Om\subset\Rn$ be a bounded open set and $c_1\ge\,c_0>\,0$. Denote by $\mathcal M=\,\mathcal M(p,c_0,c_1)$
the class of local functionals $F: \,L^p(\Om)\times\mA\to [0,\infty]$ such that  $F$ is a measure
and
\begin{equation}\label{controlFbyPsip}
c_0\,\Psi_p(u,A)\le\,F(u,A)\le\,c_1\left(\Psi_p(u,A)+\norma u_{L^p(A)}^p+\,|A|\right)
\end{equation}
for every $u\in L^p(\Om)$ and for every $A\in\mA$. Then, the fundamental  
estimate holds uniformly in $\mathcal M$ and every sequence $(F_h)_h\subset\mathcal M$ has a  
subsequence $(F_{h_k})_k$ which $\bar\Gamma$-converges to a functional $F$ of the class $\mathcal M$.  
Moreover, $(F_{h_k}(\cdot,A))_k$ $\Gamma$-converges to $F(\cdot,A)$ in $L^p(\Om)$ and
\begin{equation}\label{domGammalimit}
\mathrm{dom}F(\cdot,A):=\left\{u\in L^p(\Om):\,F(u,A)<\,\infty\right\}=\W 1p(A)\,
\end{equation}
for every $A\in\mA$. 
\end{teo}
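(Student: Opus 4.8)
The plan is to fit the class $\mathcal{M}(p,c_0,c_1)$ into the abstract compactness machinery of \cite{DM} recalled above, with $G:=c_0\Psi_p$ in the role of the auxiliary functional and $\sigma(A):=c_1|A|$ in the role of the auxiliary set function. Note that $\sigma$ is a finite Radon measure on $\Om$ (here the boundedness of $\Om$ enters), hence increasing and superadditive, and that $\Psi_p$ is a non-negative, increasing, local functional which is a measure and $L^p(\Om)$-lower semicontinuous by Lemma \ref{Psipprop}.

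First I would show that $\mathcal{M}\subseteq\mathcal F'(p,c_1/c_0,c_1,c_3,c_4)$, in the notation of Theorem \ref{DMThm194}, for suitable constants $c_3,c_4$. Indeed, for $F\in\mathcal{M}$ the sandwich \eqref{DM197} with $G=c_0\Psi_p$ and $\sigma(A)=c_1|A|$ is exactly \eqref{controlFbyPsip} once the left inequality is divided by $c_0$ (recall $c_1/c_0\ge1$ since $c_1\ge c_0>0$). The only estimate needing an actual computation is \eqref{DM198} for $G=c_0\Psi_p$: for $u,v\in L^p(\Om)$, $A\in\mathcal{A}$ and $\varphi\in\Co\infty_c(\Om)$ with $0\le\varphi\le1$, one may assume $u|_A,v|_A\in\W1p(A)$, the right-hand side being infinite otherwise; then, since the coefficients of $X$ are Lipschitz and hence bounded on the bounded set $\Om$, the Leibniz rule gives $(\varphi u+(1-\varphi)v)|_A\in\W1p(A)$ together with $X(\varphi u+(1-\varphi)v)=\varphi Xu+(1-\varphi)Xv+(u-v)X\varphi$ a.e. on $A$, and combining $|a+b|^p\le 2^{p-1}(|a|^p+|b|^p)$, the convexity of $t\mapsto|t|^p$ with $0\le\varphi\le1$, and the pointwise bound $|X\varphi|\le c_X|D\varphi|$ with $c_X:=\sup_{x\in\Om}\|C(x)\|<\infty$, one reaches \eqref{DM198} with $c_4=2^{p-1}$ and $c_3=c_0 c_X^p$ (the nonnegative term $2c_4\sigma(A)$ only helping). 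Since all the constants depend only on $p,c_0,c_1$ and on $X,\Om$, and not on the individual $F$, Theorem \ref{DMThm194} yields that the fundamental estimate holds uniformly in $\mathcal{M}$.

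Next, given $(F_h)_h\subset\mathcal{M}$, Theorem \ref{DMThm195} produces a subsequence $(F_{h_k})_k$ that $\bar\Gamma$-converges to a lower semicontinuous functional $F$ which is a measure. I would then apply Theorem \ref{DMThm187} with $G=c_0\Psi_p$ and $\mu=c_1\mathcal L^n$: the sandwich $G(u,A)\le F_{h_k}(u,A)\le (c_1/c_0)G(u,A)+c_1\norma{u}_{L^p(A)}^p+\mu(A)$ holds, $G$ is a lower semicontinuous measure, the fundamental estimate holds uniformly for $(F_{h_k})_k$ by the previous step, and $\mu(A)=c_1|A|\le c_1|\Om|<\infty$ for every $A\in\mathcal{A}$; hence $(F_{h_k}(\cdot,A))_k$ $\Gamma$-converges in $L^p(\Om)$ to $F(\cdot,A)$ for every $A\in\mathcal{A}$, which is the stated convergence.

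Finally I would check that $F\in\mathcal{M}$ and deduce \eqref{domGammalimit}. The upper bound in \eqref{controlFbyPsip} follows by testing the $\Gamma$-$\liminf$ inequality on the constant sequence $u_{h_k}\equiv u$: $F(u,A)\le\liminf_k F_{h_k}(u,A)\le c_1(\Psi_p(u,A)+\norma{u}_{L^p(A)}^p+|A|)$. For the lower bound I would take a recovery sequence $u_{h_k}\to u$ in $L^p(\Om)$ with $F(u,A)=\lim_k F_{h_k}(u_{h_k},A)$ and use $c_0\Psi_p(\cdot,A)\le F_{h_k}(\cdot,A)$ together with the $L^p(\Om)$-lower semicontinuity of $\Psi_p(\cdot,A)$ (Lemma \ref{Psipprop}) to get $F(u,A)\ge c_0\Psi_p(u,A)$; since $F$ is also a measure, $F\in\mathcal{M}$. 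Then \eqref{controlFbyPsip}, together with the fact that $\norma{u}_{L^p(A)}^p$ and $|A|$ are always finite, shows that $F(u,A)<\infty$ if and only if $\Psi_p(u,A)<\infty$, i.e. (by \eqref{Psip}) if and only if $u|_A\in\W1p(A)$, which is \eqref{domGammalimit}. The one genuinely delicate point is the stability of $\W1p(A)$ under the cut-off combination $\varphi u+(1-\varphi)v$ and the quantitative estimate \eqref{DM198} behind it, where the Lipschitz regularity of the coefficients of $X$ is used and where one must ensure that $c_X$ does not depend on $F$, so that the fundamental estimate is genuinely uniform on $\mathcal{M}$; everything else is a direct application of the cited results of \cite{DM} and of Lemma \ref{Psipprop}.
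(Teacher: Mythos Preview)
Your proposal is correct and follows essentially the same route as the paper: both fit $\mathcal{M}$ into the class $\mathcal F'$ of Theorem~\ref{DMThm194} with $G=c_0\Psi_p$ and $\sigma(A)=c_1|A|$, verify \eqref{DM198} via the Leibniz rule for the $X$-gradient and the convexity/doubling of $|\cdot|^p$ (the paper does this through the auxiliary Euclidean integrand $g(x,\xi)=c_0|C(x)\xi|^p$ and the abstract inequality \cite[(19.6)]{DM}, which is the same computation in different notation), then invoke Theorems~\ref{DMThm195} and~\ref{DMThm187}. Your explicit verification that the $\Gamma$-limit $F$ inherits \eqref{controlFbyPsip} via the constant sequence and a recovery sequence is a slightly more detailed version of the paper's one-line appeal to the lower semicontinuity and inner regularity of $\Psi_p$.
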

\begin{proof} Let us begin to prove that the fundamental estimate holds uniformly in $\mathcal M$ .
Let
\begin{equation}\label{gdef}
g(x,\xi):=\,c_0\,|C(x)\xi|^p\quad\text{ if }x\in\Om,\,\xi\in\Rn\,.
\end{equation}
Notice that, since the entries of matrix $C(x)$ are  Lipschitz continuous functions,
\begin{equation}\label{propg3}
g(x,\xi)\le\,c_0\sup_{\Om}\|C(x)\|^p\,|\xi|^p=\, c_2|\xi|^p\quad\text{ if }x\in\Om,\,\xi\in\Rn\,,
\end{equation}

\begin{equation}\label{propg1}
g(x,2\xi)=\,2^{p-1}\,2g(x,\xi)=\,c_3\,2g(x,\xi)\quad\text{ if }x\in\Om,\,\xi\in\Rn\,
\end{equation}
and
\begin{equation}\label{propg2}
g(x,\cdot)\text{ is convex on } \Rn\,.
\end{equation}
Thus, from \eqref{propg3}, \eqref{propg1} and \eqref{propg2}, arguing as in \cite[(19.6)]{DM}, it follows that
\begin{equation}\label{estg}
g(x,t\xi+(1-t)\eta+\zeta)\le\,c_3\left(g(x,\xi)+g(x,\eta)\right)+\,c_2\,|\zeta|^p
\end{equation}
for every $x\in\Om$, $t\in [0,1]$, $\xi,\,\eta\in\Rn$. We are going to apply Theorem \ref{DMThm194}. Observe that, choosing $G=\,c_0\,\Psi_p$,  from \eqref{controlFbyPsip}, \eqref{DM197} immediately holds with
 \[
 c_1\equiv \frac{c_1}{c_0},\;c_2\equiv c_1,\;\sigma(A)=\,c_1\,|A|\,.
 \]
 Let us show \eqref{DM198}. By \eqref{estg}, it follows that
 \[
 \begin{split}
 &G\left(\vf u+(1-\vf)v,A\right)=\,\int_A g\left(x,\vf Du+(1-\vf)Dv+(u-v) D\vf\right)dx\\
 &\le\,\int_A \left[c_3\left(g(x,Du)+g(x,Dv\right)+c_2 |D\vf|^p |u-v|^p\right]dx\\
 &\le\,c_3\left(G(u,A)+G(v,A)\right)+\,c_2\left(\max_{\Om}|D\vf|^p\right)\,\norma{u-v}_{L^p(A)}^p\,
\end{split}
 \]
 for each $u,\,v\in L^p(\Om)$, $A\in\mA$, $\vf\in\Co\infty_c(\Om)$ with $0\le\,\vf\le\,1$.
 Thus \eqref{DM198} holds with
 \[
 c_4\equiv c_3\text{ and }c_3c_4\equiv c_2.
 \]
Thus we get the desired conclusion. From Theorem \ref{DMThm195}, every sequence $(F_h)_h\subset\mathcal M$ has a subsequence  $(F_{h_k})_k$ $\bar\Gamma$-converging to  a functional $F:\,L^p(\Om)\times\mA\to [0,\infty]$ which is a measure. As each functional $F_h$  satisfies \eqref{controlFbyPsip}, the functional $F$ satisfies \eqref{controlFbyPsip}, since $\Psi_p$ is lsc and inner regular by Lemma \ref{Psipprop} and Remark \ref{charmeas}. By applying Theorem \ref{DMThm187}, we get that $(F_{h_k}(\cdot,A))_k$ $\Gamma$-converges to $F(\cdot,A)$ in $L^p(\Om)$ for each $A\in\mA$, since $\Om$ is bounded. Finally, by \eqref{controlFbyPsip}, \eqref{domGammalimit} follows.
\end{proof}
Let $p>\,1$ and let $c_1\ge\,c_0$, let $\Om\subset\Rn$ be a bounded open set. Let us denote by $\mathcal I=\,\mathcal I(p,c_0.c_1)$ the class of local functionals $F:\,L^p(\Om)\times\mA\to [0,\infty]$ for which there exists a Borel function $f_e:\,\Om\times\Rn\to [0,\infty)$ such that
\begin{itemize}
\item[(i)] claim (a) of properties defining $\mathcal H$ holds;
\item[(ii)] $c_0\,|C(x)\xi|^p\le\,f_e(x,\xi)\le\,c_1\left(\left|C(x)\xi\right|^p+1\right)$ a.e. $x\in\Om$, for each $\xi\in\Rn$.
\end{itemize}
\begin{teo}\label{DMThm204ext} For every sequence $(F_h)_h\subset\mathcal I$ there exist a subsequence $(F_{h_k})_k$ and a measure functional $F:\,L^p(\Om)\times\mA\to [0,\infty]$ such that $(F_{h_k}(\cdot,A))_k$  $\Gamma$-converges to $F(\cdot,A)$ in $L^p(\Om)$ and \eqref{domGammalimit} holds for every $A\in\mA$. Moreover  there exists a Borel function $f_e:\,\Om\times\Rn\to [0,\infty)$, convex in the second variable and satisfying (ii), for which \eqref{GammalimitFeucrepresW1p} holds.
\end{teo}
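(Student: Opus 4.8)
The plan is to reduce the statement to the $X$-integrand $\Gamma$-compactness theorem, Theorem~\ref{DMThm196ext}, by first replacing each Euclidean integrand $f_{e,h}$ defining $(F_h)_h$ by a genuine $X$-integrand. Since $f_{e,h}$ satisfies (ii), in particular the upper bound required in Lemma~\ref{keylemma}, that lemma gives the compatibility condition \eqref{f0Vx} for $f_{e,h}$; Theorem~\ref{reprvfeg} (available because $X$ satisfies (LIC)) then produces $f_h\in I_{m,p}(\Om,c_0,c_1)$ with $f_{e,h}(x,\xi)=f_h(x,C(x)\xi)$ and, through the pointwise identity \eqref{ifffe}, $F_h(u,A)=\int_A f_h(x,Xu)\,dx$ whenever $u|_A\in W^{1,1}_{\rm loc}(A)$. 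Let $F^*_h$ be the functional \eqref{Fstarh} attached to $f_h$. Then $F_h\ge F^*_h$ on $L^p(\Om)\times\mA$, with equality whenever $u|_A\in W^{1,1}_{\rm loc}(A)$ (in particular on $\Co1(A)$), and the uniform growth $(I_3)$ of $f_h$ places each $F^*_h$ in the class $\mathcal M(p,c_0,c_1)$: it is local, it is a measure by repeating the proof of Lemma~\ref{Psipprop} with the aid of Remark~\ref{extpropoW1pXloc}, and it satisfies \eqref{controlFbyPsip}.

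Applying Theorem~\ref{DMThm196ext} to $(F^*_h)_h$ yields a subsequence $(h_k)_k$ along which $F^*_{h_k}(\cdot,A)$ $\Gamma(L^p)$-converges, for every $A\in\mA$, to a local measure $F\in\mathcal M$ with $\mathrm{dom}F(\cdot,A)=W^{1,p}_X(A)$. The next task is to transfer this convergence to the original $F_{h_k}$. The $\Gamma$-lower-limit inequality $F(u,A)\le\liminf_k F_{h_k}(u_k,A)$ for $u_k\to u$ in $L^p(\Om)$ is immediate from $F_{h_k}\ge F^*_{h_k}$ and monotonicity of the $\Gamma$-lower limit. For a recovery sequence, fix $u$ with $u|_A\in W^{1,p}_X(A)$ (otherwise $F(u,A)=\infty$), take a recovery sequence $(v_k)_k$ for $(F^*_{h_k})_k$ at $u$ for the set $A$ — bounded in $W^{1,p}_X(A)$ thanks to $(I_3)$ — and use Meyers--Serrin (Theorem~\ref{Theorem 1.2.3}) to pick $w_k\in\Co1(A)\cap W^{1,p}_X(A)$ with $\|w_k-v_k\|_{W^{1,p}_X(A)}\to 0$ (extended by $v_k$ off $A$); since $w_k|_A\in W^{1,1}_{\rm loc}(A)$ we have $F_{h_k}(w_k,A)=F^*_{h_k}(w_k,A)$, while $|F^*_{h_k}(w_k,A)-F^*_{h_k}(v_k,A)|\to 0$ by the local Lipschitz bound for convex functions with growth $(I_3)$, which is uniform in $k$. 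Hence $(w_k)_k$ recovers $F(u,A)$ for $(F_{h_k})_k$ as well, and $F_{h_k}(\cdot,A)$ $\Gamma(L^p)$-converges to $F(\cdot,A)$ for every $A$.

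To finish, I would read off the integral form of $F$ from Theorem~\ref{DMThm201ext}, whose hypotheses $F$ satisfies: (a), (b) because $F\in\mathcal M$; (c) as a $\Gamma$-limit; (d) by inheritance from the translation invariance of the $f_h$; and (e) with $a\equiv c_1$, $b=c_1$, obtained by feeding $F^*_{h_k}(u,A)\le c_1\int_A(|Xu|^p+1)\,dx$ into the $\Gamma$-lower limit along the constant sequence $u_k\equiv u\in\Co1(A)$. Theorem~\ref{DMThm201ext} then gives a Borel $f$, convex in $\eta$, with $F(u,A)=\int_A f(x,Xu)\,dx$ on $W^{1,p}_{X;{\rm loc}}(A)$-functions and $0\le f(x,\eta)\le c_1(|\eta|^p+1)$; testing $F\ge c_0\Psi_p$ on the affine functions $u_\xi(x)=\langle\xi,x\rangle$, $\xi\in\bbQ^n$, followed by Lebesgue differentiation as in section~3.2 (using continuity of finite convex functions and surjectivity of $L_x$, which is where (LIC) is used) yields $f(x,\eta)\ge c_0|\eta|^p$, so $f\in I_{m,p}(\Om,c_0,c_1)$. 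Setting $f_e(x,\xi):=f(x,C(x)\xi)$ — Borel, convex in $\xi$, satisfying (ii) — and using $W^{1,p}(A)\subset W^{1,p}_X(A)$ from \eqref{inclclassSobsp} gives $F(u,A)=\int_A f_e(x,Du)\,dx$ whenever $u|_A\in W^{1,p}(A)$, i.e. \eqref{GammalimitFeucrepresW1p}. (Alternatively the Euclidean representation can be obtained directly by checking $\mathcal I\subseteq\mathcal H$ and invoking Theorem~\ref{DMThm203}, but the $\Gamma(L^p)$-convergence, the measure property and \eqref{domGammalimit} still require the argument above.)

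The main obstacle is precisely this reconciliation. Members of $\mathcal I$ are $+\infty$ outside $W^{1,1}_{\rm loc}$, so they lie in $\mathcal H$ but \emph{not} in $\mathcal M$ — in general $W^{1,p}_X(A)\not\subseteq W^{1,1}_{\rm loc}(A)$, already for $X=(\partial_1)$ on $\bbR^2$ — whereas the $\Psi_p$-controlled compactness machinery lives in $\mathcal M$; the passage to the companions $F^*_h$, legitimate exactly because (ii) forces \eqref{f0Vx}, together with the Meyers--Serrin transfer of $\Gamma$-convergence, is what bridges this gap. The delicate point inside the transfer is that the continuity modulus for the integrands must be uniform in $h$, and this is exactly what the uniform bound $(I_3)$ supplies.
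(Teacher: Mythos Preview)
Your argument is correct but follows a considerably longer path than the paper's. The paper proceeds in two strokes: it applies Theorem~\ref{DMThm196ext} \emph{directly} to $(F_h)_h\subset\mathcal I$ to extract the $\Gamma$-convergent subsequence, the measure structure of $F$, the two-sided control \eqref{controlFbyPsip2} by $\Psi_p$, and hence \eqref{domGammalimit}; then, noting that $\mathcal I\subset\mathcal H$ with $g(x,\xi)=c_0|C(x)\xi|^p$, it invokes the classical Euclidean representation Theorem~\ref{DMThm203} to produce $f_e$, and reads off property~(ii) by testing \eqref{controlFbyPsip2} on the affine functions $u_\xi$ and differentiating. No (LIC), no Theorem~\ref{reprvfeg}, no Theorem~\ref{DMThm201ext}. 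By contrast, you pass to the $X$-companions $F^*_h$, run Theorem~\ref{DMThm196ext} there, transfer the $\Gamma$-limit back by a Meyers--Serrin argument, and only then build $f_e$ from an $X$-integrand $f$ furnished by Theorem~\ref{DMThm201ext}. This buys you a clean treatment of the domain mismatch you flag in your last paragraph --- indeed $\mathcal I\not\subset\mathcal M$ because the upper bound in \eqref{controlFbyPsip} can fail on $W^{1,p}_X\setminus W^{1,1}_{\rm loc}$, a point the paper glosses over --- but it costs you the (LIC) hypothesis, which the statement of Theorem~\ref{DMThm204ext} does not assume. A lighter repair, still in the paper's spirit, is to observe that $F^*_h(\cdot,A)=\overline{F_h}(\cdot,A)$ by Theorem~\ref{Theorem 3.1.1}(iii) and then use \cite[Proposition~6.11]{DM} to equate the $\Gamma$-limits of $(F_h)$ and $(F^*_h)$ in one line (this is exactly the device used in the proof of Theorem~\ref{GammacompFheucl}), and to obtain $f_e$ via Theorem~\ref{DMThm203} rather than via Theorem~\ref{DMThm201ext}.
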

\begin{proof} By Theorem \ref{DMThm196ext}, for each $(F_h)_h\subset\mathcal I$ there exist a subsequence $(F_{h_k})_k$ and  an inner regular functional $F:\,L^p(\Om)\times\mA\to [0,\infty]$ such that $(F_{h_k}(\cdot,A))_k$  $\Gamma$-converges to $F(\cdot,A)$ in $L^p(\Om)$ for every $A\in\mA$. Moreover, since $\Psi_p$ is lsc and inner regular, for each $u\in L^p(\Om)$, $A\in\mA$,
\begin{equation}\label{controlFbyPsip2}
c_0\,\Psi_p(u,A)\le\,F(u,A)\le\,c_1\left(\Psi_p(u,A)+\,|A|\right)\
\end{equation}
where $\Psi_p$ is the local functional in \eqref{Psip}. If $g(x,\xi)$ is as in \eqref{gdef}, $\mathcal I(p,c_0.c_1)\subset\mathcal H(p,c'_1,c'_2, c'_3)$, for suitable $c'_i$ ($i=1,2,3$). From Theorem \ref{DMThm203} , there exists a Borel function $f_e:\,\Om\times\Rn\to [0,\infty)$, also convex in the second variable, for which \eqref{GammalimitFeucrepresW1p} holds.

 Let us now prove that  (ii) of properties defining  $\mathcal I$ holds. Let $u_\xi$ be  the function in \eqref{uxi}. From \eqref{controlFbyPsip2}, it follows that
\[
c_0\,\int_A|C(x)\xi|^p\,dx\le\,\int_A\Lef(x,\xi)\,dx\le\,c_1\,\left(|A|+\int_A|C(x)\xi|^p\,dx\right)
\]
for each $\xi\in\Rn$ and $A\in\mathcal A$. By means of the usual procedure, we can infer that there exists a negligeble set $\mathcal N\subset\Omega$ such that, for each $x\in\Omega\setminus\mathcal N$,
\[
c_0\,|C(x)\xi|^p\le\,f_e(x,\xi)\le\,c_1\left(\left|C(x)\xi\right|^p+1\right)\quad\forall\,\xi\in\bbQ^n\,.
\]
Then, since $\Lef(x,\cdot):\,\Rn\to [0,\infty)$ is continuous a.e. $x\in\Omega$, we can extend the previous inequality to all $\xi\in\Rn$.
\end{proof}
\begin{teo}\label{GammacompFheucl} Let $\Om\subset\Rn$ be a bounded open set, let $(f_h)_h\subset I_{m.p}(\Omega,c_0,c_1)$ and, for each $h$, let $F^*_h:\,L^p(\Om)\times\mA\to [0,\infty]$ be the local functional defined in \eqref{Fstarh}. Then, there exist a subsequence $(F^*_{h_k})_k$ and a measure functional $F:\,L^p(\Om)\times\mA\to [0,\infty]$ such that $(F^*_{h_k}(\cdot,A))_k$  $\Gamma$-converges to $F(\cdot,A)$ in $L^p(\Om)$ and \eqref{domGammalimit} holds for every $A\in\mA$. Moreover, there exists a Borel function $f_e:\,\Om\times\Rn\to [0,\infty)$, convex in the second variable, satisfying (ii) of properties defining $\mathcal I$, for which \eqref{GammalimitFeucrepresW1p} holds. 
\end{teo}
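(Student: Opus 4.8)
The plan is to deduce the statement from Theorem \ref{DMThm204ext}, by identifying the $\Gamma$-limit of the $F^*_h$'s with that of the associated Euclidean-integrand functionals. First I would introduce, for each $h$, the functional $F_h:\,L^p(\Om)\times\mA\to[0,\infty]$ of the form \eqref{Fh}--\eqref{Lefh}, with Euclidean integrand $\Lefh(x,\xi)=f_h(x,C(x)\xi)$. A direct check shows $F_h\in\mathcal I(p,c_0,c_1)$: property (i) in the definition of $\mathcal I$ is precisely the formula \eqref{Fh}, while the two-sided bound (ii) follows from condition $(I_3)$ for $f_h$ by substituting $\eta=C(x)\xi$, which yields $c_0|C(x)\xi|^p\le\Lefh(x,\xi)\le c_1(|C(x)\xi|^p+1)$ for a.e.\ $x\in\Om$ and every $\xi\in\Rn$; joint Borel measurability of $\Lefh$ follows from $(I_1)$, $(I_2)$ and the Lipschitz regularity of the entries of $C(x)$. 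Applying Theorem \ref{DMThm204ext} to the sequence $(F_h)_h$ then produces a subsequence $(F_{h_k})_k$, a measure functional $F:\,L^p(\Om)\times\mA\to[0,\infty]$ with $\mathrm{dom}\,F(\cdot,A)=\W1p(A)$ for every $A\in\mA$, and a Borel function $f_e:\,\Om\times\Rn\to[0,\infty)$, convex in the second variable and satisfying (ii) of the properties defining $\mathcal I$, such that $(F_{h_k}(\cdot,A))_k$ $\Gamma$-converges to $F(\cdot,A)$ in $L^p(\Om)$ and \eqref{GammalimitFeucrepresW1p} holds. This already delivers everything in the statement, except that the $\Gamma$-convergence so far concerns $F_{h_k}$ rather than $F^*_{h_k}$.

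The second step, which is the only genuinely new ingredient, is to pass from $F_{h_k}$ to $F^*_{h_k}$. For this I would show that, for every $A\in\mA$, the functional $F^*_h(\cdot,A)$ is exactly the $L^p(\Om)$-relaxation (lower semicontinuous envelope) of $F_h(\cdot,A)$. Indeed, fixing $A\in\mA$ and restricting the data to $A$, we have $f_h\in I_{m,p}(A,c_0,c_1)$ with $c_0>0$, so Theorem \ref{Theorem 3.1.1} applied on the open set $A$ gives $\bar F_1=F^*$ on $L^p(A)$, where here $F_1(u)=\int_A f_h(x,Xu)\,dx$ on $W^{1,1}_{\rm loc}(A)$ (and $+\infty$ elsewhere) and $F^*=F^*_h(\cdot,A)$. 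Since $Xu=C(x)Du$ a.e.\ on $A$ for $u\in W^{1,1}_{\rm loc}(A)$ (cf.\ \eqref{represXbyD}), we have $F_1=F_h(\cdot,A)$ as functionals on $L^p$, hence $\overline{F_h(\cdot,A)}=F^*_h(\cdot,A)$ as functionals on $L^p(A)$; and since both $F_h(\cdot,A)$ and $F^*_h(\cdot,A)$ depend on their argument only through its restriction to $A$, the same identity holds for the relaxation computed in $L^p(\Om)$ (any $L^p(A)$-recovery sequence for $u|_A$ can be glued with $u|_{\Om\setminus A}$, and restriction preserves $L^p$-convergence). Finally, by the invariance of $\Gamma$-limits under relaxation \cite[Proposition 6.11]{DM}, from the $\Gamma$-convergence of $(F_{h_k}(\cdot,A))_k$ to $F(\cdot,A)$ we obtain the $\Gamma$-convergence of $(F^*_{h_k}(\cdot,A))_k$ to the same $F(\cdot,A)$ in $L^p(\Om)$, for every $A\in\mA$. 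Collecting everything, $F$ is the required measure functional with $\mathrm{dom}\,F(\cdot,A)=\W1p(A)$, and it admits the Euclidean representation \eqref{GammalimitFeucrepresW1p} with $f_e$ convex in the second variable and satisfying (ii) of the properties defining $\mathcal I$.

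I expect the only delicate point to be the relaxation identity of the second step: one must be careful to invoke Theorem \ref{Theorem 3.1.1} on the subdomain $A$ (where $X$ still makes sense and $f_h$ still belongs to the relevant integrand class), and then to verify that passing from the $L^p(A)$-relaxation to the $L^p(\Om)$-relaxation changes nothing, which uses only the locality of the functionals involved. Everything else is a direct application of Theorems \ref{DMThm204ext} and \ref{Theorem 3.1.1} together with \cite[Proposition 6.11]{DM}; in particular, no use of (LIC) is needed here, consistently with the hypotheses of the statement.
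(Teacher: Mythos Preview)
Your proposal is correct and follows essentially the same approach as the paper: introduce the Euclidean-integrand functionals $F_h$, apply Theorem \ref{DMThm204ext} to extract the $\Gamma$-convergent subsequence with its Euclidean representation, then identify $F^*_h(\cdot,A)$ with the $L^p$-relaxation of $F_h(\cdot,A)$ via Theorem \ref{Theorem 3.1.1} (iii) and invoke \cite[Proposition 6.11]{DM}. Your treatment is in fact slightly more careful than the paper's, since you make explicit the verification that $F_h\in\mathcal I$ and the passage from the $L^p(A)$-relaxation to the $L^p(\Om)$-relaxation, which the paper leaves implicit.
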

\begin{proof} Let  $(\Lefh)_h$ denote the sequence of Euclidean integrands in \eqref{Lefh} and let $(F_h)_h$ be the sequence of local functionals in \eqref{Fh}. Since $(\Lefh)_h\subset\mathcal I$, by applying Theorem \ref{DMThm204ext}, there exist a subsequence $(F_{h_k})_k$ and a measure functional $F:\,L^p(\Om)\times\mA\to [0,\infty]$ such that $(F_{h_k}(\cdot,A))_k$  $\Gamma$-converges to $F(\cdot,A)$ in $L^p(\Om)$ for every $A\in\mA$. Moreover, there exists a Borel function $f_e:\,\Om\times\Rn\to [0,\infty)$, convex in the second variable, satisfying (ii), for which \eqref{GammalimitFeucrepresW1p} holds.

By Theorem \ref{Theorem 3.1.1} (iii), it follows that, for each $h\in\bbN$, $A\in\mA$,
\begin{equation}\label{relaxedFh}
F^*_h(\cdot,A)=\,\bar F_h(\cdot,A)
\end{equation}
where $\bar F_h(\cdot,A):\,L^p(\Om)\to [0,\infty]$ denotes the relaxed functional of $F_h(\cdot,A):\,L^p(\Om)\to [0,\infty]$ with respect to the $L^p(\Om)$ topology (see \eqref{3.3}). By \eqref{relaxedFh} and a well-known property of $\Gamma$-convergence (see \cite[Propostion 6.11]{DM}), we also get  that $(F^*_{h_k}(\cdot,A))_k$  $\Gamma$-converges to $F(\cdot,A)$ in $L^p(\Om)$ for every $A\in\mA$.
\end{proof}

%Let us now carry out the second step of our proof strategy. The main result turns out to be the closure, w.r.t.  $\Gamma(L^p(\Om))$-convergence, of subclasses $J_1,\,J_2,\,J_3$ introduced above .

%\begin{teo}\label{GammaclosureJi}Let $\Om\subset\Rn$ be a bounded open set and let $X=(X_1,\dots,X_m)$   satisfy (LIC) on $\Om$. Let  $(f_h)_h\subset J_i$ ($i=1,2,3$) and, for each $h$, let $F^*_h:\,L^p(\Om)\times\mA\to [0,\infty]$ be the local functional defined in \eqref{Fstarh}.  Assume that:
%\begin{itemize}
%\item [(i)] there exists a measure functional $F:\,L^p(\Om)\times\mA\to [0,\infty]$ such that  $(F^*_h)_h$ $\Gamma$-converges to $F(\cdot,A)$ in $L^p(\Om)$ and \eqref{domGammalimit} holds  for each $A\in\mA$; 
%\item[(ii)]   there exists a Borel function $f_e:\,\Om\times\Rn\to [0,\infty)$, convex in the second variable, satisfying (ii), of properties defining $\mathcal I$, for which $F$ admits the integral representation  in \eqref{GammalimitFeucrepresW1p}.
%\item[(iii)] \eqref{domGammalimit} holds for every $A\in\mA$;
%%\item[(iv)] assume that,  in the case of subclasses $J_i$ with $i=1,2$, the family of vector fields $X=(X_1,\dots,X_m)$  simply satisfies (LIC) on $\Om$, while  in the case of subclass $J_3$, $X$ is a family of vector fields generating a Carnot group structure (see Definition \ref{Cargroupgen}).
%
%\end{itemize}
%
%Then there exists $f\in J_i$  for which $F$ admits integral representaion \eqref{GammalimitF}.
%\end{teo}

\begin{teo}\label{GammaclosureImp}Let $\Om\subset\Rn$ be a bounded open set and let $X=(X_1,\dots,X_m)$   satisfy (LIC) on $\Om$. Let  $(f_h)_h\subset I_{m,p}(\Om,c_0,c_1)$ and, for each $h$, let $F^*_h:\,L^p(\Om)\times\mA\to [0,\infty]$ be the local functional defined in \eqref{Fstarh}.  Assume that:
\begin{itemize}
\item [(i)] there exists a measure functional $F:\,L^p(\Om)\times\mA\to [0,\infty]$ such that  $(F^*_h)_h$ $\Gamma$-converges to $F(\cdot,A)$ in $L^p(\Om)$ and \eqref{domGammalimit} holds  for each $A\in\mA$; 
\item[(ii)]   there exists a Borel function $f_e:\,\Om\times\Rn\to [0,\infty)$, convex in the second variable, satisfying (ii) of properties defining $\mathcal I$, for which $F$ admits the integral representation  in \eqref{GammalimitFeucrepresW1p}.
\item[(iii)] \eqref{domGammalimit} holds for every $A\in\mA$.
%\item[(iv)] assume that,  in the case of subclasses $J_i$ with $i=1,2$, the family of vector fields $X=(X_1,\dots,X_m)$  simply satisfies (LIC) on $\Om$, while  in the case of subclass $J_3$, $X$ is a family of vector fields generating a Carnot group structure (see Definition \ref{Cargroupgen}).
\end{itemize}
Then, there exists $f\in I_{m,p}(\Om,c_0,c_1)$ for which $F$ admits the integral representation \eqref{GammalimitF}.
\end{teo}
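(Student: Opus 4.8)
The plan is to check that $F$ satisfies all the hypotheses (a)--(e) of the integral representation Theorem~\ref{DMThm201ext}, to apply that theorem, and then to sharpen the resulting growth estimates so that the integrand lies in $I_{m,p}(\Om,c_0,c_1)$ and the representation acquires the form \eqref{GammalimitF}. Hypothesis (b) is exactly assumption (i), and (c) holds since $F(\cdot,A)$ is an $L^p(\Om)$-$\Gamma$-limit, hence $L^p(\Om)$-lower semicontinuous (\cite[Proposition~6.8]{DM}). For (a) and (d) I would pass these properties to the $\Gamma$-limit from the approximating functionals: every $F^*_h$ is local and invariant under the addition of constants, since $X(u+c)=Xu$, since $\Om$ bounded forces $u+c\in W^{1,p}_X(A)\iff u\in W^{1,p}_X(A)$, and since $u=v$ a.e.\ on $A$ implies both $Xu=Xv$ a.e.\ on $A$ and $u|_A\in W^{1,p}_X(A)\iff v|_A\in W^{1,p}_X(A)$; a standard recovery-sequence argument (using Theorem~\ref{properGammaconv}~(iv)) then transfers locality and constant-invariance to $F$.

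Hypothesis (e) is the delicate one, because it is formulated for $\Co1$ functions on an arbitrary $A\in\mA$, while the only representation available so far, \eqref{GammalimitFeucrepresW1p}, holds only for functions in $W^{1,p}(A)$; here I would use that $F(u,\cdot)$, being a measure, is inner regular. Given $u\in\Co1(A)$ and an open set $A'\Subset A$, choose $\vf\in\Co1_c(A)$ with $\vf\equiv 1$ on $A'$ and set $w:=u\vf\in\Co1_c(\Om)\subset W^{1,p}(\Om)$; then locality of $F$, representation \eqref{GammalimitFeucrepresW1p}, property (ii) of the class $\mathcal I$, and the identities $Dw=Du$, $Xu=C(x)Du$ on $A'$ give
\[
F(u,A')=F(w,A')=\int_{A'}\Lef(x,Du)\,dx\le c_1\int_{A'}\bigl(|C(x)Du|^p+1\bigr)\,dx=c_1\int_{A'}\bigl(|Xu|^p+1\bigr)\,dx.
\]
Taking the supremum over $A'\Subset A$ and using inner regularity of $F(u,\cdot)$ yields (e) with $a\equiv c_1\in L^1_{\rm loc}(\Om)$ and $b=c_1$.

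With (a)--(e) verified, Theorem~\ref{DMThm201ext} provides a Borel function $f:\Om\times\Rm\to[0,\infty]$, convex in the second variable and satisfying $0\le f(x,\eta)\le c_1(|\eta|^p+1)$ for a.e.\ $x$, such that $F(u,A)=\int_A f(x,Xu)\,dx$ whenever $u|_A\in W^{1,p}_{X;{\rm loc}}(A)$, in particular whenever $u|_A\in W^{1,p}_X(A)$ (Remark~\ref{extpropoW1pXloc}~(iv)). It remains to obtain the coercivity $f(x,\eta)\ge c_0|\eta|^p$. For this I would test both representations of $F$ against the linear maps $u_\xi$ of \eqref{uxi} with $\xi\in\bbQ^n$, which belong to $W^{1,p}(A)\subset W^{1,p}_X(A)$ since $\Om$ is bounded: comparing \eqref{GammalimitFeucrepresW1p} with Theorem~\ref{DMThm201ext}~(i) and using $\Lef(x,\xi)\ge c_0|C(x)\xi|^p$ gives $\int_A f(x,C(x)\xi)\,dx\ge c_0\int_A|C(x)\xi|^p\,dx$ for every $A\in\mA$; choosing $A=B(x,r)$, letting $r\to 0^+$, invoking Lebesgue's differentiation theorem, a countable union over $\bbQ^n$ and continuity of $f(x,\cdot)$ upgrades this to $f(x,C(x)\xi)\ge c_0|C(x)\xi|^p$ for a.e.\ $x$ and all $\xi\in\Rn$; finally (LIC) makes $\xi\mapsto C(x)\xi$ surjective onto $\Rm$ for a.e.\ $x$, so $f(x,\eta)\ge c_0|\eta|^p$ a.e.\ (this is the same Lebesgue-point argument already used in the proof of Theorem~\ref{reprvfeg}). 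Hence $f\in I_{m,p}(\Om,c_0,c_1)$; since moreover $\int_A f(x,Xu)\,dx<\infty$ for $u|_A\in W^{1,p}_X(A)$ (as $\Om$ is bounded), while assumption (iii) forces $F(u,A)=+\infty$ as soon as $u|_A\notin W^{1,p}_X(A)$, the representation \eqref{GammalimitF} follows. The main obstacle is the $\Co1$-versus-$W^{1,p}$ mismatch in (e), circumvented via inner regularity and the cut-off above; the rest is routine.
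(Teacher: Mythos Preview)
Your argument is correct, and its overall architecture---verify (a)--(e), apply Theorem~\ref{DMThm201ext}, then show $f\in I_{m,p}(\Om,c_0,c_1)$---coincides with the paper's. The difference lies in how $f$ is obtained and how $(I_3)$ is checked. The paper does \emph{not} let Theorem~\ref{DMThm201ext} produce $f$ ab initio; instead it first observes that $\Lef$ satisfies the hypotheses of Lemma~\ref{keylemma}, hence~\eqref{f0Vx}, and then \emph{defines} $f$ explicitly by~\eqref{deffbyfe}, i.e.\ $f(x,\eta)=\Lef(x,L_x^{-1}(\eta))$. From this formula and the identity $C(x)L_x^{-1}(\eta)=\eta$, the two-sided bound $(I_3)$ drops out immediately from property~(ii) of~$\mathcal I$, with no Lebesgue-point argument needed. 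Theorem~\ref{reprvfeg} (together with Remark~\ref{represFwrtXW1p}) then gives the representation $F(u,A)=\int_A f(x,Xu)\,dx$ for $u\in W^{1,p}(A)$, and Theorem~\ref{DMThm201ext} is invoked only to extend it to $W^{1,p}_X(A)$; the uniqueness clause in Theorem~\ref{reprvfeg} ensures that the abstract integrand $f^*$ produced there coincides with the explicit $f$. Your route avoids the explicit construction and the uniqueness step, at the price of an extra Lebesgue-differentiation argument for the lower bound---a fair trade. Your cut-off verification of~(e) is in fact a shade more careful than the paper's, which simply asserts the bound for $u\in W^{1,p}(A)$ without discussing $\Co1(A)\setminus W^{1,p}(A)$.
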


\begin{proof}%[Proof of Theorem \ref{GammaclosureImp}]
Let us first notice that $\Lef$ satisfies  the assumptions of Lemma \ref{keylemma}. Thus we can assume that it satisfies \eqref{f0Vx}.

Let $f:\,\Om\times\Rm\to [0,\infty]$ be the function   in \eqref{deffbyfe}. Let us prove that $f\in I_{m,p}(\Om,c_0,c_1)$. Properties $(I_1)$ and $(I_2)$ follow from Therem \ref{reprvfeg}. Since $\Lef$ satisfies  (ii) of properties defining class $\mathcal I$, from \eqref{coincfef}, we can infer $(I_3)$.

From  Theorem \ref{reprvfeg} and Remark \ref{represFwrtXW1p}, $F$ admits the integral representation \eqref{GammalimitF}, but only for functions $u\in W^{1,p}(A)$. We are going to extend this representation to all functions $u\in W_X^{1,p}(A)$, by means of Theorem \ref{DMThm201ext} about the integral representation of local functionals with respect to $X$-gradient. Being $F$ a  $\Gamma$-limit, it is lsc (see \cite[Proposition 6.8]{DM}) and, by \cite[Proposition 16.15]{DM}, it is also local and, by assumptions, a measure. Thus assumptions (a), (b) and (c) of Theorem \ref{DMThm201ext} are satisfied. Let us prove assumtion (d). For every $h\in\bbN$, we have $F^*_h(u+c,A)=\, F^*_h(u,A)$ whenever $u\in L^p(\Omega)$, $c\in\bbR$. Then it is easy to see that this property also holds for the $\Gamma$-limit $F$. Let us now prove assumption (e). By the integral representation   \eqref{GammalimitFeucrepresW1p} and  Remark \ref{represFwrtXW1p}, it follows that, for each $A\in\mA$, $u\in W^{1,p}(A)$
\begin{equation}\label{conicintegrfef}
\begin{split}
F(u,A)&=\,\int_A\Lef(x,Du)\,dx=\,\int_Af(x,Xu)\,dx\\
&\le\,c_1\left(\int_A|Xu|^p+|A|\right)
\end{split}
\end{equation}
which implies property (e). 
%%%%%%%%%%%%%%%%%%%%%%%%%%%%%%%%%%%%%%%%%%%%%%%%%%%%%%%%%%%%%%%
%Eventually let us show property (f). By the integral representation   \eqref{GammalimitFeucrepresW1p}  it follows that, for each $x\in\Om$, $\xi\in\Rn$
%\[
%F(u_\xi,B(x,r) )=\,\int_{B(x,r)}\Lef (y,\xi)\,dy
%\]
%which implies that, for a.e. $x\in\Om$, there exists
%\[
%\lim_{r\to 0^+}\frac{F(u_\xi,B(x,r) )}{|B(x,r)|}=\,\Lef(x,\xi)\,.
%\]
%By \eqref{identcruc}, we get that assumption (f) is satisfied, too. 
%%%%%%%%%%%%%%%%%%%%%%%%%%%%%%%%%%%%%%%%
Thus there exists a Borel function $f^*:\,\Om\times\Rm\to [0,\infty]$ satisfying property (i) and (ii) of Theorem \ref{DMThm201ext}. In particular,  for each $A\in\mA$, $u\in\W1p(A)$
\[
F(u,A)=\int_A f^*(x,Xu)\,dx\,.
\]
By \eqref{conicintegrfef} and Theorem \ref{reprvfeg}, we get that $f(x,\eta)=\,f^*(x,\eta)$ for a.e. $x\in\Om$ and for each $\eta\in\Rm$. This concludes the proof.
\end{proof}
\begin{proof}[Proof of Theorem \ref{mainthm}] The proof immediately follows from Theorems \ref{GammacompFheucl} and \ref{GammaclosureImp}.
\end{proof}
We now introduce two integrand function subclasses $J_i\subset I_{m,p}(\Om,c_0,c_1)$ ($i=1,2$) for which the associated functionals in \eqref{Fstar} are still compact with respect to $\Gamma$- convergence in $L^p(\Om)$-topology. Let $\Omega\subset\Rn$ be  a bounded open set and let us fix $0<\,c_0\le\,c_1$.
\begin{itemize}
\item $J_1\equiv J_1(\Om,c_0,c_1)$ is the subclass of $I_{m,2}(\Omega,c_0,c_1)$ composed of integrand functions $f\in I_{m,2}(\Omega,c_0,c_1)$ which are quadratic forms with respect to $\eta$, that is,
\[
f(x,\eta)=\,\langle a(x)\eta,\eta\rangle=\,\sum_{i,j=1}^m a_{ij}(x)\eta_i\eta_j\quad\text{a.e. }x\in\Om, \forall\,\eta\in\Rm\,,
\]
with $a(x)=[a_{ij}(x)]$  $m\times m$ symmetric matrix . 

\item The subclass $J_2\equiv J_2(\Om,c_0,c_1)$ is composed by integrand functions $f\in I_{m,p}(\Omega,c_0,c_1)$ such that $f=\,f(\eta)$, that is, $f$ is independent of $x$.
\end{itemize}

\begin{teo}\label{subclasscomp} Let $\Om\subset\Rn$ be a bounded open set and let $X=(X_1,\dots,X_m)$   satisfy (LIC) on $\Om$. Let $(f_h)_h\subset J_i(\Om,c_0,c_1)$ ($i=1,2$) and, for each $h$, let $F^*_h:\,L^p(\Om)\times\mA\to [0,\infty]$ be the local functional defined in \eqref{Fstarh}.  Then, up to a subsequence, there exist a local functional $F:\,L^p(\Om)\times\mA\to [0,\infty]$ and $f\in J_i(\Om,c_0,c_1)$ such that 
\begin{itemize}
\item[(i)] \eqref{GammalimitFeuc} holds;
\item[(ii)] $F$ admits representation \eqref{GammalimitF}.
\end{itemize}
\end{teo}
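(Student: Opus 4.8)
The plan is to mimic, within the two subclasses $J_i$, the same two-step strategy used for Theorem~\ref{mainthm}: first obtain a $\Gamma$-convergent subsequence whose limit is represented by an Euclidean integrand (this is already supplied by Theorem~\ref{GammacompFheucl}, since $J_i\subset I_{m,p}(\Om,c_0,c_1)$), and then upgrade the $X$-integrand $f$ produced by Theorem~\ref{GammaclosureImp} to an element of $J_i$ by showing that the defining structural feature of $J_i$ (being a quadratic form in $\eta$, resp.\ being independent of $x$) is stable under this procedure. So first I would invoke Theorem~\ref{mainthm} verbatim: up to a subsequence there are a local functional $F:\,L^p(\Om)\times\mA\to[0,\infty]$ and $f\in I_{m,p}(\Om,c_0,c_1)$ with \eqref{GammalimitFeuc} and \eqref{GammalimitF}. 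It remains only to prove $f\in J_i(\Om,c_0,c_1)$.

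For the case $i=2$ (integrands independent of $x$), the key observation is that the whole sequence $F^*_h$, hence its $\bar\Gamma$-limit $F$, is translation invariant in the space variable on the scale of $\Om$: since each $f_h$ does not depend on $x$, for any vector $\tau$ and any $A$ with $A+\tau\subset\Om$ one has $F^*_h(u(\cdot-\tau),A+\tau)=F^*_h(u,A)$; however, because the vector fields $X_j$ themselves need \emph{not} be translation invariant (e.g.\ Grushin, Heisenberg), this symmetry of the functionals does \emph{not} immediately descend to $f$. Instead I would argue at the level of the Euclidean integrand: the recession/blow-up argument is cleaner. The honest route is to use the localisation/derivation formula for the $\Gamma$-limit at $x_0$: for a.e.\ $x_0\in\Om$ the value $f(x_0,\eta)$ is obtained as a limit of rescaled minimum problems for the $f_h$'s on small balls $B(x_0,r)$, and since $f_h(x,C(x)\xi)$ has $x$-dependence entering only through the Lipschitz matrix $C(x)$, blowing up at $x_0$ freezes $C(x)\to C(x_0)$ and the ``frozen'' problem has constant integrand $f_h(C(x_0)\cdot)$; one then checks that the limit integrand, re-expressed through $L^{-1}(x_0)$ via \eqref{deffbyfe}, cannot depend on $x_0$ because the family $\{f_h\}$ was $x$-free to begin with. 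The cleanest way to make this rigorous is: apply Theorem~\ref{mainthm} to the sequence $(f_h)_h$ and, separately, to every ``translated'' sequence $(f_h^{(v)})_h$ defined by using the shifted vector fields, and compare; but since that changes $X$, I would rather phrase the argument purely through the Euclidean integrand $f_e$ of Theorem~\ref{GammacompFheucl}: show that $f_e$ satisfies $f_e(x,\xi)=\varphi\big(C(x)\Pi_x(\xi)\big)$ for a single function $\varphi$ independent of $x$, which forces $f(x,\eta)=\varphi(\eta)$ through \eqref{deffbyfe} and $f\in J_2$. I expect this frozen-coefficient comparison to be the main obstacle.

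For the case $i=1$ (quadratic forms in $\eta$, with $p=2$), the situation is structurally easier because the class of quadratic functionals is preserved under $\Gamma$-convergence in a strong sense (this is the classical theory of $\Gamma$-convergence of quadratic forms / $G$-convergence of the associated symmetric operators, cf.\ the discussion already cited via \cite{DM} and the remark on \cite{ACM}). Concretely: each $F^*_h$ is a nonnegative quadratic form on $W^{1,2}_X(\Om)$ with $c_0\Psi_2\le F^*_h\le c_1(\Psi_2+\|\cdot\|^2)$ pointwise, its $\Gamma$-limit $F(\cdot,A)$ (which exists by Theorem~\ref{mainthm}) is then lower semicontinuous, local, a measure, positively $2$-homogeneous in $u$ and satisfies the parallelogram identity in the limit (the parallelogram identity passes to $\Gamma$-limits for this equicoercive family, by the $\liminf$/recovery-sequence characterisation in Theorem~\ref{properGammaconv}(iv), testing on $u\pm v$ with a common recovery sequence), hence $F(\cdot,A)$ is itself a quadratic form on $W^{1,2}_X(A)$. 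Then the $X$-integrand $f$ of Theorem~\ref{mainthm} representing $F$ must, for a.e.\ $x$, be a quadratic form in $\eta$: indeed $f(x,\cdot)$ is convex with $2$-homogeneity inherited from $F$ after a Lebesgue-point/blow-up localisation at $x$ exactly as in the proof of Theorem~\ref{DMThm201ext}, and a convex positively $2$-homogeneous function satisfying the parallelogram identity is a quadratic form $\langle a(x)\eta,\eta\rangle$ with $a(x)$ symmetric; the bounds $(I_3)$ with exponent $p=2$ give the required two-sided control, so $a(x)=[a_{ij}(x)]$ with $c_0|\eta|^2\le\langle a(x)\eta,\eta\rangle\le c_1(|\eta|^2+1)$ and in fact (homogeneity kills the $+1$) $c_0|\eta|^2\le\langle a(x)\eta,\eta\rangle\le c_1|\eta|^2$, i.e.\ $f\in J_1(\Om,c_0,c_1)$. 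Measurability of $x\mapsto a_{ij}(x)$ follows from that of $f$ via polarisation $a_{ij}(x)=\tfrac14\big(f(x,e_i+e_j)-f(x,e_i-e_j)\big)$. The only delicate point is checking that the parallelogram law genuinely survives the $\Gamma$-limit; this is where the equicoercivity and the fundamental estimate (Theorem~\ref{DMThm196ext}) are used to produce, for each $u,v$, recovery sequences compatible enough to evaluate $F(u+v,A)+F(u-v,A)$ and $2F(u,A)+2F(v,A)$ as limits of the same quantities for $F^*_{h_k}$. I would present this as the heart of the argument for $i=1$, and then note that $i=2$ is handled by the frozen-coefficient blow-up described above, concluding that in both cases \eqref{GammalimitFeuc} and \eqref{GammalimitF} hold with $f\in J_i(\Om,c_0,c_1)$.
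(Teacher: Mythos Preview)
Your plan for $J_1$ is workable but more laborious than the paper's. The paper does not re-prove that the parallelogram law survives the $\Gamma$-limit; instead it passes to the Euclidean side: since $f_{h,e}(x,\xi)=\langle C(x)^T a_h(x)C(x)\,\xi,\xi\rangle$ is a quadratic form in $\xi$, the classical theory (\cite[Theorem~22.1]{DM}) already gives that the Euclidean limit integrand $f_e$ is quadratic, $f_e(x,\xi)=\langle a_e(x)\xi,\xi\rangle$. Then one simply reads off $f(x,\eta)=f_e(x,L_x^{-1}(\eta))=\langle a(x)\eta,\eta\rangle$ with $a(x)=(B(x)^{-1})^T C(x)\,a_e(x)\,C(x)^T B(x)^{-1}$ from \eqref{deffbyfe} and \eqref{L-1repr}. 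So the whole ``quadraticity is preserved'' step is outsourced to the Euclidean theory rather than redone in the $X$-setting; your direct route via the parallelogram identity would eventually arrive at the same conclusion, but requires you to justify the compatibility of recovery sequences for $u\pm v$, which you yourself flag as delicate.

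For $J_2$ your proposal has a genuine gap. The frozen-coefficient / blow-up heuristic you sketch is not easy to make rigorous here: the $x$-dependence of the Euclidean integrand $f_{h,e}(x,\xi)=f_h(C(x)\xi)$ enters through $C(x)$, and after $\Gamma$-convergence there is no a~priori reason the limit $f_e$ factors as $\varphi(C(x)\xi)$ for a single $\varphi$; the localisation formula you allude to is not established in the paper and would require substantial additional work. The paper's argument is entirely different and much more direct. Since each $f_h:\Rm\to[0,\infty)$ is convex with the uniform bound $(I_3)$, the family $(f_h)_h$ is locally equi-Lipschitz; Ascoli--Arzel\`a then yields a subsequence with $f_h\to f$ uniformly on bounded sets, and automatically $f\in J_2$. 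One then defines $\tilde F$ by \eqref{GammalimitF} with this $f$ and shows $\tilde F=F$. The inequality $F\le\tilde F$ is immediate from the $\Gamma$-$\liminf$ inequality plus pointwise convergence $F^*_h(u,A)\to\tilde F(u,A)$ (dominated convergence). The reverse inequality is the real content: for a recovery sequence $u_h\to u$ with $F^*_h(u_h,A)\to F(u,A)$, one mollifies and uses Jensen's inequality in the form of Proposition~\ref{convolconv}~(ii), $\int_{A'}f_h(\rho_\eps\ast\overline{Xu_h})\,dx\le\int_A f_h(Xu_h)\,dx$, together with the commutator remainder $R_{\eps,h}=\int_{A'}\big(f_h(X(\rho_\eps\ast\bar u_h))-f_h(\rho_\eps\ast\overline{Xu_h})\big)\,dx$, which one controls first as $h\to\infty$ (uniform convergence on $A'$) and then as $\eps\to 0$. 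This mollification-plus-Jensen step is the idea your proposal is missing.
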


\begin{proof}{\bf 1st case.}  Let us first show the conclusion for the subclass $J_1$.

Let $(f_h)_h\subset J_1$. By definition, we can assume that
\[
f_h(x,\eta):=\,\langle a_h(x)\eta,\eta\rangle\quad x\in\Om,\,\eta\in\Rm\,,
\]
where $a_h(x)=[a_{h,ij}(x)]$ is a $m\times m$ symmetric matrix  satisfying
\begin{equation}\label{ahcontrol}
c_0\,|\eta|^2\le\,\langle a_h(x)\eta,\eta\rangle\le\,c_1\left(\left|\eta\right|^2+1\right)\text{ a.e. }x\in\Om,\,\forall\eta\in\Rm\,
\end{equation}
\begin{equation}\label{ahLinfty}
a_{h,ij}\in L^\infty(\Om)\text{ for each }i,j=1,\dots,m,\,h\in\bbN\,.
\end{equation}

Applying Theorem \ref{mainthm}, up to a subsequence, there exist a local functional $F:\,L^p(\Om)\times\mA\to [0,\infty]$ and $f\in I_{m,2}(\Om,c_0,c_1)$ such that  \eqref{GammalimitFeuc} holds and $F$ admits representation \eqref{GammalimitF}. We have only to prove that 
\begin{equation}\label{finJ1}
f\in J_1\,.
\end{equation}

Notice that we can also assume that $F$ admits representation \eqref{GammalimitFeucrepresW1p} with
\[
\Lef(x,\xi):=\,f(x,C(x)\xi)\text{ for a.e. }x\in\Om\text{, for each }\xi\in\Rn\,.
\]
Moreover, by Theorem \ref{reprvfeg} (see \eqref{deffbyfe}  and \eqref{L-1repr}), it also holds the opposite representation, that is, for each $x\in\Om_X$,
\begin{equation}\label{coincfef0}
f(x,\eta)=\Lef(x,L_x^{-1}(\eta))\quad\forall\,\eta\in\Rm\,,
\end{equation}
with
\[
L_x^{-1}(\eta):=\,C(x)^T B(x)^{-1}\eta\,.
\]
Let us now consider the sequence of Euclidean integrands
\[
\begin{split}
f_{h,e}(x,\xi)&:=\,f_h(x,C(x)\xi)=\,\langle a_h(x)C(x)\xi,C(x)\xi\rangle\\
&=\,\langle C(x)^Ta_h(x)C(x)\xi,\xi\rangle=\,\langle a_{h,e}(x)\xi,\xi\rangle\,
\end{split}
\]
and the related local functionals $F_h:\,L^p(\Om)\times\mA\to [0,\infty]$ defined in \eqref{Fh}. Since $F_h(u,A)=\,F^*_h(u,A)$ for each $u\in W^{1,1}_{\rm loc}(A)$, by using well-known results of $\Gamma$-convergence for quadratic functionals (see  \cite[Theorem 22.1]{DM} and Remark \ref{GammalimimpliesbarGammalim}, one can easily prove that  there exists a $n\times n$ symmetric matrix $a_e(x)=\,[a_{e,ij}(x)]$, with $a_{e,ij}\in L^\infty(\Om)$ for each $i,j=1,\dots,n$ such that
\begin{equation*}
\Lef(x,\xi)=\,\langle a_e(x)\xi,\xi\rangle\text{ a.e. } x\in\Om,\,\forall\,\xi\in\Rn\,.
\end{equation*}
%,  for a.e. $x\in\Om$,
%\begin{equation}%\label{identcruc}
%\Lef(x,\xi_{N_x}+\zeta)=\,\Lef(x,\zeta)\quad\forall\,\xi,\,\zeta\in\Rn\,.
%\end{equation}
%Since $\Lef(x,\cdot)$ is a quadratic form,  it follows that
%\begin{equation}\label{propqf}
%\Lef(x,\xi_{N_x}+\zeta)=\,\Lef(x,\xi_{N_x})+\Lef(x,\zeta)+2\langle a_e(x)\xi_{N_x},\zeta\rangle\,
%\end{equation}
%and 
%\begin{equation}\label{CS}
%|\langle a_e(x)\xi_{N_x},\zeta\rangle|\le\,\sqrt{\Lef(x,\xi_{N_x})}\,\sqrt{\Lef(x,\zeta)}\,.
%\end{equation}
%By \eqref{eqfeonNx}, \eqref{propqf} and \eqref{CS},\eqref{identcruc} follows, Notice also that \eqref{identcruc} is equivalent to \eqref{f0Vx}, that is, for a.e. $x\in\Om$
%\[
%\Lef(x,\xi)=\,\Lef(x,\xi_{V_x})\quad\forall\,\xi\in\Rn\,.
%\]
% Let  $f:\,\Om\times\Rm\to [0,\infty]$ be the function   in \eqref{deffbyfe}. Then, by its definition and \eqref{L-1repr}, we get that, for each $x\in\Om_X$
By \eqref{coincfef0}, for each $x\in\Om_X$,
\begin{equation}\label{coincfef}
\begin{split}
f(x,\eta)&:=\Lef(x,L_x^{-1}(\eta))=\langle a_e(x)C(x)^T B(x)^{-1}\eta,C(x)^T B(x)^{-1}\eta\rangle\\
&=\,\langle ({B(x)^{-1}})^TC(x)a_e(x)C(x)^T B(x)^{-1}\eta,\eta\rangle=\,\langle a(x)\eta,\eta\rangle
\end{split}
\end{equation}
with 
\[
a(x):=\,(B(x)^{-1})^TC(x)a_e(x)C(x)^T B(x)^{-1}\,,
\]
$m\times m$ symmetric matrix. Then $f(x,\cdot)$ turns out to be a quadratic form on $\Rm$, induced by the matrix $a(x)$ for a.e.  $x\in\Om$.  Thus \eqref{finJ1} follows.

{\bf 2nd case. } Let us now deal with the subclass $J_2$. Let $(f_h)_h\subset J_2$. Notice that $f_h:\,\Rm\to [0,\infty)$, $h\in\bbN$, is a sequence of locally bounded, convex functions. Thus, by a well-known result  (see, for instance, \cite[Proposition 5.11]{DM}), we can infer that $(f_h)_h$ is also locally equi-Lipschitz continuous. From Ascoli-Arzel\`a'  s theorem, we can assume that, up to a subsequence, there exists $f\in J_2$ such that\begin{equation}\label{convpointfh}
f_h\rightarrow f\text{ uniformly on bounded sets of }\Rn\text{ as }h\to\infty\,.
\end{equation}
Let us define $\tilde F:\,L^p(\Om)\times\mA\to [0,\infty]$ as
\[
\tilde F(u,A):=
\displaystyle{\begin{cases}
\int_{A}f(Xu(x))dx&\text{ if }A\in\mA,u\in W^{1,p}_X(A)\\
\infty&\text{ otherwise}
\end{cases}
\,.
}
\]
Let us now prove that, for each $A\in\mA$,
\begin{equation}\label{convpointfunctls}
\lim_{h\to\infty}F^*_h(u,A)=\,\tilde F(u,A)\quad\forall\,u\in W^{1,p}_X(A)\,.
\end{equation}
Let us fix $A\in\mA$ and  $u\in W_X^{1,p}(A)$. Since $|Xu(x)|<\,\infty$ for a.e. $x\in A$, by \eqref{convpointfh}, it follows that
\begin{equation}\label{convpointintfunct}
\lim_{h\to\infty}f_h(Xu(x))=\,f(Xu(x))\text{ for a.e. }x\in A\,.
\end{equation}
On the other hand, as
\[
0\le\,f_h(Xu(x))\le\,c_1(1+|Xu(x)|^p)\text{ for a.e. }x\in A,\text{for each }h\,,
\]
by \eqref{convpointintfunct} and the dominated convergence theorem, \eqref{convpointfunctls} follows. We have only to prove that
\begin{equation}\label{FeqtildeF}
F(u,A)=\,\tilde F(u,A)\quad\forall\,A\in\mA,\,\forall\,u\in L^p(\Om)\,
\end{equation}
in order to get our desired conclusion. By \eqref{domGammalimit}, it is  sufficient to prove \eqref{FeqtildeF} for each $A\in\mA$ and for each $u\in W_X^{1,p}(A)$. 
The inequality
\begin{equation}\label{FletildeF}
F(u,A)\le\,\tilde F(u,A)\quad\forall\,A\in\mA,\,\forall\,u\in W_X^{1,p}(A)\,,
\end{equation}
follows by noticing that, for each $u\in W_X^{1,p}(A)$, by  $\Gamma-\liminf$ inequality and \eqref{convpointfunctls}
\[
F(u,A)\le\liminf_{h\to\infty}F^*_h(u,A)=\,\tilde F(u,A)\,.
\]
Let us now prove the opposite inequality
\begin{equation}\label{FgetildeF}
F(u,A)\ge\,\tilde F(u,A)\quad\forall\,A\in\mA,\,\forall\,u\in W_X^{1,p}(A)\,.
\end{equation}

Let us first recall that, for each $A\in\mA$, by \eqref{convpointfunctls} and  Proposition \ref{convpointimplgamma},
\begin{equation}\label{gammaconvtildeF}
\tilde F(u,A)=\,(
\Gamma(W^{1,p}_X(A))-\lim_{h\to\infty}F^*_h)(u)\quad\forall\,u\in W^{1,p}_X(A)\,.
\end{equation}
Fix $A\in\mA$ and let $u\in L^p(\Om)$ with $u|_A\in W^{1,p}_X(A)$. By the  $\Gamma-\lim$ equality, there exists a sequence $(u_h)_h\subset L^p(\Om)$ such that
\begin{equation}\label{convuhLp}
u_h\rightarrow u\text{ in }L^p(\Om)\text{, as }h\to\infty
\end{equation}
and 
\begin{equation}\label{gammalimsupcond}
\lim_{h\to\infty}F_h^*(u_h,A)=\,F(u,A)<\,\infty\,.
\end{equation}
By \eqref{gammalimsupcond}, we can assume that
\begin{equation}\label{uhinWX}
(u_h|_A)_h\subset W^{1,p}_X(A)\,.
\end{equation}
%For each $h$, let us still denote  $u_h$ the function defined on the whole $\Rn$ such that $u_h=\,0$ on $\Rn\setminus\Om$ and let
%\[
%u_{h,\eps}(x)=\,(\rho_\eps*u_h)(x)\text{ if }x\in\Rn\,,
%\]
%where $(\rho_\eps)_\eps$ is the family of mollifiers defined in \eqref{intrmoll} and $\rho_\eps*u_h$ is the convolution defined in \eqref{intrconvol}.  
Let $A'\in\mA$  with $A'\Subset A$. From Proposition \ref{convolconv} (ii),  if $w:=\,\overline{Xu_h}:\,\Rn\to\Rm$, that is, $\overline{Xu_h}=\,Xu_h$ on $A$ and $\overline{Xu_h}=\,0$ outside, for each $0<\,\eps<\,\dist(A',\Rn\setminus A)$
\begin{equation}\label{Jensentypeh}
\begin{split}
\int_{A'}f_h(\rho_\ep\ast \overline{Xu_h})\,dx\le\,\int_{A}f_h( \overline{Xu_h})\,dx\text{ for each }h\,.
\end{split}
\end{equation}
By \eqref{convuhLp}, \eqref{uhinWX} and Proposition \ref{convolconv} (i), for given $0<\,\eps<\,\dist(A',\Rn\setminus A)$,
\begin{equation}\label{convunifXrhoep}
X(\rho_\ep\ast \bar u_h)\rightarrow X(\rho_\ep\ast \bar u)\text{ uniformly on }A'\text{ as }h\to\infty
\end{equation}
and 
\begin{equation}\label{convunifrhoepX}
\rho_\ep\ast \overline{Xu_h}\rightarrow \rho_\ep\ast \overline{Xu}\text{ uniformly on }A'\text{ as }h\to\infty\,.
\end{equation}
In particular,
\begin{equation}\label{convconvol}
\rho_\eps*\bar u_h\rightarrow \rho_\eps*\bar u\text{ in } W^{1,p}_X(A')\text{ as }h\to\infty.
\end{equation}
Observe now that, by \eqref{Jensentypeh}, for each $0<\,\eps<\,\dist(A',\Rn\setminus A)$, for each $h$,
\begin{equation}\label{ineqFasth}
\begin{split}
&F^*_h(\rho_\ep\ast \bar u_h,A')=\,\int_{A'}f_h(X(\rho_\ep\ast \bar u_h))\,dx\\
&=\,\int_{A'}f_h(\rho_\ep\ast \overline{Xu_h})\,dx+\int_{A'}\left(f_h(X(\rho_\ep\ast \bar u_h))-f_h(\rho_\ep\ast \overline{Xu_h})\right)\,dx\\
&\le\,\int_{A}f_h( Xu_h)\,dx+\int_{A'}\left(f_h(X(\rho_\ep\ast \bar u_h))-f_h(\rho_\ep\ast \overline{Xu_h})\right)\,dx\\
&=\,F^*_h(u_h,A)+\,R_{\ep,h}.
\end{split}
\end{equation}
From \eqref{convpointfh}, \eqref{convunifXrhoep} and \eqref{convunifrhoepX}, it follows that, for given $0<\,\eps<\,\dist(A',\Rn\setminus A)$
\begin{equation}\label{limhReph}
\lim_{h\to\infty}R_{\ep,h}=\,R_\ep:=\,\int_{A'}\left(f(X(\rho_\ep\ast \bar u))-f(\rho_\ep\ast \overline{Xu})\right)\,dx\,.
\end{equation}

For given $0<\,\eps<\,\dist(A',\Rn\setminus A)$, by \eqref{gammaconvtildeF}, \eqref{gammalimsupcond}, \eqref{convconvol}, and \eqref{limhReph}, passing to the limit in \eqref{ineqFasth} as $h\to\infty$, it follows that
\begin{equation}\label{ineqtildeFFep}
\begin{split}
\tilde F(\rho_\eps*\bar u,A')&\le\,
\liminf_{h\to\infty}F_h^*(\rho_\eps*\bar u_h,A')\\
&\le\,\lim_{h\to\infty}F_h^*(u_h,A)+\lim_{h\to\infty} R_{\ep,h}=\,F(u,A)+R_\ep.
\end{split}
\end{equation}
Let us now show that
\begin{equation}\label{limepRep}
\lim_{\ep\to 0^+}R_\ep=\,0\,.
\end{equation}
Indeed
\[
X(\rho_\ep\ast \bar u)\rightarrow Xu\text{ and }\rho_\ep\ast \overline{Xu }\rightarrow Xu\text{ in }L^p(A')\text{, as } \ep\to 0^+
\]
and
\[
f(X(\rho_\ep\ast \bar u))\le\,c_1(1+|X(\rho_\ep\ast \bar u)|^p)\text{ and }f(\rho_\ep\ast \overline{Xu})\le\,c_1(1+|\rho_\ep\ast \overline{Xu}|^p)\text{ a.e. in }A'\,.
\]
Since $f$ is continuous, from Vitali's convergence theorem, \eqref{limepRep} follows.
By the semicontinuity of $\tilde F$, with respect to the $L^p$-topology, and by \eqref{limepRep}, we can pass to the limit as $\eps\to 0^+$ in \eqref{ineqtildeFFep} and we get
\begin{equation}\label{ineqtildeFF}
\tilde F(u,A')\le\,\lim_{\eps\to 0^+}\tilde F(\rho_\eps*\bar u,A')\le\,F(u,A)\text{ for each }A'\Subset A\,.
\end{equation}
Finally, taking the supremum in  \eqref{ineqtildeFF} on all $A'\in\mA$ with $A'\Subset A$, we get \eqref{FgetildeF}.
\end{proof}

\end{document}